\newcommand{\N}{\mathbb{N}}
\newcommand{\R}{\mathbb{R}}
\newcommand{\Optime}{\tau_{*}}
\renewcommand{\div}{\, \mathrm{div} \, }
\newcommand{\PP}{\mathcal{P}}
\renewcommand{\AA}{\mathcal{A}}
\newcommand{\CC}{\mathcal{C}}
\newcommand{\BB}{\mathcal{B}}
\newcommand{\Soln}{\mathcal{S}}
\newcommand{\dx}{\, \mathrm{dx}}
\newcommand{\ds}{\, \mathrm{ds}}
\newcommand{\dt}{\, \mathrm{dt}}
\newcommand{\dd}{\, \mathrm{d}}
\newcommand{\dz}{\, \mathrm{dz}}
\newcommand{\dr}{\, \mathrm{dr}}
\newcommand{\der}{\mathrm{D}}
\newcommand{\pd}{\partial}
\newcommand{\pdnu}{\partial_{\bm{\nu}}}
\newcommand{\abs}[1]{\left| #1 \right|}
\newcommand{\norm}[1]{\| #1 \|}
\newcommand{\bignorm}[1]{\left\| #1 \right\|}
\newcommand{\inner}[2]{\langle #1 , #2 \rangle}
\newcommand{\Laplace}{\Delta}
\theoremstyle{plain}
\newtheorem{thm}{Theorem}[section]
\newtheorem{lemma}[thm]{Lemma}
\theoremstyle{plain}
\newtheorem{remark}{Remark}[section]
\newtheorem{assump}{Assumption}[section]
\numberwithin{equation}{section}
\title{Optimal control of treatment time in a diffuse interface model of
tumor growth}
\date{\today}
\author{Harald Garcke \footnotemark[1] \and Kei Fong Lam\footnotemark[1] \and Elisabetta Rocca \footnotemark[2]}
\begin{document}

\maketitle

\renewcommand{\thefootnote}{\fnsymbol{footnote}}
\footnotetext[1]{Fakult\"at f\"ur Mathematik, Universit\"at Regensburg, 93040 Regensburg, Germany
({\tt \{Harald.Garcke, Kei-Fong.Lam\}@mathematik.uni-regensburg.de}).}
\footnotetext[2]{
Universit\`{a} degli Studi di  Pavia, Dipartimento di Matematica, Via Ferrata 5, 27100, Pavia, Italy
({\tt elisabetta.rocca@unipv.it}).}

\begin{abstract}
We consider an optimal control problem for a diffuse interface model of tumor growth.  The state equations couples a Cahn-Hilliard equation and a reaction-diffusion equation, which models the growth of a tumor in the presence of a nutrient and surrounded by host tissue.  The introduction of cytotoxic drugs into the system serves to eliminate the tumor cells and in this setting the concentration of the cytotoxic drugs will act as the control variable.  Furthermore, we allow the objective functional to depend on a free time variable, which represents the unknown treatment time to be optimized.  As a result, we obtain first order necessary optimality conditions for both the cytotoxic concentration and the treatment time. 
\end{abstract}

\noindent \textbf{Key words. } Tumor growth; cancer treatment; phase field model; Cahn--Hilliard equation; reaction-diffusion equation; well-posedness; distributed optimal control; first order necessary optimality conditions; free terminal time. \\

\noindent \textbf{AMS subject classification. } 35K61, 49J20, 49K20, 92C50, 97M60.

\section{Introduction}
There has been a recent surge in the development of phase field models for tumor growth.  These models aim to describe the evolution of a tumor colony surrounded by healthy tissues which experience biological mechanisms such as proliferation via nutrient consumption, apoptosis, chemotaxis and active transport of specific chemical species.  For the case of a young tumor, before the development of quiescent cells, the phase field models often consist of a Cahn--Hilliard equation coupled with a reaction-diffusion equation for the nutrient \cite{article:CristiniLiLowengrubWise09,article:GarckeLamSitkaStyles,article:HawkinsPrudhommevanderZeeOden13,
article:HawkinsZeeOden12,article:OdenHawkinsPrudhomme10}.  One may also treat the tumor cells and the healthy cells as inertia-less fluids, and include the effects of fluid flow into the evolution of the tumor, leading to the development of a Cahn--Hilliard--Darcy system \cite{book:CristiniLowengrub,article:DFRSS,article:GarckeLamSitkaStyles,article:WiseLowengrubFrieboesCristini08}.

Current treatments for cancer include surgery, immunotherapy (strengthening the immune system), radiotherapy (using radiation to kill cancer cells) and chemotherapy (using drugs to kill cancer cells).  The latter three treatments are typically conducted in cycles.  A cycle is a period of treatment followed by a (longer) period of rest, so that the patient's body can build new healthy cells.  The goal of these therapeutic treatments is to shrink the tumor into a more manageable size for which surgery can be applied.  Further therapeutic treatments may be necessary in order to destroy the cancer cells that may remain after the surgery. 

In this work, we consider an optimal control problem involving a cancer treatment with cytotoxic drugs.  It is well-known that while cytotoxic drugs mainly target and damage rapidly dividing cells such as tumor cells, the drugs can also accumulate in the body and cause adverse side-effects to the immune system and various vital organs such as the kidneys and the liver.  In a worst case scenario, too much cytotoxic drugs may allow tumor cells to mutate and become resistant to the treatment.  Thus, from the viewpoint of the patient, the shortest treatment time in which the objectives of the chemotherapy are achieved is the most ideal.   Therefore, the optimal control problem we study involves finding the optimal drug distribution and the optimal treatment time.

For $T > 0$, in a bounded domain $\Omega \subset \R^{3}$ with $C^{3}$-boundary $\Gamma$, we consider the following Cahn--Hilliard model for tumor growth,
\begin{subequations}\label{eq:state}
\begin{alignat}{3}
\pd_{t}\varphi & = \Laplace \mu + (\PP \sigma - \AA - \alpha u) h(\varphi) && \text{ in } \Omega \times (0,T) =: Q, \label{state:varphi} \\
\mu & = A \Psi'(\varphi) - B \Laplace \varphi && \text{ in } Q, \label{state:mu} \\
\pd_{t} \sigma & = \Laplace \sigma - \CC \sigma h(\varphi) + \BB (\sigma_{S} - \sigma) && \text{ in } Q, \label{state:sigma} \\
\pdnu \varphi  &  = \pdnu \mu = \pdnu \sigma  = 0 && \text{ on } \Gamma \times (0,T), \\
\varphi(0) & = \varphi_{0}, \; \sigma(0) = \sigma_{0} && \text{ in } \Omega.
\end{alignat}
\end{subequations}
Here, $\alpha$ is a positive constant, $\varphi$ denotes the difference in volume fraction, where $\varphi = 1$ represents the tumor phase and $\varphi = -1$ represents the healthy tissue phase.  The function $\mu$ is a chemical potential associated to $\varphi$, $\Psi'(\varphi)$ is the derivative of a potential $\Psi(\varphi)$ with equal minima at $\varphi = \pm 1$, $\sigma$ denotes the concentration of an unspecified chemical species acting as nutrient for the tumor cells, while $u$ denotes the concentration of cytotoxic drugs.  

The function $h(\varphi)$ is an interpolation function such that $h(-1) = 0$ and $h(1) = 1$, and the parameters $\PP$, $\AA$, $\CC$, and $\BB$ denote the constant proliferation rate, apoptosis rate, nutrient consumption rate, and nutrient supply rate, respectively.  The positive constants $A$ and $B$ are related to the thickness of the interfacial layer and the surface tension, while $\pdnu f = \nabla f \cdot \bm{\nu}$ denotes the normal derivative of $f$ where $\bm{\nu}$ is the unit outward normal of $\Gamma$.  

The term $h(\varphi) \PP \sigma$ models the proliferation of tumor cells which is proportional to the concentration of the nutrient, the term $h(\varphi) \AA$ models the apoptosis of tumor cells, and $\CC h(\varphi) \sigma$ models the consumption of the nutrient only by the tumor cells.  The term $\alpha u h(\varphi)$ models the elimination of the tumor cells by the cytotoxic drugs at a constant rate $\alpha$.  Meanwhile, $\sigma_{S}$ denotes the nutrient concentration in a pre-existing vasculature, and $\BB(\sigma_{S} - \sigma)$ models the supply of nutrient from the blood vessels if $\sigma_{S} > \sigma$ and the transport of nutrient away from the domain $\Omega$ if $\sigma_{S} < \sigma$.

In comparison with the models of \cite{article:GarckeLamSitkaStyles}, we have neglected the effects of chemotaxis and active transport, but the new feature of \eqref{eq:state} is the inclusion of the effects of cytotoxic drugs via the term $\alpha u h(\varphi)$, and in this work the function $u$ will act as our control.  For realistic applications the control $u:[0,T] \to [0,1]$ should be spatially constant, where $u = 1$ represents a full dosage and $u = 0$ represents no dosage.  However, in the subsequent analysis, we allow for spatial dependence (see Assumption \ref{assump:main} below). 

For positive constants $r$, $\beta_{u}$ and $\beta_{T}$, and nonnegative constants $\beta_{Q}$, $\beta_{\Omega}$, and $\beta_{S}$, we consider the objective functional $J_{r}$ given as
\begin{equation}\label{eq:cost}
\begin{aligned}
J_{r}(\varphi, u, \tau) & = \frac{\beta_{Q}}{2} \int_{0}^{\tau} \int_{\Omega} \abs{\varphi - \varphi_{Q}}^{2} \dx \dt + \frac{\beta_{\Omega}}{2} \frac{1}{r} \int_{\tau-r}^{\tau} \int_{\Omega} \abs{\varphi - \varphi_{\Omega}}^{2} \dx  \dt \\
& + \frac{\beta_{S}}{2} \frac{1}{r} \int_{\tau-r}^{\tau} \int_{\Omega} 1 + \varphi \dx \dt + \frac{\beta_{u}}{2} \int_{0}^{T} \int_{\Omega} \abs{u}^{2} \dx \dt + \beta_{T} \tau.
\end{aligned}
\end{equation}
In particular, \eqref{eq:cost} can be seen as the relaxation of the following more natural objective functional
\begin{equation}\label{eq:originalcost}
\begin{aligned}
J(\varphi, u, \tau) & = \frac{\beta_{Q}}{2} \int_{0}^{\tau} \int_{\Omega} \abs{\varphi - \varphi_{Q}}^{2} \dx \dt + \frac{\beta_{\Omega}}{2} \int_{\Omega} \abs{\varphi(\tau) - \varphi_{\Omega}}^{2} \dx  \\
& + \frac{\beta_{S}}{2} \int_{\Omega} 1 + \varphi(\tau) \dx + \frac{\beta_{u}}{2} \int_{0}^{\tau} \int_{\Omega} \abs{u}^{2} \dx \dt + \beta_{T} \tau.
\end{aligned}
\end{equation}
Here, $\tau \in (0,T]$ represents the treatment time, $\varphi_{Q}$ represents a desired evolution for the tumor cells while $\varphi_{\Omega}$ represents a desired final distribution.  The first two terms of $J$ are of standard tracking type as often considered in the literature of parabolic optimal control, and the third term of $J$ measures the size of the tumor at the end of the treatment.  The fourth term penalizes large concentrations of the cytotoxic drugs, and the fifth term of $J$ penalizes long treatment times.  

Let us make the following comments:
\begin{enumerate}
\item A large value of $\abs{\varphi - \varphi_{Q}}^{2}$ would mean that the patient suffers from the growth of the tumor, and a large value of $\abs{u}^{2}$ would mean that the patient suffers from high toxicity of the drug.  
\item The function $\varphi_{\Omega}$ can be a stable configuration of the system, so that the tumor does not grow again once the treatment is completed.  One can also choose $\varphi_{\Omega}$ as a configuration which is suitable for surgery.
\item The variable $\tau$ can be regarded as the treatment time of one cycle, i.e., the amount of time the drug is applied to the patient before the period of rest, or the treatment time before surgery.
\item It is possible to replace $\beta_{T} \tau$ by a more general function $f(\tau)$ where $f: \R_{\geq 0} \to \R_{\geq 0}$ is continuously differentiable and increasing.  
\item We consider $T \in (0,\infty)$ as a fixed maximal time in which the patient is allowed to undergo a treatment obtained from this optimal control problem.
\end{enumerate} 

For technical reasons highlighted below, we consider an optimal control problem with the relaxed objective functional \eqref{eq:cost} and the state equations \eqref{eq:state}.  We denote the space of admissible controls as $\mathcal{U}_{\mathrm{ad}}$ (see Assumption \ref{assump:main} below) and the optimal control problem we study in this work can be expressed as follows,
\begin{equation}\tag{$\mathrm{P}$}\label{OCProblem}
\begin{aligned}
\text{ minimise } J_{r}(\varphi, u, \tau) \text{ subject to } \eqref{eq:state}, \; u \in \mathcal{U}_{\mathrm{ad}}, \; \tau \in (0,T).
\end{aligned}
\end{equation}
The optimal control problem \eqref{OCProblem} is a problem involving a free terminal time, and we say that $(u_{*}, \Optime)$ is a minimizer of \eqref{OCProblem} if
\begin{align*}
J_{r}(\varphi_{*}, u_{*}, \Optime) & = \inf J_{r}(\phi, w, s),
\end{align*}
where the infimum is taken over triplets $(\phi, w, s)$ such that $w \in \mathcal{U}_{\mathrm{ad}}$, $s \in [0,T]$ and $\phi$ solves \eqref{eq:state} with datum $w$.  In ODE constrained optimal control where the cost functional depends on the free terminal time, the necessary optimality condition can be derived with the help of the corresponding Hamiltonian function, see for instance \cite[Chapter 20]{book:LenhartWorkman} and \cite{article:HartlSethi,article:JangKwonLee,article:PalankiKravarisWang}.  One may use the notion of Hamiltonian functional to derive the optimality condition for the free terminal time when the state equations are partial differential equations, see in particular \cite{article:AradaRaymond,article:RaymondZidani,article:RaymondZidani2} for semilinear parabolic state equations. 

Below we illustrate with an example the optimality conditions obtained with the Hamiltonian from ODE theory and with the Lagrangian method for PDE-constrainted optimization, see for instance \cite[\S 2.10]{book:Troeltzsch} and \cite[\S 1.6.4]{book:Hinze}.  Suppose the objective functional is of the form
\begin{align*}
\int_{0}^{\tau} \int_{\Omega} F(t, \varphi(t,x), u(t,x)) \dx \dt + \int_{\Omega} L(\tau, \varphi(\tau,x)) \dx,
\end{align*}
and $\varphi$ satisfies for example
\begin{align*}
\pd_{t}\varphi = \Laplace \varphi + f(t,\varphi, u).
\end{align*}
Let $u_{*}$ denote an optimal control with corresponding state $\varphi_{*}$.  The Hamiltonian $H$ is defined as
\begin{align*}
H(t,\varphi, u, p) := \int_{\Omega} F(t,\varphi, u) \dx + \int_{\Omega} p \Laplace \varphi + p f(t,\varphi, u) \dx,
\end{align*}
where $p$ act as the adjoint variable to $\varphi_{*}$.  From the works of \cite{article:AradaRaymond,article:RaymondZidani,article:RaymondZidani2} and also from the theory of ODE-constraint optimal control, the optimality condition for the optimal time $\Optime$ is
\begin{equation}\label{HamiltonianOC}
\begin{aligned}
0 & = H(\Optime, \varphi_{*}(\Optime), u_{*}(\Optime), p(\Optime)) + \int_{\Omega} \frac{\pd L}{\pd t}(\Optime, \varphi_{*}(\Optime)) \dx.
\end{aligned}
\end{equation}
Now, let us define the Lagrangian
\begin{align*}
\mathcal{L} := \int_{0}^{\tau} \int_{\Omega} F(t,\varphi, u) \dx + \int_{\Omega} L(\tau,\varphi(\tau)) \dx - \int_{0}^{\tau} \int_{\Omega} p \left ( \pd_{t}\varphi  - \Laplace \varphi - f(t,\varphi, u) \right ) \dx,
\end{align*}
then one obtains from formally differentiating $\mathcal{L}$ with respect to $\tau$ the optimality condition for $\Optime$, which is
\begin{align*}
\frac{\pd \mathcal{L}}{\pd \tau}(\Optime, \varphi_{*}, u_{*}) & = \int_{\Omega} F(\Optime, \varphi_{*}(\Optime), u_{*}(\Optime)) +  \frac{\pd L}{\pd t}(\Optime, \varphi_{*}(\Optime)) + \frac{\pd L}{\pd \varphi}(\Optime, \varphi_{*}(\Optime)) \pd_{t}\varphi_{*}(\Optime) \dx \\
& - \int_{\Omega} p(\Optime) (\pd_{t}\varphi_{*}(\Optime) - \Laplace \varphi_{*}(\Optime) - f(\Optime, \varphi_{*}(\Optime), u_{*}(\Optime))) \dx = 0.
\end{align*}
The adjoint equation for $p$ is a terminal time boundary value problem:
\begin{align*}
-\pd_{t}p = \Laplace p + \frac{\pd f}{\pd \varphi}p + \frac{\pd F}{\pd \varphi}, \quad p(\Optime) = \frac{\pd L}{\pd \varphi}(\Optime, \varphi_{*}(\Optime)).
\end{align*}
Using the terminal condition for $p$ in the expression for $\frac{\pd \mathcal{L}}{\pd \tau}(\Optime, \varphi_{*}, u_{*})$ we see that 
\begin{align*}
\frac{\pd L}{\pd \varphi}(\Optime,\varphi_{*}(\Optime)) \pd_{t}\varphi_{*}(\Optime) - p(\Optime) \pd_{t}\varphi_{*}(\Optime) = 0,
\end{align*}
and thus $\frac{\pd \mathcal{L}}{\pd \tau}(\Optime, \varphi_{*}, u_{*}) = 0$ is equivalent to \eqref{HamiltonianOC}.  That is, the optimality conditions for the free terminal time obtain from the Hamiltonian formulation and the Lagrangian formulation coincide.

Let us briefly explain the issues with the objective functional \eqref{eq:originalcost}.  Formally differentiating \eqref{eq:originalcost} with respect to $\tau$, we obtain
\begin{equation}\label{formalpdJpdtau}
\begin{aligned}
\frac{\pd J}{\pd \tau}(\varphi_{*}, u_{*}, \Optime) &= \frac{\beta_{Q}}{2} \int_{\Omega} \abs{\varphi_{*}(\Optime) - \varphi_{Q}(\Optime)}^{2} \dx + \beta_{\Omega} \int_{\Omega} (\varphi_{*}(\Optime) - \varphi_{\Omega})\pd_{t}\varphi_{*}(\Optime) \dx \\
& + \frac{\beta_{S}}{2} \int_{\Omega} \pd_{t}\varphi_{*}(\Optime) \dx + \frac{\beta_{u}}{2} \int_{\Omega} \abs{u_{*}(\Optime)}^{2} \dx + \beta_{T},
\end{aligned}
\end{equation}
and in order for the terms in \eqref{formalpdJpdtau} to be well-defined, we need that 
\begin{align*}
u_{*}, \varphi_{Q}, \varphi_{*}, \pd_{t}\varphi_{*} \in C^{0}([0,T];L^{2}(\Omega)). 
\end{align*}
Furthermore, to rigorously establish the Fr\'{e}chet differentiability of $J$ with respect to $\tau$, it turns out that we require 
\begin{align*}
\pd_{tt}\varphi_{*} \in L^{2}(0,T;L^{2}(\Omega)), \text{ and } u_{*}, \varphi_{Q} \in H^{1}(0,T;L^{2}(\Omega)).
\end{align*}
Thus, the main mathematical difficulties arise from establishing high temporal regularity for the state variables.  A preliminary analysis shows that it is possible to derive such regularity but only under rather strong assumptions such as $\varphi_{0} \in H^{5}(\Omega)$, $\sigma_{0} \in H^{3}(\Omega)$ and $\norm{\pd_{t}u}_{L^{2}(0,T;L^{2}(\Omega))} \leq K$ for some fixed $K > 0$.  The assumption on the a priori boundedness of $\pd_{t}u$ is not meaningful as in applications it will be hard to verify this condition.  Furthermore, using the Lagrangian method, one can compute that the terminal condition for the adjoint variable $p$ to $\varphi_{*}$ is $p(\Optime) = \beta_{\Omega}(\varphi_{*}(\Optime) - \varphi_{\Omega}) + \frac{\beta_{S}}{2}$, and so we can write \eqref{formalpdJpdtau} more compactly as
\begin{align*}
\frac{\pd J}{\pd \tau}(\varphi_{*}, u_{*},\Optime) =  \int_{\Omega} \frac{\beta_{Q}}{2}\abs{\varphi_{*}(\Optime) - \varphi_{Q}(\Optime)}^{2} + \frac{\beta_{u}}{2} \abs{u_{*}(\Optime)}^{2} \dx + \int_{\Omega} p(\Optime) \pd_{t}\varphi(\Optime) \dx + \beta_{T}.
\end{align*}
But this would mean that we require the weak formulation for the equation of $\varphi_{*}$ to be satisfied pointwise in $[0,T]$, that is,
\begin{align*}
\int_{\Omega} p(t) \pd_{t}\varphi_{*}(t) + \nabla \mu_{*}(t) \cdot \nabla p(t) - h(\varphi_{*}(t))(\PP \sigma_{*}(t) - \AA - \alpha u_{*}(t)) p(t) \dx = 0
\end{align*}
holds for all $t \in [0,T]$.  This in turn implies that we need 
\begin{align*}
p \in C^{0}([0,T];H^{1}(\Omega)), \; \pd_{t}\varphi_{*} \in C^{0}([0,T];L^{2}(\Omega)), \; \mu_{*} \in C^{0}([0,T];H^{1}(\Omega)).
\end{align*}
These difficulties motivates the current study with the relaxed objective functional $J_{r}$ \eqref{eq:cost}.

There have been many recent contributions regarding the well-posedness and asymptotic behaviour for phase field type tumor models, see for example \cite{article:ColliGilardiHilhorst15,article:ColliGilardiRoccaSprekels1,article:ColliGilardiRoccaSprekels2,article:FrigeriGrasselliRocca15,article:GarckeLamNeumann,article:GarckeLamDirichlet}
for the Cahn--Hilliard variant, and \cite{preprint:BosiaContiGrasselli14,article:DFRSS,article:GarckeLamCHND,preprint:JiangWuZheng14,article:LowengrubTitiZhao13} for the Cahn--Hilliard--Darcy variant.  From the aspect of optimal control, we mention the works of \cite{article:ColliFarshbafGilardiSprekels,article:ColliFarshbafGilardiSprekels2ndorder,article:ColliGilardiSprekels,article:ColliGilardiSprekelsViscous,article:HintermullerWegner12,article:ZhaoLiuViscous1D} for the Cahn--Hilliard equation, \cite{article:RoccaSprekels,article:ZhaoDuan6thorderCH,article:ZhaoLiuConvec1D,article:ZhaoLiuConvec2D} for the convective Cahn--Hilliard equation and \cite{article:FrigeriRoccaSprekels,article:HintermullerKeilWegner,article:HintermullerWegnerGL,article:HintermullerWegnerCHNS} for the Cahn--Hilliard--Navier--Stokes system.  In the context of PDE constraint optimal control for diffuse interface tumor models, we have the recent work of \cite{article:CGRS}, where the objective functional \eqref{eq:originalcost} with $\beta_{S} = \beta_{T} = 0$ and no dependence of on $\tau$ is studied with state equations given by the model of \cite{article:HawkinsZeeOden12} and the control enters the nutrient equation as a source term, similar to the term $\BB (\sigma_{S} - \sigma)$ in \eqref{state:sigma}.  With this work we aim to provide a contribution to the theory of free terminal time optimal control in the context of diffuse interface tumor models.

Let us provide some future directions of research motivated by this study:
\begin{enumerate}
\item An optimal control $u$ that is periodic in time, reflecting the cyclic nature of therapeutic treatments.
\item A feedback mechanism taking into account the patient's response to the therapy, and the  tumor's resistance to the drug.
\item Analysis and identification of stable equilibria for diffuse interface models of tumor growth.
\end{enumerate}

\paragraph{Plan of the paper.} The paper is organised as follows.  In Section \ref{sec:mainresults} the general assumptions are outlined and the main results are stated.  The well-posedness of the state equations \eqref{eq:state} is established in Section \ref{sec:stateequations}.  The existence of a minimizer to \eqref{OCProblem} is proved in Section \ref{sec:minimizer}, while the unique solvability of the linearized state equations and the Fr\'{e}chet differentiability of the control-to-state mapping and of the functional $J_{r}$ are contained in Section \ref{sec:Frechetdiff}.  In Section \ref{sec:FONC}, the unique solvability of the adjoint equations is studied and the first order necessary optimality conditions are derived. 

\section{General assumptions and main results}\label{sec:mainresults}

\paragraph{Notation.}
For convenience, we will often use the notation $L^{p} := L^{p}(\Omega)$ and $W^{k,p} := W^{k,p}(\Omega)$ for any $p \in [1,\infty]$, $k > 0$ to denote the standard Lebesgue spaces and Sobolev spaces equipped with the norms $\norm{\cdot}_{L^{p}}$ and $\norm{\cdot}_{W^{k,p}}$.  In the case $p = 2$ we use $H^{k} := W^{k,2}$ and the norm $\norm{\cdot}_{H^{k}}$.  Moreover, the dual space of a Banach space $X$ will be denoted by $X^{*}$, and the duality pairing between $X$ and $X^{*}$ is denoted by $\inner{\cdot}{\cdot}_{X}$.  The space-time cylinder $\Omega \times (0,T)$ will be denoted by $Q$, and we use the notation $L^{p}(Q)$ to denote the spaces $L^{p}(\Omega \times (0,T))$ for $1 \leq p \leq \infty$.  Using Fubini's theorem we have the isometric isomorphism $L^{p}(0,T;L^{p}) \cong L^{p}(Q)$ for $p \in [1,\infty)$.  We point out that $L^{\infty}(0,T;L^{\infty}) \subset L^{\infty}(Q)$, but the converse inclusion is not true in general due to measurability issues (see for instance \cite[Example 1.4.2]{book:Roubieck}).

\paragraph{Useful preliminaries.}
The following Gronwall inequality in integral form will often be used (see \cite[Lemma 3.1]{article:GarckeLamNeumann} for a proof).  For $W$, $X$, $Y$, $Z$ real-valued functions defined on $[0,T]$ such that $W$ is integrable, $X$ is nonnegative and continuous, $Y$ is continuous, $Z$ is nonnegative and integrable.  If $Y$ and $Z$ satisfy the integral inequality
\begin{align*}
Y(s) + \int_{0}^{s} Z(t) \dt \leq W(s) + \int_{0}^{s} X(t) Y(t) \dt \text{ for } s \in (0,T],
\end{align*}
then it holds that
\begin{align}\label{Gronwall}
Y(s) + \int_{0}^{s} Z(t) \dt \leq W(s) + \int_{0}^{s} X(t) W(t) \exp \left ( \int_{0}^{t} X(r) \dr \right ) \dt.
\end{align}

The following Taylor's theorem with integral remainder will be used to show the Fr\'{e}chet differentiability of the control-to-state mapping.  For $f \in C^{2}(\R)$ and $a,x \in \R$, it holds that
\begin{align}\label{Taylor}
f(x) = f(a) + f'(a)(x-a) + (x-a)^{2} \int_{0}^{1} f''(a + z(x-a)) (1-z) \dz.
\end{align}

The Gagliardo--Nirenberg interpolation inequality in dimension $d$ is also useful (see \cite[Theorem 10.1, p. 27]{book:Friedman}):  Let $\Omega$ be a bounded domain with $C^{m}$ boundary, and $f \in W^{m,r}(\Omega) \cap L^{q}(\Omega)$, $1 \leq q,r \leq \infty$.  For any integer $j$, $0 \leq j < m$, suppose there is an $\alpha \in \R$ such that
\begin{align*}
\frac{1}{p} = \frac{j}{d} + \left ( \frac{1}{r} - \frac{m}{d} \right ) \alpha + \frac{1-\alpha}{q}, \quad \frac{j}{m} \leq \alpha \leq 1.
\end{align*}
There exists a positive constant $C$ depending only on $\Omega$, $m$, $j$, $q$, $r$, and $\alpha$ such that
\begin{align}
\label{GagNirenIneq}
\norm{D^{j} f}_{L^{p}(\Omega)} \leq C \norm{f}_{W^{m,r}(\Omega)}^{\alpha} \norm{f}_{L^{q}(\Omega)}^{1-\alpha} .
\end{align}

We consider the following assumptions.
\begin{assump}\label{assump:main}
\
\begin{itemize}
\item[$(\mathrm{A1})$] The initial conditions satisfy $\varphi_{0} \in H^{3}$ with compatibility condition $\pdnu \varphi_{0} = 0$ on $\Gamma$, $\sigma_{0} \in H^{1}$, with $0 \leq \sigma_{0} \leq 1$ a.e. in $\Omega$, while the target functions satisfy $\varphi_{Q}, \varphi_{\Omega} \in L^{2}(Q)$.  The vasculature nutrient concentration $\sigma_{S}$ satisfies $0 \leq \sigma_{S} \leq 1$ a.e. in $Q$.
\item[$(\mathrm{A2})$] The interpolation function $h : \R \to [0,1]$ is twice continuously differentiable and Lipschitz continuous (with Lipschitz constant $L_{h}$).  The parameters $\PP$, $\AA$, $\CC$, and $\BB$ are nonnegative constants, and $\alpha$ is a positive constant.  
\item[$(\mathrm{A3})$] The space of admissible controls is given as 
\begin{align*}
\mathcal{U}_{\mathrm{ad}} = \left \{ u \in L^{\infty}(0,T;L^{\infty}) : 0 \leq u \leq 1 \text{ a.e. in } Q \right \}.
\end{align*}
\item[$(\mathrm{A4})$] The potential $\Psi: \R \to \R_{\geq 0}$ is three times continuously differentiable and satisfies for some positive constants $\{k_{j}\}_{j=0}^{5}$,
\begin{align}
\abs{\Psi'(s)} & \leq k_{0} \Psi(s) + k_{1},\label{Psi'Psi}\\
\Psi(s) & \geq k_{2} \abs{s} - k_{3}, \label{PsiLower} \\
\abs{\Psi''(s)} & \leq k_{4}(1 + \abs{s}^{2}), \label{Psi:growth} \\
\abs{\Psi'(s) - \Psi'(t)} & \leq k_{5}(1 + \abs{s}^{2} + \abs{t}^{2})\abs{s - t}, \label{Psi'diff}
\end{align}
for all $s, t \in \R$.
\end{itemize}
\end{assump}
We point out that as $\Omega$ is a bounded domain, there exists an open set $\mathcal{U} \subset L^{2}(Q)$ such that $\mathcal{U}_{\mathrm{ad}} \subset \mathcal{U}$.  In this work, we consider quartic potentials $\Psi$ for the state equations, for which the classical double-well potential $\Psi(s) = \frac{1}{4}(1-s^{2})^{2}$ is one example.  The well-posedness of the state equations with higher polynomial growth for $\Psi$ is also possible, see for instance the procedure in \cite[Proof of Theorem 1]{article:FrigeriGrasselliRocca15}, but we restrict our current analysis to that of quartic potentials to simplify the computations. 

\begin{thm}[Well-posedness]\label{thm:state}
For every $T \in (0,\infty)$ and given data $(\varphi_{0}, \sigma_{0}, u)$, under Assumption \ref{assump:main} there exists a unique triplet of solutions $(\varphi, \mu, \sigma)$ with
\begin{align*}
\varphi & \in L^{\infty}(0,T;H^{2}) \cap L^{2}(0,T;H^{3}) \cap H^{1}(0,T;L^{2}) \cap C^{0}(\overline{Q}), \\
\mu & \in L^{2}(0,T;H^{2}) \cap L^{\infty}(0,T;L^{2}), \\
\sigma & \in L^{\infty}(0,T;H^{1}) \cap L^{2}(0,T;H^{2}) \cap H^{1}(0,T;L^{2}), \quad 0 \leq \sigma \leq 1 \text{ a.e. in } Q,
\end{align*}
such that $\varphi(0) = \varphi_{0}$, $\sigma(0) = \sigma_{0}$, and for a.e. $t \in (0,T)$ and for all $\zeta \in H^{1}$,
\begin{subequations}\label{weak:state}
\begin{align}
0 & = \int_{\Omega} \pd_{t}\varphi \zeta + \nabla \mu \cdot \nabla \zeta - (\PP \sigma - \AA  - \alpha u) h(\varphi) \zeta \dx, \label{weak:varphi} \\
0 & = \int_{\Omega} \mu \zeta - A \Psi'(\varphi) \zeta - B \nabla \varphi \cdot \nabla \zeta \dx, \label{weak:mu} \\
0 & = \int_{\Omega} \pd_{t} \sigma \zeta + \nabla \sigma \cdot \nabla \zeta + (\CC h(\varphi) + \BB)\sigma \zeta - \BB \sigma_{S} \zeta \dx. \label{weak:sigma}
\end{align}
\end{subequations}
Furthermore, it holds that
\begin{align*}
& \norm{\varphi}_{L^{\infty}(0,T;H^{2}) \cap L^{2}(0,T;H^{3}) \cap H^{1}(0,T;L^{2})} + \norm{\mu}_{L^{2}(0,T;H^{2}) \cap L^{\infty}(0,T;L^{2})} \\
& \quad  + \norm{\sigma}_{L^{2}(0,T;H^{2}) \cap L^{\infty}(0,T;H^{1}) \cap H^{1}(0,T;L^{2})} \leq \overline{C},
\end{align*}  
for some positive constant $\overline{C}$ not depending on $(\varphi, \mu, \sigma, u)$.  Let $(\varphi_{i}, \mu_{i}, \sigma_{i})_{i = 1,2}$ denote two weak solutions to \eqref{eq:state} satisfying \eqref{weak:state} corresponding to $\{u_{i}\}_{i=1,2}$ with the same initial data $\varphi_{0}$ and $\sigma_{0}$.  Then, there exists a positive constant $C_{\mathrm{cts}}$ depending only on $\norm{\varphi_{i}}_{L^{\infty}(0,T;L^{\infty})}$, $A$, $B$, $\PP$, $\AA$, $\CC$, $\alpha$, $k_{5}$, $T$ and the Lipschitz constant $L_{h}$, such that for all $s \in (0,T]$,
\begin{equation}\label{solution:ctsdep:s}
\begin{aligned}
& \left ( \norm{\varphi_{1}(s) - \varphi_{2}(s)}_{L^{2}}^{2} + \norm{\sigma_{1}(s) - \sigma_{2}(s)}_{H^{1}}^{2} \right ) + \norm{\mu_{1} - \mu_{2}}_{L^{2}(0,s;L^{2})}^{2} \\
& \quad + \norm{\pd_{t}\sigma_{1} - \pd_{t}\sigma_{2}}_{L^{2}(0,s;L^{2})}^{2} + \norm{\varphi_{1} - \varphi_{2}}_{L^{2}(0,s;H^{2})}^{2} 
\leq C_{\mathrm{cts}} \norm{u_{1} - u_{2}}_{L^{2}(0,s;L^{2})}^{2}.
\end{aligned}
\end{equation}
\end{thm}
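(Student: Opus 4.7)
The plan is a standard Galerkin + a priori estimates + Gronwall scheme, closed by a Cauchy-estimate argument for the continuous dependence. I would construct a finite-dimensional approximation $(\varphi^n, \mu^n, \sigma^n)$ using the eigenbasis $\{w_k\}$ of the Neumann Laplacian, for which local existence of the resulting ODE system is immediate and extension to $[0,T]$ follows from the a priori estimates below. Passing to the limit $n \to \infty$ via Aubin--Lions compactness, together with the continuity of $\Psi'$ and $h$, will yield a weak solution satisfying \eqref{weak:state}; uniqueness then follows from the continuous dependence estimate.

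The uniform-in-$n$ bounds come in two layers. For the basic energy estimate, I would test \eqref{weak:varphi} with $\mu^n$ and \eqref{weak:mu} with $\pd_t \varphi^n$ and subtract; using \eqref{Psi'Psi}--\eqref{PsiLower}, this controls $\varphi^n$ in $L^\infty(0,T;H^1)$ and $\nabla \mu^n$ in $L^2(0,T;L^2)$, with the spatial mean of $\mu^n$ handled via the constant test function in \eqref{weak:mu} combined with \eqref{Psi'Psi} and Poincar\'{e}. Testing \eqref{weak:sigma} with $\sigma^n$ gives $L^\infty(L^2) \cap L^2(H^1)$ regularity, while a truncation argument at the limit level delivers $0 \leq \sigma \leq 1$ a.e.\ using $0 \leq \sigma_S, \sigma_0 \leq 1$. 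The Gronwall inequality \eqref{Gronwall} closes each step, and the estimates are uniform in the control $u$ since $\mathcal{U}_{\mathrm{ad}}$ is bounded in $L^\infty(Q)$.

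For the higher regularity, I would exploit the elliptic identity $-B \Laplace \varphi^n = \mu^n - A \Psi'(\varphi^n)$ obtained from \eqref{weak:mu}. Testing \eqref{weak:varphi} with $-\Laplace \varphi^n$ and combining with $H^2$-elliptic regularity for the Neumann problem gives $\varphi^n \in L^\infty(H^2)$ and $\mu^n \in L^\infty(L^2)$, where the cubic growth \eqref{Psi:growth} of $\Psi'$ is absorbed via the three-dimensional embedding $H^2 \hookrightarrow L^\infty$. A further elliptic bootstrap in $H^3$ then delivers $\varphi^n \in L^2(H^3)$ and $\mu^n \in L^2(H^2)$, after which $\pd_t \varphi^n \in L^2(L^2)$ follows directly from \eqref{state:varphi}. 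Testing \eqref{weak:sigma} with $-\Laplace \sigma^n$ and with $\pd_t \sigma^n$ produces the remaining regularity for $\sigma$. Continuity $\varphi \in C^0(\overline Q)$ follows from $\varphi \in L^\infty(H^2) \cap H^1(L^2)$ by Aubin--Lions interpolation into $C^0([0,T];H^{2-\eps})$ together with $H^{2-\eps} \hookrightarrow C^0(\overline \Omega)$ for $\eps < 1/2$ in 3D.

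Finally, to establish \eqref{solution:ctsdep:s} I would set $(\bar \varphi, \bar \mu, \bar \sigma) = (\varphi_1 - \varphi_2, \mu_1 - \mu_2, \sigma_1 - \sigma_2)$ and test the differences of \eqref{weak:varphi} and \eqref{weak:mu} against $\bar \mu$ and $\pd_t \bar \varphi$ respectively (with a mean-value decomposition to recover a Poincar\'{e}-type bound on $\bar \mu$), and the $\sigma$-difference against $\bar \sigma$ and $\pd_t \bar \sigma$. The nonlinear increments $\Psi'(\varphi_1) - \Psi'(\varphi_2)$ and $h(\varphi_1) - h(\varphi_2)$ are controlled via \eqref{Psi'diff}, the Lipschitz constant $L_h$, and the $L^\infty$-bound on $\varphi_i$ inherited from the previous step, while the $u$-dependence enters linearly through the term $\alpha (u_1 - u_2) h(\varphi_2)$; a final application of \eqref{Gronwall} yields \eqref{solution:ctsdep:s}. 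The main obstacle is the simultaneous $L^\infty(H^2) \cap L^2(H^3)$ regularity of $\varphi$ with $L^\infty(L^2) \cap L^2(H^2)$ for $\mu$ in three dimensions with a cubic $\Psi'$: the nonlinearity $A \Psi'(\varphi)$ is only a priori in $L^\infty(L^2)$, so the bootstrap from $H^1$ through $H^2$ to $H^3$ must be carried out carefully at the Galerkin level, leaning on the embedding $H^2 \hookrightarrow L^\infty(\Omega)$, which is precisely what controls $\Psi'(\varphi)$ pointwise and unlocks all subsequent estimates.
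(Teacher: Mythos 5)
Your overall architecture differs from the paper's in a way that matters: the paper does \emph{not} run a Galerkin scheme on the coupled system, but decouples it via two auxiliary problems and Schauder's fixed point theorem, precisely because the weak comparison principle giving $0\le\sigma\le1$ cannot be performed at the Galerkin level (projections do not preserve sign; see the remark after Lemma \ref{lem:auxProb1:nutrient}), and the pointwise bound on the nutrient is then used inside the Cahn--Hilliard estimates. Your plan to recover $0\le\sigma\le1$ only ``at the limit level'' is legitimate for the final statement, but it is unavailable while you derive the uniform-in-$n$ bounds, so you would have to check that every estimate for $(\varphi^{n},\mu^{n})$ closes with only $\|\sigma^{n}\|_{L^{\infty}(0,T;L^{2})}$ in hand; you never address this. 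More seriously, your higher-regularity step is not correct as described: testing \eqref{weak:varphi} with $-\Laplace\varphi^{n}$ yields control of $\|\nabla\varphi^{n}\|_{L^{2}}$ and an $L^{2}(0,T;H^{3})$-type quantity, but it cannot produce $\varphi\in L^{\infty}(0,T;H^{2})$ and $\mu\in L^{\infty}(0,T;L^{2})$, and $\pd_{t}\varphi\in L^{2}(Q)$ does not follow ``directly from the equation'' because $\Laplace\mu^{n}$ is not yet controlled in $L^{2}(Q)$. The missing idea is the paper's third estimate: differentiate the relation $\mu^{n}=A\Pi_{n}(\Psi'(\varphi^{n}))-B\Laplace\varphi^{n}$ in time (cf.\ \eqref{auxProb2:strong:pdtmun}), test it with $\mu^{n}$ and the $\varphi^{n}$-equation with $B\pd_{t}\varphi^{n}$, and control the extra initial datum $\mu^{n}(0)=\Pi_{n}(A\Psi'(\varphi_{0})-B\Laplace\varphi_{0})$ in $L^{2}$ --- this is exactly where $\varphi_{0}\in H^{3}$ and $\pdnu\varphi_{0}=0$ enter, assumptions your outline never uses. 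Your stated reliance on $H^{2}\hookrightarrow L^{\infty}$ to tame $\Psi'$ is circular at the point where the $H^{2}$ bound is being derived; the paper instead works with $H^{1}\subset L^{6}$ and the Gagliardo--Nirenberg inequality \eqref{GagNirenIneq}, for which the growth condition \eqref{Psi:growth} suffices.

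The continuous-dependence step also has a gap. Pairing the difference of \eqref{weak:varphi} with $\bar\mu$ and the difference of \eqref{weak:mu} with $\pd_{t}\bar\varphi$ produces the term $A\int_{\Omega}(\Psi'(\varphi_{1})-\Psi'(\varphi_{2}))\,\pd_{t}\bar\varphi\dx$, which is not a time derivative (since $\Psi$ is nonconvex) and cannot be absorbed by the quantities your test functions generate; moreover this pairing gives only $\nabla\bar\mu$ in $L^{2}$, whereas \eqref{solution:ctsdep:s} needs the full $\|\bar\mu\|_{L^{2}(0,s;L^{2})}$ and $\|\bar\varphi\|_{L^{2}(0,s;H^{2})}$. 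The paper's choice is simpler and does work: test the $\varphi$-difference equation with $B\bar\varphi$ and the $\mu$-difference equation with $\bar\mu$ (the gradient cross terms cancel), estimate $\Psi'(\varphi_{1})-\Psi'(\varphi_{2})$ via \eqref{Psi'diff} and the $L^{\infty}(\overline{Q})$ bounds on $\varphi_{i}$, apply \eqref{Gronwall}, and then recover the $H^{2}$-norm of $\bar\varphi$ by elliptic regularity from the $\mu$-difference equation; the $\sigma$-components come from the separate nutrient difference estimates. If you insist on your pairing, you would have to pass to the $(H^{1})^{*}$ duality for $\pd_{t}\bar\varphi$ and control $\Psi'(\varphi_{1})-\Psi'(\varphi_{2})$ in $H^{1}$, a substantially heavier argument than the theorem requires.
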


The existence of solutions to the state equations \eqref{eq:state} is proved via a fixed point argument, see also \cite{article:DFRSS} for a similar argument applied to a multispecies tumor model.  One may also use a Galerkin approximation, which has been applied to similar systems in \cite{article:ColliGilardiHilhorst15, article:FrigeriGrasselliRocca15, article:GarckeLamNeumann,article:GarckeLamDirichlet,article:GarckeLamCHND, preprint:JiangWuZheng14, article:LowengrubTitiZhao13}.  The key difference here are that we establish boundedness of the nutrient concentration $\sigma$, which comes from the application of a weak comparison principle.  Here we also point out that the gradient $\nabla \varphi$ is continuous on the boundary up to initial time by the embedding $\varphi \in L^{\infty}(0,T;H^{2}) \cap H^{1}(0,T;L^{2}) \subset \subset C^{0}([0,T];H^{\beta})$ for $\beta < 2$ and the trace theorem.  Hence, the initial condition $\varphi_{0}$ needs to fulfill the boundary conditions. 

The unique solvability of the state equations \eqref{eq:state} allows us to define a solution operator $\Soln$ given as
\begin{align*}
\Soln(u) := (\varphi, \mu, \sigma),
\end{align*}
where the triplet $(\varphi, \mu, \sigma)$ is the unique weak solution to \eqref{eq:state} with data $(\varphi_{0}, \sigma_{0}, u)$ over the time interval $[0,T]$.  We use the notation $\varphi = \Soln_{1}(u)$ for the first component of $\Soln(u)$.  Then, we deduce the existence of a minimizer to the \eqref{OCProblem}.

\begin{thm}[Existence of minimizer]\label{thm:minimizer}
Under Assumption \ref{assump:main}, there exists at least one minimizer $(\varphi_{*}, u_{*}, \Optime)$ to \eqref{OCProblem}.  That is, $\varphi_{*} = \Soln_{1}(u_{*})$ with
\begin{align*}
J_{r}(\varphi_{*}, u_{*}, \Optime) & = \inf_{\substack{(w, s) \; \in \; \mathcal{U}_{\mathrm{ad}} \times [0,T]  \\
 \text{ s.t. } \phi \; = \; \Soln_{1}(w)}} J_{r}(\phi, w, s).
\end{align*}
\end{thm}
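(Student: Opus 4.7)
The plan is to apply the direct method of the calculus of variations, leveraging the compactness estimates of Theorem \ref{thm:state}. Since all terms of $J_r$ except $\beta_T \tau$ are nonnegative, the infimum $I := \inf J_r$ over admissible triples is finite; I take a minimizing sequence $\{(u_n, \tau_n)\}_{n \in \N} \subset \mathcal{U}_{\mathrm{ad}} \times [0,T]$ with associated states $(\varphi_n, \mu_n, \sigma_n) := \Soln(u_n)$. Because $\mathcal{U}_{\mathrm{ad}}$ is bounded in $L^\infty(Q)$, Banach--Alaoglu extracts (without relabelling) a weakly-$*$ convergent subsequence $u_n \to u_*$ in $L^\infty(Q)$; the pointwise constraints $0 \leq u_* \leq 1$ are preserved in the limit (e.g., by Mazur on $L^2(Q)$), so $u_* \in \mathcal{U}_{\mathrm{ad}}$. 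Compactness of $[0,T]$ yields a further subsequence with $\tau_n \to \Optime \in [0,T]$. The uniform bound $\overline{C}$ in Theorem \ref{thm:state} produces weak (or weak-$*$) limits $(\varphi_*, \mu_*, \sigma_*)$ in the respective spaces listed there, and an Aubin--Lions--Simon argument (combining $L^2(0,T;H^3)$ or $L^\infty(0,T;H^2)$ with $H^1(0,T;L^2)$) upgrades this to strong convergences $\varphi_n \to \varphi_*$ in $C^0([0,T];L^2) \cap L^2(0,T;H^2)$ and $\sigma_n \to \sigma_*$ in $C^0([0,T];L^2) \cap L^2(0,T;H^1)$.

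Next I would identify $\varphi_* = \Soln_1(u_*)$ by passing to the limit in the weak formulation \eqref{weak:state} tested against a fixed $\zeta \in H^1$. Continuity of $h$ and the growth condition \eqref{Psi:growth} for $\Psi'$, combined with the embedding $H^2 \hookrightarrow L^\infty$ valid in three dimensions, yield $h(\varphi_n) \to h(\varphi_*)$ and $\Psi'(\varphi_n) \to \Psi'(\varphi_*)$ strongly in $L^2(Q)$. Nonlinear terms such as $h(\varphi_n)(\PP \sigma_n - \AA - \alpha u_n)$ and $\CC h(\varphi_n) \sigma_n$ then pass to the limit by pairing these strong convergences with the weak(-$*$) convergences of $\sigma_n$ and $u_n$; linear terms pass by weak convergence alone. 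The initial conditions $\varphi(0)=\varphi_0$ and $\sigma(0)=\sigma_0$ are inherited from the $C^0([0,T];L^2)$ convergence, and the uniqueness statement of Theorem \ref{thm:state} then forces $(\varphi_*, \mu_*, \sigma_*) = \Soln(u_*)$.

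Finally, I would verify $J_r(\varphi_*, u_*, \Optime) \leq \liminf_{n \to \infty} J_r(\varphi_n, u_n, \tau_n)$. The control term $\tfrac{\beta_u}{2}\|u\|_{L^2(Q)}^2$ is weakly lower semi-continuous (note that weak-$*$ convergence in $L^\infty(Q)$ implies weak convergence in $L^2(Q)$ since $Q$ has finite measure), and $\beta_T \tau_n \to \beta_T \Optime$ converges. For the state-dependent integrals, writing
\[
\int_0^{\tau_n} \int_\Omega |\varphi_n - \varphi_Q|^2 \dx \dt = \int_0^T \chi_{[0,\tau_n]}(t) \int_\Omega |\varphi_n(t) - \varphi_Q(t)|^2 \dx \dt,
\]
the strong convergence $\varphi_n \to \varphi_*$ in $C^0([0,T];L^2)$ gives pointwise-in-$t$ convergence of the inner integrals, $\chi_{[0,\tau_n]} \to \chi_{[0,\Optime]}$ a.e.\ in $t$, and the $L^\infty(0,T;H^2)$ bound supplies an $L^1(0,T)$ dominating function, so dominated convergence yields actual convergence of this term. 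The averaged terms $\tfrac{1}{r}\int_{\tau_n - r}^{\tau_n}(\cdots)$ are handled identically. The principal obstacle is precisely this joint passage in the varying endpoints $\tau_n$ together with the sequence $\varphi_n$; it is here that the relaxation from \eqref{eq:originalcost} to \eqref{eq:cost} is essential, since the averaged integrals depend Lipschitz-continuously on $\tau$ and do not require pointwise-in-time traces of $\varphi_n$ at $t = \tau_n$ nor any convergence of time derivatives.
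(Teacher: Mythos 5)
Your proposal is correct and follows essentially the same route as the paper: direct method with a minimizing sequence, weak-$*$ compactness for $u_{n}$ and the states, compactness of $[0,T]$ for $\tau_{n}$, strong convergence of $\varphi_{n}$ via Aubin--Lions/Simon, the characteristic-function rewriting $\int_{0}^{\tau_{n}} = \int_{0}^{T}\chi_{[0,\tau_{n}]}$ together with dominated convergence for the state-dependent terms, and weak lower semicontinuity for the control term. One small inaccuracy: your opening claim that all terms of $J_{r}$ except $\beta_{T}\tau$ are nonnegative is false for the term $\frac{\beta_{S}}{2r}\int_{\tau-r}^{\tau}\int_{\Omega}(1+\varphi)$, since with a smooth potential $\varphi$ is not constrained to $[-1,1]$; the paper instead bounds this term from below by $-\frac{\beta_{S}}{2}\overline{C}$ using the uniform estimate \eqref{solution:estimate}, an argument already at your disposal, so the slip is easily repaired.
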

Note that we cannot exclude the trivial cases where $\Optime = 0$ or $\Optime = T$.  To establish the Fr\'{e}chet differentiability of the solution operator with respect to the control $u$, we first investigate the linearized state equations.  For arbitrary but fixed $\overline{u} \in \mathcal{U}_{\mathrm{ad}}$, let $(\overline{\varphi}, \overline{\mu}, \overline{\sigma}) = \Soln(\overline{u})$ denote the unique solution triplet to \eqref{eq:state} from Theorem \ref{thm:state}.  For $w \in L^{2}(Q)$, we consider the following linearized state equations,
\begin{subequations}\label{eq:linearstate}
\begin{alignat}{3}
\pd_{t} \Phi & = \Laplace \Xi + h(\overline{\varphi})(\PP \Sigma - \alpha w) + h'(\overline{\varphi}) \Phi (\PP \overline{\sigma} - \AA - \alpha \overline{u}) && \text{ in } Q, \\
\Xi & = A \Psi''(\overline{\varphi}) \Phi - B \Laplace \Phi && \text{ in } Q, \\
\pd_{t} \Sigma & = \Laplace \Sigma - \BB \Sigma - \CC (h(\overline{\varphi}) \Sigma + h'(\overline{\varphi}) \Phi \overline{\sigma}) && \text{ in } Q, \\
0 & = \pdnu \Phi = \pdnu \Xi = \pdnu \Sigma  && \text{ on } \Gamma \times (0,T), \\
0 & = \Phi(0) = \Sigma(0) && \text{ in } \Omega.
\end{alignat}
\end{subequations}
The unique solvability of \eqref{eq:linearstate} is obtained via a Galerkin procedure.
\begin{thm}[Unique solvability of the linearized state equations]
For any $w \in L^{2}(Q)$, there exists a unique triplet $(\Phi, \Xi, \Sigma)$ with
\begin{align*}
\Phi & \in L^{\infty}(0,T;H^{1}) \cap L^{2}(0,T;H^{3}) \cap H^{1}(0,T;(H^{1})^{*}), \\
\Xi & \in L^{2}(0,T;H^{1}), \\
\Sigma & \in L^{\infty}(0,T;H^{1}) \cap H^{1}(0,T;L^{2}) \cap L^{2}(0,T;H^{2}),
\end{align*}
such that for a.e $t \in (0,T)$ and for all $\zeta \in H^{1}$, 
\begin{subequations}\label{weak:Linearized}
\begin{align}
\label{weak:Linearized:varphi} 0 & = \inner{ \pd_{t} \Phi}{\zeta}_{H^{1}} + \int_{\Omega} \nabla \Xi \cdot \nabla \zeta - \left ( h(\overline{\varphi})(\PP \Sigma - \alpha w) + h'(\overline{\varphi}) (\PP \overline{\sigma} - \AA - \alpha \overline{u}) \Phi \right ) \zeta \dx, \\
\label{weak:Linearized:mu}  0 & = \int_{\Omega} \Xi \zeta- A \Psi''(\overline{\varphi}) \Phi \zeta - B \nabla \Phi \cdot \nabla \zeta \dx , \\
\label{weak:Linearized:sigma} 0 & = \int_{\Omega} \pd_{t} \Sigma \zeta+ \nabla \Sigma \cdot \nabla \zeta + \BB \Sigma \zeta +  \CC (h(\overline{\varphi}) \Sigma + h'(\overline{\varphi}) \Phi \overline{\sigma}) \zeta \dx.
\end{align}
\end{subequations}
Furthermore, there exists a constant $C > 0$ not depending $(\Phi, \Xi, \Sigma, w)$ such that
\begin{align*}
& \norm{\Phi}_{L^{\infty}(0,T;H^{1}) \cap L^{2}(0,T;H^{3}) \cap H^{1}(0,T;(H^{1})^{*})} + \norm{\Xi}_{L^{2}(0,T;H^{1})} \\
&  \quad + \norm{\Sigma}_{L^{\infty}(0,T;H^{1}) \cap H^{1}(0,T;L^{2}) \cap L^{2}(0,T;H^{2})} \leq C \norm{w}_{L^{2}(0,T;L^{2})}.
\end{align*} 
\end{thm}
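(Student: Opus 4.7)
The plan is a Faedo--Galerkin approximation based on the $L^2$-orthonormal family $\{e_i\}_{i \in \N}$ of eigenfunctions of the Neumann Laplacian on $\Omega$, which are smooth by the $C^3$-regularity of $\Gamma$. Set $V_n := \mathrm{span}(e_1, \ldots, e_n)$ and seek $\Phi_n, \Xi_n, \Sigma_n : [0,T] \to V_n$ satisfying the restriction of \eqref{weak:Linearized} to test functions in $V_n$. Equation \eqref{weak:Linearized:mu} algebraically determines the coefficients of $\Xi_n$ linearly in terms of those of $\Phi_n$, with time-dependent coefficients that are bounded measurable because $(\mathrm{A4})$ and the $L^\infty(Q)$-bound on $\overline{\varphi}$ given by Theorem \ref{thm:state} yield $\Psi''(\overline{\varphi}) \in L^\infty(Q)$. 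Substituting this relation into \eqref{weak:Linearized:varphi}--\eqref{weak:Linearized:sigma} reduces the Galerkin system to a linear ODE for the coefficient vector of $(\Phi_n, \Sigma_n)$ with $L^\infty$ coefficients and an $L^2$-in-time forcing generated by $w$; Carath\'{e}odory theory produces a unique global solution on $[0,T]$.

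The heart of the proof is deriving $n$-independent a priori bounds. First I would test \eqref{weak:Linearized:sigma} with $\Sigma_n$ and \eqref{weak:Linearized:varphi} with $\Phi_n$, using \eqref{weak:Linearized:mu} with $-\Laplace \Phi_n \in V_n$ to rewrite $\int_\Omega \nabla \Xi_n \cdot \nabla \Phi_n \dx$ and absorbing the sign-indefinite $\Psi''(\overline{\varphi})$-term via Young's inequality; Gronwall \eqref{Gronwall} then yields $\Phi_n \in L^\infty(0,T;L^2) \cap L^2(0,T;H^2)$ and $\Sigma_n \in L^\infty(0,T;L^2) \cap L^2(0,T;H^1)$, proportional to $\|w\|_{L^2(0,T;L^2)}$. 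Second, testing \eqref{weak:Linearized:varphi} with $\Xi_n$ and \eqref{weak:Linearized:mu} with $\pd_t \Phi_n \in V_n$ and rewriting
\begin{align*}
\frac{d}{dt} \int_\Omega \Psi''(\overline{\varphi}) \Phi_n^2 \dx = 2 \int_\Omega \Psi''(\overline{\varphi}) \Phi_n \pd_t \Phi_n \dx + \int_\Omega \Psi'''(\overline{\varphi}) \pd_t \overline{\varphi} \, \Phi_n^2 \dx
\end{align*}
produces a differential identity for $\frac{A}{2}\!\int_\Omega \Psi''(\overline{\varphi})\Phi_n^2 \dx + \frac{B}{2}\|\nabla\Phi_n\|_{L^2}^2$ with dissipation $\|\nabla\Xi_n\|_{L^2}^2$. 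The spatial mean of $\Xi_n$ is controlled by inserting $\zeta \equiv 1$ into \eqref{weak:Linearized:mu}, and Poincar\'{e} lifts this to a full $H^1$-bound. A second Gronwall argument then supplies $\Phi_n \in L^\infty(0,T;H^1)$ and $\Xi_n \in L^2(0,T;H^1)$. Viewing \eqref{weak:Linearized:mu} as $-B\Laplace \Phi_n = \Xi_n - A \Psi''(\overline{\varphi}) \Phi_n$ with Neumann data and observing that the right-hand side lies in $L^2(0,T;H^1)$ (using $\overline{\varphi} \in L^\infty(0,T;H^2)$ and the chain rule), elliptic regularity delivers $\Phi_n \in L^2(0,T;H^3)$. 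Testing \eqref{weak:Linearized:sigma} with $-\Laplace \Sigma_n$ upgrades $\Sigma_n$ to $L^\infty(0,T;H^1) \cap L^2(0,T;H^2)$ and the equation itself then gives $\pd_t \Sigma_n \in L^2(0,T;L^2)$; a duality argument on \eqref{weak:Linearized:varphi} yields $\pd_t \Phi_n \in L^2(0,T;(H^1)^*)$.

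With these uniform bounds, weak and weak-$*$ compactness extract a subsequence converging to a limit $(\Phi, \Xi, \Sigma)$ with the stated regularity; Aubin--Lions provides the strong compactness needed to pass to the limit in the products involving the approximating sequences, while all coefficients depending on $\overline{\varphi}$, $\overline{\sigma}$, $\overline{u}$ are fixed throughout. Uniqueness, together with the linear dependence of the bound on $\|w\|_{L^2(0,T;L^2)}$, follows from linearity: the difference of two solutions satisfies \eqref{weak:Linearized} with $w = 0$ and zero initial data, so the estimates just derived force it to vanish. The main technical obstacle is the cross term $\int_\Omega \Psi'''(\overline{\varphi}) \pd_t \overline{\varphi} \, \Phi_n^2 \dx$, where the only time-regularity available for $\pd_t \overline{\varphi}$ is $L^2(0,T;L^2)$; closing this term requires both $\overline{\varphi} \in L^\infty(Q)$ (to keep $\Psi'''(\overline{\varphi})$ bounded, exploiting \eqref{Psi:growth}) and a Gagliardo--Nirenberg interpolation \eqref{GagNirenIneq} bounding $\|\Phi_n\|_{L^4}^2$ by $\|\Phi_n\|_{H^1}^{3/2} \|\Phi_n\|_{L^2}^{1/2}$, after which Young's inequality absorbs the $H^1$-factor into the dissipation before Gronwall is invoked.
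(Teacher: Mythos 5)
Your proposal is correct and follows the same overall strategy as the paper: a Faedo--Galerkin scheme in the Neumann--Laplacian eigenbasis, the same first energy estimate (testing with $\Phi_n$, $\Laplace\Phi_n$, $\Sigma_n$), the same upgrade of $\Sigma_n$ (the paper tests with $\pd_t\Sigma_n$ and then uses elliptic regularity rather than testing with $-\Laplace\Sigma_n$, an equivalent variant), elliptic regularity for the $L^{2}(0,T;H^{3})$ bound on $\Phi_n$, a dual estimate for $\pd_t\Phi_n$, weak compactness, and uniqueness by linearity. The one genuinely different step is your treatment of the term $A\int_\Omega \Psi''(\overline{\varphi})\Phi_n\,\pd_t\Phi_n\dx$ arising from testing \eqref{weak:Linearized:mu} with $\pd_t\Phi_n$: you integrate by parts in time, which produces the commutator $\int_\Omega\Psi'''(\overline{\varphi})\pd_t\overline{\varphi}\,\Phi_n^2\dx$, handled with $\overline{\varphi}\in C^{0}(\overline{Q})$, Gagliardo--Nirenberg and Gronwall, whereas the paper never integrates by parts in time: it first bounds $\norm{\pd_t\Phi_n}_{L^{2}(0,t;(H^{1})^{*})}$ directly from \eqref{weak:Linearized:varphi} and then estimates the offending term through the $(H^{1})^{*}$--$H^{1}$ duality against $\Psi''(\overline{\varphi})\Phi_n$. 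Your route is workable but needs two small precautions which you should make explicit: (i) since $\pd_t\overline{\varphi}$ is only in $L^{2}(0,T;L^{2})$, the Young splitting must be arranged so that the Gronwall coefficient stays integrable in time --- e.g.\ bound $\norm{\Phi_n}_{H^{1}}^{3/2}\leq 1+\norm{\Phi_n}_{H^{1}}^{2}$ against the coefficient $\norm{\pd_t\overline{\varphi}}_{L^{2}}\norm{\Phi_n}_{L^{2}}^{1/2}\in L^{2}(0,T)$, rather than absorbing $\epsilon\norm{\Phi_n}_{H^{1}}^{2}$ and leaving $\norm{\pd_t\overline{\varphi}}_{L^{2}}^{4}$ behind, which is not integrable (there is in fact no $H^{1}$-dissipation in $\Phi_n$ on the left at that stage, only $\norm{\nabla\Xi_n}_{L^{2}}^{2}$); (ii) your energy $\tfrac{A}{2}\int_\Omega\Psi''(\overline{\varphi})\Phi_n^{2}\dx+\tfrac{B}{2}\norm{\nabla\Phi_n}_{L^{2}}^{2}$ is not sign-definite because $\Psi''$ may be negative, so coercivity must be restored using the boundedness of $\Psi''(\overline{\varphi})$ together with the $L^{\infty}(0,T;L^{2})$ bound on $\Phi_n$ from the first estimate. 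The paper's duality trick buys a shorter argument that never sees $\pd_t\overline{\varphi}$ or $\Psi'''$ at this point; your version is more self-contained in that it does not require the $(H^{1})^{*}$ bound before the energy estimate, at the cost of the extra interpolation and the care just described.
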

The expectation is as follows.  Let $\overline{u}, \hat{u} \in \mathcal{U}_{\mathrm{ad}} \subset \mathcal{U}$ be arbitrary, with $(\overline{\varphi}, \overline{\mu}, \overline{\sigma}) = \Soln(\overline{u})$ and $(\hat{\varphi}, \hat{\mu}, \hat{\sigma}) = \Soln(\hat{u})$ denote the unique solution triplets to \eqref{eq:state} corresponding to $\overline{u}$ and $\hat{u}$, respectively.  Denote by $w := \hat{u} - \overline{u} \in L^{2}(Q)$ and let $(\Phi^{w}, \Xi^{w}, \Sigma^{w})$ denote the unique solution to the linearized state equations \eqref{eq:linearstate} associated to $w$.  We define the remainders to be
%
%
\begin{subequations}\label{Fdiff:remainders}
\begin{alignat}{3}
\theta^{w} & := \hat{\varphi} - \overline{\varphi} - \Phi^{w} && \in L^{2}(0,T;H^{3}) \cap L^{\infty}(0,T;H^{1}) \cap H^{1}(0,T;(H^{1})^{*}), \\ 
\rho^{w} & := \hat{\mu} - \overline{\mu} - \Xi^{w} && \in L^{2}(0,T;H^{1}), \\
\xi^{w} & := \hat{\sigma} - \overline{\sigma} - \Sigma^{w} && \in L^{\infty}(0,T;H^{1}) \cap L^{2}(0,T;H^{2}) \cap H^{1}(0,T;L^{2}).
\end{alignat}
\end{subequations}
If, for a suitable Banach space $\mathcal{Y}$ yet to be identified, we have
\begin{align*}
\frac{\norm{(\theta^{w}, \rho^{w}, \xi^{w})}_{\mathcal{Y}}}{\norm{w}_{L^{2}(Q)}} \to 0 \text{ as } \norm{w}_{L^{2}(Q)} \to 0,
\end{align*}
then, it holds that the solution operator $\Soln$ is Fr\'{e}chet differentiable at $\overline{u}$, the Fr\'{e}chet derivative with respect to the control $u$, denoted as $\der_{u} \Soln$, belongs to $\mathcal{L}(L^{2}(Q), \mathcal{Y})$ and satisfies
\begin{align*}
\der_{u} \Soln(\overline{u}) w = (\Phi^{w}, \Xi^{w}, \Sigma^{w}).
\end{align*}

With the unique solvability of the linearized state equations, we have the following result on the Fr\'{e}chet differentiability of the solution operator.
\begin{thm}[Fr\'{e}chet differentiability with respect to the control]\label{thm:Frechet:u}
Under Assumption \ref{assump:main}, the solution operator $\Soln$ is Fr\'{e}chet differentiable in $\mathcal{U}$ as a mapping from $L^{2}(Q)$ to the product Banach space
\begin{align*}
\mathcal{Y} := & \left [ L^{2}(0,T;H^{2}) \cap H^{1}(0,T;(H^{2})^{*}) \cap C^{0}([0,T];L^{2}) \right ] \\
& \quad \times L^{2}(Q) \times \left [ L^{\infty}(0,T;H^{1}) \cap H^{1}(0,T;L^{2}) \right ].
\end{align*}
That is, for any $\hat{u}, \overline{u} \in \mathcal{U}_{\mathrm{ad}} \subset \mathcal{U}$ with $w = \hat{u} - \overline{u} \in L^{2}(Q)$, there exists a positive constant $C_{\mathrm{diff},u}$ not depending on $\hat{u}, \overline{u}$ and $w$ such that
\begin{align}\label{Fdiff:u:thm:estimate}
\norm{(\theta^{w}, \rho^{w}, \xi^{w})}_{\mathcal{Y}}^{2} \leq C_{\mathrm{diff},u} \norm{w}_{L^{2}(Q)}^{4},
\end{align}
where $(\theta^{w}, \rho^{w}, \xi^{w})$ are defined as in \eqref{Fdiff:remainders}.  
\end{thm}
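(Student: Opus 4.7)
The plan is to derive and analyse the PDE system satisfied by the remainder triplet $(\theta^w, \rho^w, \xi^w)$. First, I would subtract the linearized system \eqref{eq:linearstate} driven by $w = \hat u - \overline u$ from the difference of \eqref{eq:state} written for $\hat u$ and $\overline u$. The key algebraic input is Taylor's formula \eqref{Taylor} applied to $\Psi'$ and to $h$, which produces the decompositions
\begin{align*}
\Psi'(\hat\varphi) - \Psi'(\overline\varphi) - \Psi''(\overline\varphi) \Phi^w &= \Psi''(\overline\varphi) \theta^w + (\hat\varphi - \overline\varphi)^2 \mathcal{R}_\Psi, \\
h(\hat\varphi) - h(\overline\varphi) - h'(\overline\varphi) \Phi^w &= h'(\overline\varphi) \theta^w + (\hat\varphi - \overline\varphi)^2 \mathcal{R}_h,
\end{align*}
with integral remainders $\mathcal{R}_\Psi, \mathcal{R}_h \in L^\infty(Q)$; the boundedness uses $\overline\varphi, \hat\varphi \in L^\infty(Q)$ via $L^\infty(0,T;H^2) \hookrightarrow L^\infty(Q)$ in three dimensions together with the quartic growth of $\Psi$. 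Collecting terms yields the schematic system
\begin{align*}
\pd_t \theta^w - \Laplace \rho^w &= \ell_1(\theta^w, \xi^w) + R_\theta, \\
\rho^w + B \Laplace \theta^w - A \Psi''(\overline\varphi) \theta^w &= A (\hat\varphi - \overline\varphi)^2 \mathcal{R}_\Psi, \\
\pd_t \xi^w - \Laplace \xi^w + (\BB + \CC h(\overline\varphi)) \xi^w &= \ell_2(\theta^w, \xi^w) + R_\xi,
\end{align*}
with zero initial data and homogeneous Neumann boundary conditions for all three unknowns, where $\ell_1, \ell_2$ are linear in $(\theta^w, \xi^w)$ with $L^\infty(Q)$-coefficients, and $R_\theta, R_\xi$ group the quadratic sources such as $(\hat\varphi - \overline\varphi)^2$, $\Phi^w \Sigma^w$, $\Phi^w \xi^w$ and $(\PP \Sigma^w - \alpha w)(h(\hat\varphi) - h(\overline\varphi))$.

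\smallskip

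The next step is to quantify these quadratic sources in $L^2(Q)$ by a factor $C\|w\|_{L^2(Q)}^2$. The continuous dependence estimate \eqref{solution:ctsdep:s} bounds $\hat\varphi - \overline\varphi$ in $L^\infty(0,T;L^2) \cap L^2(0,T;H^2)$ by $C\|w\|_{L^2(Q)}$, and together with the three-dimensional embedding $H^2 \hookrightarrow L^\infty$ gives
\begin{align*}
\|(\hat\varphi - \overline\varphi)^2\|_{L^2(Q)}^2 \leq \|\hat\varphi - \overline\varphi\|_{L^\infty(0,T;L^2)}^2 \|\hat\varphi - \overline\varphi\|_{L^2(0,T;L^\infty)}^2 \leq C \|w\|_{L^2(Q)}^4;
\end{align*}
Hölder-type variants combined with the linearized bound on $(\Phi^w, \Sigma^w)$ dispatch the mixed quadratic sources analogously. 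I would then perform a coupled energy estimate: test the $\theta^w$-equation with $\theta^w$, insert the algebraic identity for $\rho^w$, and integrate by parts twice using $\pdnu \theta^w = \pdnu \rho^w = 0$ so that $B \|\Laplace \theta^w\|_{L^2}^2$ appears as the only coercive quantity on the left-hand side, while in parallel testing the $\xi^w$-equation first with $\xi^w$ and then with $\pd_t \xi^w$ to secure $L^\infty(0,T;H^1) \cap H^1(0,T;L^2)$-control. The sign-indefinite contribution $A\int_\Omega \Psi''(\overline\varphi)|\nabla\theta^w|^2 \dx$ and the $\CC \hat\sigma h'(\overline\varphi)\theta^w$ term coupling the two energies are absorbed via the Neumann interpolation $\|\nabla f\|_{L^2}^2 \leq \eta \|\Laplace f\|_{L^2}^2 + C_\eta \|f\|_{L^2}^2$ and Young's inequality, and after summing the two inequalities the Gronwall lemma \eqref{Gronwall} closes the estimate, yielding
\begin{align*}
\|\theta^w\|_{L^\infty(0,T;L^2) \cap L^2(0,T;H^2)}^2 + \|\xi^w\|_{L^\infty(0,T;H^1) \cap H^1(0,T;L^2)}^2 \leq C \|w\|_{L^2(Q)}^4.
\end{align*}

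\smallskip

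The remaining components of the $\mathcal{Y}$-norm are recovered by direct manipulation. The $L^2(Q)$-bound on $\rho^w$ follows from its algebraic identity $\rho^w = A \Psi''(\overline\varphi) \theta^w - B \Laplace \theta^w + A (\hat\varphi - \overline\varphi)^2 \mathcal{R}_\Psi$ combined with the previous estimates. Testing the weak form of the $\theta^w$-equation with $\zeta \in H^2$ satisfying $\pdnu \zeta = 0$, two integrations by parts yield $\abs{\inner{\pd_t \theta^w}{\zeta}} \leq (\|\rho^w\|_{L^2} + \|\ell_1(\theta^w, \xi^w) + R_\theta\|_{L^2}) \|\zeta\|_{H^2}$, giving $\pd_t\theta^w \in L^2(0,T;(H^2)^*)$ with the same quartic scaling, while $\theta^w \in C^0([0,T]; L^2)$ (with $\theta^w(0) = 0$) then follows from the standard embedding $L^2(0,T;H^2) \cap H^1(0,T;(H^2)^*) \hookrightarrow C^0([0,T];L^2)$. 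The main obstacle is the Cahn--Hilliard energy estimate: since $\Psi''(\overline\varphi)$ is sign-indefinite and the algebraic identity for $\rho^w$ itself carries the quadratic remainder $A(\hat\varphi-\overline\varphi)^2 \mathcal{R}_\Psi$, the only coercive quantity available on the left-hand side is $B\|\Laplace\theta^w\|_{L^2}^2$, and achieving the sharp quartic scaling $\|w\|_{L^2(Q)}^4$ demands both the three-dimensional embedding $L^\infty(L^2) \cap L^2(H^2) \hookrightarrow L^4(Q)$ for $\hat\varphi - \overline\varphi$ and a careful simultaneous treatment of the $\theta^w$ and $\xi^w$ energies.
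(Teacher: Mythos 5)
Your proposal is correct and follows essentially the same route as the paper: Taylor's theorem with integral remainder (the remainders $R^{w}_{1}, R^{w}_{2}$ bounded in $L^{\infty}(Q)$ via the uniform bound on $\overline{\varphi},\hat{\varphi}$), the continuous dependence estimate \eqref{solution:ctsdep:s} together with the linearized bounds to control the quadratic sources by $C\norm{w}_{L^{2}(Q)}^{2}$, coupled energy estimates for $(\theta^{w},\xi^{w})$ closed by Gronwall, and then elliptic regularity, the $(H^{2})^{*}$-duality estimate for $\pd_{t}\theta^{w}$ and the embedding $L^{2}(0,T;H^{2}) \cap H^{1}(0,T;(H^{2})^{*}) \subset C^{0}([0,T];L^{2})$. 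The only deviation is cosmetic: you extract $B\norm{\Laplace \theta^{w}}_{L^{2}}^{2}$ directly by inserting the algebraic identity for $\rho^{w}$, whereas the paper tests \eqref{Fdiff:mu} with $\theta^{w}$ and $\tfrac{1}{B}\rho^{w}$ to get coercivity in $\norm{\nabla\theta^{w}}_{L^{2}(Q)}^{2}$ and $\norm{\rho^{w}}_{L^{2}(Q)}^{2}$ first and recovers the $H^{2}$-bound on $\theta^{w}$ by elliptic regularity afterwards; both variants close in the same way.
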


We now define a reduced functional 
\begin{align*}
\mathcal{J}(u, \tau) := J_{r}(S_{1}(u), u, \tau).
\end{align*}
For any $u \in \mathcal{U}_{\mathrm{ad}} \subset \mathcal{U}$, set $w = u - u_{*} \in L^{2}(Q)$ and let $(\Phi^{w}, \Xi^{w}, \Sigma^{w})$ be the unique solution to \eqref{eq:linearstate} corresponding to $w$, the optimal control $u_{*}$ and the corresponding state variables $(\varphi_{*}, \mu_{*}, \sigma_{*})$.  By Theorem \ref{thm:Frechet:u}, $\mathcal{J}$ is Fr\'{e}chet differentiable with respect to the control with 
\begin{equation}\label{Fdiff:J:wrt:u}
\begin{aligned}
\left (\der_{u} \mathcal{J}(u_{*},\Optime) \right ) w & = \beta_{Q} \int_{0}^{\Optime} \int_{\Omega} (\varphi_{*} - \varphi_{Q}) \Phi^{w} \dx \dt + \frac{\beta_{\Omega}}{r} \int_{\Optime-r}^{\Optime} \int_{\Omega} (\varphi_{*} - \varphi_{\Omega}) \Phi^{w}\dx \dt\\
& + \frac{\beta_{S}}{2r} \int_{\Optime-r}^{\Optime} \int_{\Omega} \Phi^{w} \dx \dt + \beta_{u} \int_{0}^{T} \int_{\Omega} u_{*} w \dx \dt,
\end{aligned}
\end{equation}
Next, we make use of the following adjoint system to eliminate the presence of the linearized state variable $\Phi^{w}$ in \eqref{Fdiff:J:wrt:u},
\begin{subequations}\label{eq:adjoint}
\begin{alignat}{3}
-\pd_{t}p + B \Laplace q & =  A\Psi''(\varphi_{*}) q - \CC h'(\varphi_{*}) \sigma_{*} r + h'(\varphi_{*}) (\PP \sigma_{*} - \AA - \alpha u_{*})p  &&  \\
\notag & + \beta_{Q}(\varphi_{*} - \varphi_{Q}) + \tfrac{1}{2r} \chi_{(\Optime - r, \Optime)}(t) \left ( 2\beta_{\Omega} (\varphi_{*} - \varphi_{\Omega}) + \beta_{S} \right ), && \text{ in } \Omega \times (0,\Optime), \\
q & = \Laplace p && \text{ in } \Omega \times (0,\Optime), \\
-\pd_{t}r & = \Laplace r - \BB r - \CC h(\varphi_{*})r + \PP h(\varphi_{*})p && \text{ in } \Omega \times (0,\Optime), \\
0 & = \pdnu p = \pdnu q = \pdnu r && \text{ on } \Gamma \times (0,\Optime), \\
r(\Optime) & = 0, \quad p(\Optime) = 0 && \text{ in } \Omega.
\end{alignat}
\end{subequations}
Note that the adjoint system is supplemented with terminal conditions at the optimal treatment time $\Optime$.  We now state the unique solvability result.
\begin{thm}[Unique solvability of the adjoint equations]\label{thm:adjoint}
Under Assumption \ref{assump:main}, for any $u \in \mathcal{U}_{\mathrm{ad}}$ there exists a unique triplet $(p,q,r)$ associated to $\Soln(u) = (\varphi,\mu, \sigma)$ with
\begin{align*}
p & \in L^{2}(0,\Optime;H^{2}) \cap H^{1}(0,\Optime;(H^{2})^{*}) \cap C^{0}([0,\Optime];L^{2}), \\
q & \in L^{2}(0,\Optime;L^{2}), \\
r & \in L^{2}(0,\Optime;H^{2}) \cap L^{\infty}(0,\Optime;H^{1}) \cap H^{1}(0,\Optime;L^{2}) \cap C^{0}([0,\Optime];L^{2}),
\end{align*}
satisfying
\begin{subequations}\label{adjoint:weak}
\begin{align}
\label{adjoint:weak:p} 0 & = \inner{-\pd_{t}p}{\zeta}_{H^{2}} + \int_{\Omega} B q \Laplace \zeta - A \Psi''(\varphi) q \zeta +  h'(\varphi) \left ( \CC \sigma r - (\PP \sigma - \AA - \alpha u)p \right ) \zeta \dx \\
\notag & - \int_{\Omega} \left ( \beta_{Q}(\varphi - \varphi_{Q}) + \tfrac{1}{2r} \chi_{(\Optime - r, \Optime)}(t) \left ( 2 \beta_{\Omega} (\varphi - \varphi_{\Omega}) + \beta_{S} \right ) \right ) \zeta \dx, \\
\label{adjoint:weak:q} 0 & = \int_{\Omega} q \eta + \nabla p \cdot \nabla \zeta \dx, \\
\label{adjoint:weak:r} 0 & = \int_{\Omega} - \pd_{t}r \eta + \nabla r \cdot \nabla \eta + \BB r \eta + \CC h(\varphi) r \eta - \PP h(\varphi) p \eta \dx
\end{align}
\end{subequations}
for a.e. $t \in (0,\Optime)$ and for all $\eta \in H^{1}$ and $\zeta \in H^{2}$.
\end{thm}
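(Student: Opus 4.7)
The system \eqref{eq:adjoint} is a coupled backward-in-time system: the equation for $p$ is effectively a fourth-order operator (since $q=\Laplace p$ converts the first equation into $-\pd_t p + B\Laplace^2 p = \cdots$), coupled to the backward heat equation for $r$ via the two crossed terms $-\CC h'(\varphi_*)\sigma_* r$ and $\PP h(\varphi_*) p$. My plan is to convert the system to a forward-in-time problem via the change of variables $s=\Optime-t$, then apply a Faedo--Galerkin scheme based on the eigenfunctions $\{w_k\}$ of $-\Laplace$ with Neumann boundary data (which form an orthonormal basis of $L^2$ and are orthogonal in $H^1$); these eigenfunctions are regular enough to be used as test functions for both the fourth-order $p$-equation and the second-order $r$-equation. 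The initial conditions for the forward system are $p=r=0$.

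At the Galerkin level, existence of an approximate solution $(p_n, q_n, r_n)$ is an ODE system with locally Lipschitz right-hand side, hence a local solution exists. The key is then a uniform energy estimate. First I would test the (time-reversed) $p$-equation with $p_n$ itself. The leading term $B\int_\Omega \Laplace q_n \cdot p_n\dx$, together with $q_n=\Laplace p_n$ and the homogeneous Neumann conditions on $p_n$ and $q_n$, yields after two integrations by parts the good term $B\|\Laplace p_n\|_{L^2}^2 = B\|q_n\|_{L^2}^2$. The dangerous term $A\Psi''(\varphi_*) q_n$ tested against $p_n$ is integrated by parts once to $A\int_\Omega \nabla p_n\cdot\nabla(\Psi''(\varphi_*)p_n)\dx$, and is then controlled by Young's inequality using the bounds $\Psi''(\varphi_*)\in L^\infty(Q)$ and $\nabla\Psi''(\varphi_*) = \Psi'''(\varphi_*)\nabla\varphi_*\in L^\infty(0,T;L^6)$ guaranteed by $\varphi_*\in L^\infty(0,T;H^2)\hookrightarrow L^\infty(Q)\cap L^\infty(0,T;W^{1,6})$ from Theorem \ref{thm:state} and \eqref{Psi:growth}. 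The coupling term $-\CC h'(\varphi_*)\sigma_* r_n$ produces a product $\|r_n\|_{L^2}\|p_n\|_{L^2}$, which I keep on the right. The source terms lie in $L^2(Q)$ by Assumption \ref{assump:main}. Testing the $r_n$-equation with $r_n$ yields $\frac{1}{2}\frac{d}{ds}\|r_n\|_{L^2}^2 + \|\nabla r_n\|_{L^2}^2 + \BB\|r_n\|_{L^2}^2$ plus the coupling $\PP h(\varphi_*) p_n$, again giving $\|p_n\|_{L^2}\|r_n\|_{L^2}$. Summing the two estimates, all coupling terms are absorbed by Young's inequality, and the Gronwall inequality \eqref{Gronwall} produces uniform bounds on $p_n$ in $L^\infty(0,\Optime;L^2)$, on $q_n=\Laplace p_n$ in $L^2(0,\Optime;L^2)$, and on $r_n$ in $L^\infty(0,\Optime;L^2)\cap L^2(0,\Optime;H^1)$.

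For the remaining regularity: elliptic regularity applied to $-\Laplace p_n = -q_n$ with Neumann data upgrades the $L^2(L^2)$-bound on $q_n$ to $p_n\in L^2(0,\Optime;H^2)$. For $\pd_s p_n\in L^2(0,\Optime;(H^2)^*)$ I would test the $p$-equation with an arbitrary $\zeta\in H^2$, replace $\int B\Laplace q_n\zeta\dx$ by $\int B q_n\Laplace\zeta\dx$ (using $\pdnu q_n=\pdnu\zeta = 0$ after a standard trace argument, or by rewriting $q_n\Laplace\zeta$ directly and using density of the Galerkin basis), and control each term by the previously obtained bounds times $\|\zeta\|_{H^2}$. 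Standard parabolic estimates for the backward heat equation for $r_n$ (testing with $\pd_s r_n$ and with $-\Laplace r_n$) deliver $r_n\in H^1(0,\Optime;L^2)\cap L^\infty(0,\Optime;H^1)\cap L^2(0,\Optime;H^2)$, using the fact that $p_n\in L^2(0,\Optime;L^2)$ is already controlled. The continuous embedding $H^1(0,\Optime;(H^2)^*)\cap L^2(0,\Optime;H^2)\hookrightarrow C^0([0,\Optime];L^2)$ and $H^1(0,\Optime;L^2)\cap L^2(0,\Optime;H^2)\hookrightarrow C^0([0,\Optime];H^1)$ give the continuity-in-time statements for $p$ and $r$ respectively.

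Passage to the limit $n\to\infty$ uses weak and weak-$\star$ compactness together with Aubin--Lions to pass to strong convergence of $p_n, r_n$ in $L^2(Q_\tau)$, which suffices for the linear couplings and the terms multiplied by the fixed coefficients $h(\varphi_*), h'(\varphi_*), \Psi''(\varphi_*), \sigma_*, u_*$; the weak formulations \eqref{adjoint:weak:p}--\eqref{adjoint:weak:r} are then recovered by testing with linear combinations of eigenfunctions and using density in $H^1$ and $H^2$. Uniqueness is immediate: a difference of two solutions satisfies the same system with zero forcing and zero terminal data, and the same energy estimate with Gronwall yields that the difference vanishes. I expect the main obstacle to be the simultaneous handling of the fourth-order structure of the $p$-equation and the $p$--$r$ coupling at the level of a single integrated energy identity, in particular ensuring that the term $A\Psi''(\varphi_*)q$, which involves a product of an $L^\infty(Q)$ function with the highest-order object $\Laplace p$, is absorbed by the good term $B\|\Laplace p\|_{L^2}^2$ rather than treated as a perturbation via Gronwall alone; this is precisely where the regularity $\varphi_*\in L^\infty(0,T;H^2)$ afforded by Theorem \ref{thm:state} is essential.
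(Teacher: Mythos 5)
Your plan is correct and essentially the paper's own: a Galerkin scheme in an $L^{2}$-orthonormal basis of $H^{2}$, energy estimates obtained by testing with $p_{n}$, $Bq_{n}$, $r_{n}$ and $\pd_{t}r_{n}$ (exploiting the cancellation that produces the good term $B\norm{q_{n}}_{L^{2}}^{2}$), elliptic regularity for $p_{n}$ and $r_{n}$, a duality bound for $\pd_{t}p_{n}$ in $L^{2}(0,\Optime;(H^{2})^{*})$, weak compactness, and uniqueness via the same estimates applied to the difference of two solutions. Two minor remarks: the paper controls the term $A\Psi''(\varphi)q_{n}p_{n}$ directly as $AC_{*}\norm{q_{n}}_{L^{2}}\norm{p_{n}}_{L^{2}}$ using only $\varphi \in C^{0}(\overline{Q})$, so your extra integration by parts and the $L^{\infty}(0,T;W^{1,6})$ bound on $\varphi_{*}$ are not needed; and your ``locally Lipschitz right-hand side'' step glosses over the time-discontinuity of the source $\chi_{(\Optime-r,\Optime)}(t)$ (and the merely $L^{2}$-in-time data), which the paper handles by approximating $u$, $\varphi_{Q}$, $\varphi_{\Omega}$ in $C^{0}([0,T];L^{2})$ and solving the ODE system first on $(\Optime-r,\Optime]$ and then on $[0,\Optime-r]$ (alternatively, Carath\'{e}odory's theorem for linear systems would do).
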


The first order necessary optimality conditions for the minimizer $(u_{*}, \Optime)$ of Theorem \ref{thm:minimizer} also requires the Fr\'{e}chet derivative of $\mathcal{J}$ with respect to $\tau$, and for this we make the additional assumption on the target functions $\varphi_{Q}$ and $\varphi_{\Omega}$.
\begin{assump}\label{assump:regularityU}
We now assume that $\varphi_{Q} \in H^{1}(0,T;L^{2})$ and $\varphi_{\Omega}  \in H^{1}(-r,T;L^{2})$.
\end{assump} 
Furthermore, we extend $\varphi$ to negative times using the initial condition, i.e., $\varphi(t) = \varphi_{0}$ for $t < 0$.

\begin{thm}[Fr\'{e}chet differentiability of the reduced functional with respect to time]
Under Assumptions \ref{assump:main} and \ref{assump:regularityU}, let $u \in \mathcal{U}_{\mathrm{ad}}$ be arbitrary with corresponding state variables $\Soln(u) = (\varphi, \mu, \sigma)$.  The reduced functional $\mathcal{J}(u, \tau)$ is Fr\'{e}chet differentiable with respect to $\tau$ with
\begin{align*}
 \der_{\tau} \mathcal{J}(u, \tau) & = \beta_{T} + \frac{\beta_{Q}}{2} \norm{\varphi(\tau) - \varphi_{Q}(\tau)}_{L^{2}}^{2} + \frac{\beta_{u}}{2} \norm{u(\tau)}_{L^{2}}^{2}  + \frac{\beta_{S}}{2r} \int_{\Omega} \varphi(\tau) - \varphi(\tau - r) \dx \\
& + \frac{\beta_{\Omega}}{2r} \left ( \norm{(\varphi - \varphi_{\Omega})(\tau)}_{L^{2}}^{2} - \norm{(\varphi - \varphi_{\Omega})(\tau - r)}_{L^{2}}^{2} \right ).
\end{align*}
\end{thm}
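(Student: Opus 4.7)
The plan is direct: since $\tau \in (0,T)$ is a scalar parameter, Fr\'{e}chet differentiability of $\mathcal{J}(u,\cdot)$ collapses to ordinary one-dimensional differentiability, and it suffices to differentiate each of the five summands of $J_{r}$ separately using the fundamental theorem of calculus. The only nontrivial ingredient is continuity of the integrands in $t$ at the endpoints $\tau$ and $\tau-r$, which is exactly what Assumption \ref{assump:regularityU} is designed to secure.

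Concretely, I would form the incremental quotient $k^{-1}[\mathcal{J}(u,\tau+k)-\mathcal{J}(u,\tau)]$ for small $k$ with $\tau+k \in (0,T)$, and decompose it according to the five terms of $J_{r}$. For the tracking term, the difference equals $\tfrac{\beta_{Q}}{2}\int_{\tau}^{\tau+k}\norm{\varphi(t)-\varphi_{Q}(t)}_{L^{2}}^{2} \dt$. Theorem \ref{thm:state} gives $\varphi \in C^{0}(\overline{Q}) \hookrightarrow C^{0}([0,T];L^{2})$, while Assumption \ref{assump:regularityU} combined with the Sobolev embedding $H^{1}(0,T;L^{2}) \hookrightarrow C^{0}([0,T];L^{2})$ yields $\varphi_{Q} \in C^{0}([0,T];L^{2})$. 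Hence $t \mapsto \norm{\varphi(t)-\varphi_{Q}(t)}_{L^{2}}^{2}$ is continuous at $\tau$, and the fundamental theorem of calculus identifies the limit of the quotient as $\tfrac{\beta_{Q}}{2}\norm{\varphi(\tau)-\varphi_{Q}(\tau)}_{L^{2}}^{2}$.

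For the averaged endpoint terms I would exploit the identity
\begin{align*}
\int_{\tau+k-r}^{\tau+k} g(t) \dt - \int_{\tau-r}^{\tau} g(t) \dt = \int_{\tau}^{\tau+k} g(t) \dt - \int_{\tau-r}^{\tau-r+k} g(t) \dt,
\end{align*}
which separates the two boundary contributions at $\tau$ and at $\tau-r$. The extension $\varphi(t)=\varphi_{0}$ for $t<0$ together with $\varphi_{\Omega} \in H^{1}(-r,T;L^{2}) \hookrightarrow C^{0}([-r,T];L^{2})$ from Assumption \ref{assump:regularityU} ensures continuity of the relevant integrands at both $\tau$ and $\tau-r$; a second application of the fundamental theorem then produces the $\beta_{\Omega}$- and $\beta_{S}$-boundary terms appearing in the derivative. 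The $\beta_{u}$ and $\beta_{T}\tau$ contributions are obtained by direct inspection.

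The remainder estimate required for Fr\'{e}chet differentiability reduces to the assertion that, for $g \in C^{0}([-r,T];\R)$, one has $k^{-1}\int_{\tau}^{\tau+k}(g(t)-g(\tau))\dt \to 0$ as $k \to 0$, which is immediate from the uniform continuity of $g$ on a compact interval. Consequently the substantive content of the theorem lies not in the calculation itself but in ensuring the correct function-space setting: Assumption \ref{assump:regularityU} is precisely what upgrades the $L^{2}(Q)$ targets of Assumption \ref{assump:main} to continuous-in-time objects, rendering the pointwise evaluations $\varphi_{Q}(\tau)$, $\varphi_{\Omega}(\tau)$ and $\varphi_{\Omega}(\tau-r)$ meaningful. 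This is the only real obstacle; once continuity is in hand, the proof is essentially a Lebesgue-point computation.
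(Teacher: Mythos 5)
Your treatment of the $\varphi$-dependent terms is correct and close in spirit to the paper's, though the mechanics differ. The paper first uses the change-of-variables identity \eqref{timechange} to convert the moving window $\int_{\tau-r}^{\tau}$ into a fixed-endpoint integral $\int_{0}^{\tau}F(t,\varphi)\dt$ plus $\tau$-independent contributions, and then proves differentiability of $\tau\mapsto\int_{0}^{\tau}\abs{f(t)}^{2}\dt$ for scalar $f\in H^{1}(0,T)$ with the quantitative remainder bound $2h^{3/2}\norm{f}_{L^{\infty}(0,T)}\norm{\pd_{t}f}_{L^{2}(0,T)}$, i.e.\ it uses the $H^{1}$-in-time regularity of Assumption \ref{assump:regularityU} quantitatively. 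You instead shift the window directly via your displayed identity and use only the continuity in time coming from $H^{1}(0,T;L^{2})\subset C^{0}([0,T];L^{2})$, together with $\varphi\in C^{0}([0,T];L^{2})$ from Theorem \ref{thm:state} and the constant extension for $t<0$, obtaining the $o(k)$ remainder from uniform continuity. For those terms this is a valid and somewhat more elementary route; what it gives up is only the explicit rate $O(h^{3/2})$, which the theorem does not need.

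The genuine gap is the $\beta_{u}$ term, which you dismiss ``by direct inspection''. With $J_{r}$ as defined in \eqref{eq:cost}, the control penalty is $\frac{\beta_{u}}{2}\int_{0}^{T}\int_{\Omega}\abs{u}^{2}\dx\dt$, which does not depend on $\tau$ at all, so direct inspection yields a zero contribution, not $\frac{\beta_{u}}{2}\norm{u(\tau)}_{L^{2}}^{2}$. If instead you read that term with upper limit $\tau$ (as in the unrelaxed functional \eqref{eq:originalcost}), your continuity argument fails precisely there: $u\in\mathcal{U}_{\mathrm{ad}}$ is merely $L^{\infty}$ in time, so the evaluation $u(\tau)$ and the continuity of $t\mapsto\norm{u(t)}_{L^{2}}^{2}$ at $\tau$ are not available, and the fundamental theorem of calculus gives differentiability only at a.e.\ Lebesgue point rather than at every $\tau$ --- this is exactly the difficulty that motivated moving the control penalty to the fixed interval $[0,T]$ in the relaxation. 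The paper's own computation accordingly produces $\der_{\tau}\mathcal{J}$ without any $\beta_{u}$ term, consistent with \eqref{eq:cost} and with \eqref{FONC:tau}; the $\frac{\beta_{u}}{2}\norm{u(\tau)}_{L^{2}}^{2}$ in the theorem's statement is an internal inconsistency of the paper, and no argument along your lines (or the paper's) can deliver it without additional temporal regularity of $u$. You should either drop that term or flag the discrepancy explicitly, rather than claim it follows by inspection.
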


The first order necessary optimality conditions to \eqref{OCProblem} for the minimizer $(u_{*}, \Optime)$ are given as follows.
\begin{thm}[First order necessary optimality conditions]\label{thm:FONC}
Under Assumptions \ref{assump:main} and \ref{assump:regularityU}, let $(u_{*}, \Optime) \in \mathcal{U}_{\mathrm{ad}} \times [0,T]$ denote a minimizer to \eqref{OCProblem} with corresponding state variables $\Soln(u_{*}) = (\varphi_{*}, \mu_{*}, \sigma_{*})$ and associated adjoint variables $(p,q,r)$.  Then, it holds that
\begin{equation}\label{FONC:u}
\begin{aligned}
\int_{0}^{T} \int_{\Omega} \beta_{u} u_{*} (u - u_{*}) \dx \dt - \int_{0}^{\Optime} \int_{\Omega} h(\varphi_{*}) \alpha p (u - u_{*}) \dx \dt \geq 0 \quad \forall u \in \mathcal{U}_{\mathrm{ad}},
\end{aligned}
\end{equation}
and
\begin{equation}\label{FONC:tau}
\begin{aligned}
\beta_{T} & + \frac{\beta_{Q}}{2} \norm{(\varphi_{*} - \varphi_{Q})(\Optime)}_{L^{2}}^{2} + \frac{\beta_{S}}{2r} \int_{\Omega} \varphi_{*}(\Optime) - \varphi_{*}(\Optime - r) \dx \\
&  + \frac{\beta_{\Omega}}{2r} \left ( \norm{(\varphi_{*} - \varphi_{\Omega})(\Optime)}_{L^{2}}^{2} - \norm{(\varphi_{*} - \varphi_{\Omega})(\Optime - r)}_{L^{2}}^{2} \right ) \begin{cases}
\geq 0 & \text{ if } \Optime = 0, \\
= 0 & \text{ if } \Optime \in (0,T), \\
\leq 0 & \text{ if } \Optime = T.
\end{cases}
\end{aligned}
\end{equation}
\end{thm}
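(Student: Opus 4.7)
The plan is to derive \eqref{FONC:u} and \eqref{FONC:tau} separately from the minimization property of $(u_{*}, \Optime)$, exploiting the convexity of $\mathcal{U}_{\mathrm{ad}}$ and of $[0,T]$ together with the two Fr\'{e}chet derivatives of the reduced functional $\mathcal{J}$ established in the preceding theorems, and using the adjoint system \eqref{eq:adjoint} to eliminate the linearized state from \eqref{Fdiff:J:wrt:u}.

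First I would fix $u \in \mathcal{U}_{\mathrm{ad}}$ and $\lambda \in (0,1]$ and use the fact that $u_{*} + \lambda(u - u_{*}) = (1-\lambda) u_{*} + \lambda u \in \mathcal{U}_{\mathrm{ad}}$ by convexity. Since $(u_{*}, \Optime)$ is optimal,
\begin{align*}
0 \leq \frac{\mathcal{J}(u_{*} + \lambda(u - u_{*}), \Optime) - \mathcal{J}(u_{*}, \Optime)}{\lambda},
\end{align*}
and passing to the limit $\lambda \to 0^{+}$ using Theorem \ref{thm:Frechet:u} gives $(\der_{u}\mathcal{J}(u_{*}, \Optime))(u - u_{*}) \geq 0$. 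Substituting $w = u - u_{*}$ into \eqref{Fdiff:J:wrt:u} then yields an inequality still containing the linearized variable $\Phi^{w}$; the main task is to replace the three $\Phi^{w}$-terms by $-\alpha \int_{0}^{\Optime}\!\int_{\Omega} h(\varphi_{*}) p w \dx \dt$.

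To do so I would test the linearized equations \eqref{weak:Linearized:varphi}--\eqref{weak:Linearized:sigma} with $p$, $-q$ and $r$ respectively, test the adjoint equations \eqref{adjoint:weak:p}--\eqref{adjoint:weak:r} with $\Phi^{w}$, $-\Xi^{w}$ and $\Sigma^{w}$, add the six identities and integrate over $(0, \Optime)$. The interior terms involving $A\Psi''(\varphi_{*})$, $B\Laplace$, $\PP h(\varphi_{*})$, $\CC h(\varphi_{*})$, $\CC h'(\varphi_{*})\sigma_{*}$ and $h'(\varphi_{*})(\PP\sigma_{*}-\AA-\alpha u_{*})$ cancel pairwise by construction of the adjoint, leaving only the time boundary contributions and the driving terms. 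Using the initial conditions $\Phi^{w}(0) = \Sigma^{w}(0) = 0$ and the terminal conditions $p(\Optime) = r(\Optime) = 0$, the boundary contributions vanish, and what remains is exactly
\begin{align*}
\beta_{Q}\!\int_{0}^{\Optime}\!\!\int_{\Omega}\!(\varphi_{*}-\varphi_{Q})\Phi^{w} + \tfrac{1}{2r}\chi_{(\Optime-r,\Optime)}\!\left(2\beta_{\Omega}(\varphi_{*}-\varphi_{\Omega})+\beta_{S}\right)\!\Phi^{w} \dx \dt = -\alpha\!\int_{0}^{\Optime}\!\!\int_{\Omega}\! h(\varphi_{*})p w \dx \dt.
\end{align*}
Substituting this identity into \eqref{Fdiff:J:wrt:u} collapses $(\der_{u}\mathcal{J}(u_{*},\Optime))(u-u_{*})$ to precisely the left-hand side of \eqref{FONC:u}, and the variational inequality follows.

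For \eqref{FONC:tau} I would argue analogously using the convexity of $[0,T]$: for any $\tau \in [0,T]$ and $\lambda \in (0,1]$, $\Optime + \lambda(\tau - \Optime) \in [0,T]$, and minimality together with the Fr\'{e}chet differentiability in $\tau$ stated in the previous theorem gives $\der_{\tau}\mathcal{J}(u_{*}, \Optime)(\tau - \Optime) \geq 0$ for every $\tau \in [0,T]$. Distinguishing the three cases $\Optime = 0$, $\Optime \in (0,T)$ and $\Optime = T$ based on the admissible signs of $\tau - \Optime$ immediately yields the complementarity condition \eqref{FONC:tau} once the explicit formula for $\der_{\tau}\mathcal{J}(u_{*}, \Optime)$ from the preceding theorem is inserted.

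The main obstacle is the adjoint bookkeeping in the second step: the duality pairings $\inner{\pd_{t}\Phi^{w}}{p}_{H^{1}}$ and $\inner{-\pd_{t}p}{\Phi^{w}}_{H^{2}}$ must be handled in a way consistent with the integration-by-parts in time, given the different regularity classes ($p \in H^{1}(0,\Optime;(H^{2})^{*})$ vs.\ $\Phi^{w} \in H^{1}(0,T;(H^{1})^{*})$); a density argument via the Galerkin approximations used to construct the adjoint will be needed to justify the cancellations rigorously. All other ingredients are direct applications of the results established earlier in the paper.
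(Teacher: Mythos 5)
Your overall strategy is the paper's: the convexity/difference-quotient argument giving $(\der_{u}\mathcal{J}(u_{*},\Optime))(u-u_{*})\geq 0$, the duality between the linearized system and the adjoint system to eliminate $\Phi^{w}$ from \eqref{Fdiff:J:wrt:u}, and the one-sided variations in $\tau$ with the three-case analysis for \eqref{FONC:tau} all coincide with what the paper does (your remarks on the duality pairings and on $\Phi^{w}\in L^{2}(0,T;H^{3})$ are consistent with the paper's \eqref{Timederivative:pPhi}--\eqref{Timederivative:rSigma} and \eqref{Linearized:Xi:q}). However, the central algebraic step as you state it would fail: if you test the linearized equations with $(p,-q,r)$, the adjoint equations with $(\Phi^{w},-\Xi^{w},\Sigma^{w})$, and \emph{add} the six identities, the interior terms do not cancel pairwise. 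Cancellation requires opposite signs between the two families, and with your sign pattern the terms $\nabla r\cdot\nabla\Sigma^{w}$, $\BB r\Sigma^{w}$, $\CC h(\varphi_{*})r\Sigma^{w}$, $q\Xi^{w}$, $h'(\varphi_{*})(\PP\sigma_{*}-\AA-\alpha u_{*})p\Phi^{w}$, $\CC h'(\varphi_{*})\sigma_{*}\Phi^{w} r$ and $\PP h(\varphi_{*})p\Sigma^{w}$ all appear with the \emph{same} sign in both families and therefore double rather than drop out; moreover the time terms $\inner{\pd_{t}\Phi^{w}}{p}_{H^{1}}+\inner{-\pd_{t}p}{\Phi^{w}}_{H^{2}}$ and $\pd_{t}\Sigma^{w}r-\pd_{t}r\,\Sigma^{w}$ are not total time derivatives, so they do not reduce to boundary contributions. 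The resulting identity is true but tautological — it does not by itself yield the claimed relation between the $\Phi^{w}$-terms and $-\alpha\int_{0}^{\Optime}\int_{\Omega}h(\varphi_{*})p\,w\dx\dt$.

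The correct bookkeeping (which is what the paper carries out) is to pair with uniform signs and take the \emph{difference}: transpose the time derivatives using $\Phi^{w}(0)=\Sigma^{w}(0)=0$ and $p(\Optime)=r(\Optime)=0$ as in \eqref{Timederivative:pPhi}, \eqref{Timederivative:rSigma}, then compare the adjoint equations tested with $(\Phi^{w},\Xi^{w},\Sigma^{w})$ against the linearized equations tested with $(p,q,r)$ — equivalently, add the three linearized identities and subtract the three adjoint ones. The $(\sigma,r)$-pair then yields \eqref{FONC:sigma:r:relation}, and the $(\varphi,p)$/$(\mu,q)$-pairs give
\begin{equation*}
\int_{0}^{\Optime}\!\!\int_{\Omega}\Big(\beta_{Q}(\varphi_{*}-\varphi_{Q})+\tfrac{1}{2r}\chi_{(\Optime-r,\Optime)}\big(2\beta_{\Omega}(\varphi_{*}-\varphi_{\Omega})+\beta_{S}\big)\Big)\Phi^{w}\dx\dt=-\alpha\int_{0}^{\Optime}\!\!\int_{\Omega}h(\varphi_{*})\,p\,w\dx\dt,
\end{equation*}
which, inserted into \eqref{Fdiff:J:wrt:u}, gives \eqref{FONC:u}. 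Note also that testing the linearized $\mu$-equation against $q$ must be done in the integrated-by-parts form with $Bq\Laplace\Phi^{w}$ (the paper's \eqref{Linearized:Xi:q}), since $q$ only lies in $L^{2}(0,\Optime;L^{2})$; this is legitimate precisely because $\Phi^{w}\in L^{2}(0,T;H^{3})$ with $\pdnu\Phi^{w}=0$. With the signs corrected, the rest of your argument, including the derivation of \eqref{FONC:tau}, goes through exactly as in the paper.
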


\begin{remark}
If we extend $p$ by zero to $(\Optime, T]$, then we can express \eqref{FONC:u} as
\begin{align*}
\int_{0}^{T} \int_{\Omega} (\beta_{u} u_{*} - h(\varphi_{*}) \alpha p)(u - u_{*}) \dx \dt \geq 0 \quad \forall u \in \mathcal{U}_{\mathrm{ad}},
\end{align*}
which allows for the interpretation that the optimal control $u_{*}$ is the $L^{2}(Q)$-projection of $\beta_{u}^{-1} h(\varphi_{*}) \alpha p$ onto $\mathcal{U}_{\mathrm{ad}}$.
\end{remark}

\section{Results on the state equations}\label{sec:stateequations}
We show the existence of weak solutions to the state equations \eqref{eq:state} by means of a fixed point argument.  The idea is to consider the following two auxiliary problems.  Let $\phi$ be given, we define the solution mapping $\mathcal{M}_{1}$ by $\sigma = \mathcal{M}_{1}(\phi)$, where $\sigma$ is the unique solution to
\begin{equation}\tag{$\mathrm{AP}$1}\label{auxiliary:nutrient}
\begin{alignedat}{3}
\pd_{t}\sigma& = \Laplace \sigma - \CC h(\phi) \sigma + \BB (\sigma_{S} - \sigma) && \text{ in } Q,
\end{alignedat}
\end{equation}
with homogeneous Neumann boundary condition and initial condition $\sigma_{0}$.  Then, we define the solution mapping $\mathcal{M}$ by $\varphi = \mathcal{M}(\phi)$, where $\varphi$ is the unique solution to
\begin{equation}\tag{$\mathrm{AP}$2}\label{auxiliary:CH}
\begin{alignedat}{3}
\pd_{t}\varphi & = \Laplace \mu + h(\varphi)( \PP \mathcal{M}_{1}(\phi) - \AA - \alpha u) && \text{ in } Q, \\
\mu & = A \Psi'(\varphi) - B \Laplace \varphi && \text{ in } Q, 
\end{alignedat}
\end{equation}
with homogeneous Neumann boundary conditions and initial condition $\varphi_{0}$.  If $\tilde{\varphi}$ is a fixed point for $\mathcal{M}$, with $\tilde{\sigma} = \mathcal{M}_{1}(\tilde{\varphi})$ and $\tilde{\mu} = A \Psi'(\tilde{\varphi}) - B \Laplace \tilde{\varphi}$, then the triplet $(\tilde{\varphi}, \tilde{\mu}, \tilde{\sigma})$ is a solution to \eqref{eq:state}.

\subsection{Auxiliary problems}
\begin{lemma}\label{lem:auxProb1:nutrient}
Let $\phi \in L^{2}(Q)$ be given.  Under Assumption \ref{assump:main}, there exists a unique solution
\begin{align*}
\sigma \in L^{2}(0,T;H^{2}) \cap L^{\infty}(0,T;H^{1}) \cap H^{1}(0,T;L^{2})
\end{align*}
to \eqref{auxiliary:nutrient} such that $\sigma(0) = \sigma_{0}$ and $0 \leq \sigma \leq 1$ a.e. in $Q$.  Furthermore there exists a positive constant $C_{\mathrm{AP}1}$ not depending on $\phi$ such that
\begin{align}\label{nutrientestimates}
\norm{\sigma}_{L^{2}(0,T;H^{2}) \cap L^{\infty}(0,T;H^{1}) \cap H^{1}(0,T;L^{2})} \leq C_{\mathrm{AP}1}.
\end{align}
\end{lemma}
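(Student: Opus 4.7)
Since $h$ takes values in $[0,1]$ by Assumption \ref{assump:main}, the equation \eqref{auxiliary:nutrient} rewrites as the linear parabolic problem
\begin{align*}
\pd_{t}\sigma - \Laplace \sigma + c \sigma = \BB \sigma_{S}, \qquad c := \CC h(\phi) + \BB,
\end{align*}
with $0 \leq c \leq \CC + \BB$ in $L^{\infty}(Q)$ and $\BB \sigma_{S} \in L^{\infty}(Q)$, both bounded \emph{independently of $\phi$}. My plan is a classical Faedo--Galerkin construction for existence and regularity, a weak comparison argument for the pointwise bounds $0 \leq \sigma \leq 1$, and a standard energy estimate for uniqueness. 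The key bookkeeping point throughout is that every constant must depend only on $\BB, \CC, T, \norm{\sigma_{0}}_{H^{1}}$, which is crucial for the forthcoming fixed-point argument for the full system.

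\textbf{Galerkin approximation and a priori estimates.} I would use the basis $\{w_{k}\}_{k \in \N}$ of Neumann-Laplacian eigenfunctions, so that the Galerkin problem at level $n$ is a finite-dimensional linear ODE system with bounded coefficients; Carath\'eodory theory gives a unique approximation $\sigma_{n}$ on $[0,T]$. Testing with $\sigma_{n}$ (using $c \geq 0$) together with Young's inequality and Gronwall \eqref{Gronwall} yields a uniform $L^{\infty}(0,T;L^{2}) \cap L^{2}(0,T;H^{1})$ bound. Testing next with $-\Laplace \sigma_{n}$, admissible because the basis diagonalizes $-\Laplace$, gives
\begin{align*}
\tfrac{1}{2}\tfrac{\mathrm{d}}{\mathrm{d} t}\norm{\nabla \sigma_{n}}_{L^{2}}^{2} + \norm{\Laplace \sigma_{n}}_{L^{2}}^{2} \leq \norm{c \sigma_{n} - \BB \sigma_{S}}_{L^{2}} \norm{\Laplace \sigma_{n}}_{L^{2}},
\end{align*}
which after Young's inequality and the previous estimate yields a uniform $L^{\infty}(0,T;H^{1}) \cap L^{2}(0,T;H^{2})$ bound, using $\sigma_{0} \in H^{1}$. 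The equation itself then bounds $\pd_{t}\sigma_{n}$ in $L^{2}(0,T;L^{2})$. Because $c$ is a fixed $L^{\infty}$-function (independent of $n$), weak compactness alone suffices to pass to the limit in every term---no Aubin--Lions is needed for the product $c \sigma_{n}$---yielding a limit $\sigma$ in the claimed regularity class and the estimate \eqref{nutrientestimates}.

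\textbf{Pointwise bounds and uniqueness.} For the upper bound, I would test the limit equation with $(\sigma - 1)^{+} \in L^{2}(0,T;H^{1})$ to obtain
\begin{align*}
\tfrac{1}{2}\tfrac{\mathrm{d}}{\mathrm{d} t} \norm{(\sigma-1)^{+}}_{L^{2}}^{2} + \norm{\nabla (\sigma-1)^{+}}_{L^{2}}^{2} + \int_{\Omega}\bigl(\CC h(\phi) \sigma + \BB(\sigma - \sigma_{S})\bigr)(\sigma-1)^{+} \dx = 0.
\end{align*}
On the set $\{\sigma > 1\}$ both $\sigma$ and $\sigma - \sigma_{S}$ are positive (using $\sigma_{S} \leq 1$), so the integral is nonnegative and, since $(\sigma_{0}-1)^{+} \equiv 0$, integrating in time forces $\sigma \leq 1$. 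The lower bound $\sigma \geq 0$ follows symmetrically by testing with $-\sigma^{-}$ and using $\sigma_{S} \geq 0$. Uniqueness is immediate: the difference of two solutions satisfies the homogeneous linear equation with zero initial datum, and the same $L^{2}$-energy estimate forces it to vanish. The only nonroutine aspect of the argument is the careful bookkeeping to ensure $C_{\mathrm{AP}1}$ is genuinely $\phi$-independent, but this is effortless since $\phi$ enters only through the bounded composition $h(\phi)$.
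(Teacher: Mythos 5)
Your proposal is correct and follows essentially the same route as the paper: Galerkin approximation with energy estimates, the weak comparison argument with $(\sigma-1)^{+}$ and $-\sigma^{-}$ performed on the limit equation (exactly as the paper does, since the comparison principle is not available at the Galerkin level), and uniqueness via the $L^{2}$-energy estimate for the difference. The only variation is cosmetic: you obtain the higher regularity by testing with $-\Laplace \sigma_{n}$ and then read $\pd_{t}\sigma_{n} \in L^{2}(0,T;L^{2})$ off the equation, whereas the paper tests with $\pd_{t}\sigma$ and then recovers $\sigma \in L^{2}(0,T;H^{2})$ by elliptic regularity; both yield the same $\phi$-independent bound \eqref{nutrientestimates}.
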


\begin{proof}
As \eqref{auxiliary:nutrient} is a linear parabolic equation in $\sigma$, the existence of weak solutions can be shown using a Galerkin approximation, and we will only present the derivation of a priori estimates here.  The weak formulation of \eqref{auxiliary:nutrient} is
\begin{align}\label{nutrient:comparison}
\int_{\Omega} \pd_{t} \sigma\zeta +\nabla \sigma \cdot \nabla \zeta + \CC h(\phi) \sigma \zeta + \BB \sigma \zeta - \BB \sigma_{S} \zeta \dx = 0,
\end{align}
for a.e. $t \in (0,T)$ and for all $\zeta \in H^{1}$.  

\paragraph{First estimate.} Substituting $\zeta = \sigma$ in \eqref{nutrient:comparison} yields
\begin{align*}
\frac{1}{2} \frac{\dd}{\dt} \norm{\sigma}_{L^{2}}^{2} + \norm{\nabla \sigma}_{L^{2}}^{2} + \int_{\Omega} \CC h(\phi) \abs{\sigma}^{2} + \BB \abs{\sigma}^{2} \dx \leq \frac{\BB^{2}}{2} \norm{\sigma_{S}}_{L^{2}}^{2} + \frac{1}{2} \norm{\sigma}_{L^{2}}^{2}.
\end{align*}
Neglecting the nonnegative term $\CC h(\phi)\abs{\sigma}^{2} + \BB \abs{\sigma}^{2}$, and the application of the Gronwall inequality leads to
\begin{align}\label{apriori:nutrient:LinftyL2}
\norm{\sigma}_{L^{\infty}(0,T;L^{2})}^{2} + \norm{\nabla \sigma}_{L^{2}(0,T;L^{2})}^{2} \leq C \left (T, \norm{\sigma_{S}}_{L^{2}(Q)}, \norm{\sigma_{0}}_{L^{2}} \right ).
\end{align}

\paragraph{Second estimate.} Substituting $\zeta = \pd_{t} \sigma$ in \eqref{nutrient:comparison} yields
\begin{align*}
\norm{\pd_{t} \sigma}_{L^{2}}^{2} + \frac{1}{2} \frac{\dd}{\dt} \norm{\nabla \sigma}_{L^{2}}^{2} \leq \frac{1}{2} \left ((\CC + \BB) \norm{\sigma}_{L^{2}} + \BB \norm{\sigma_{S}}_{L^{2}} \right )^{2} + \frac{1}{2} \norm{\pd_{t}\sigma}_{L^{2}}^{2},
\end{align*}
where we used the boundedness of $h$.  Integrating in time and using that $\sigma_{0} \in H^{1}$, $\sigma_{S} \in L^{2}(Q)$ we have
\begin{align}\label{apriori:nutrient:H1L2}
\norm{\pd_{t} \sigma}_{L^{2}(Q)}^{2} + \norm{\nabla \sigma}_{L^{\infty}(0,T;L^{2})}^{2} \leq C \left (T, \CC, \BB, \norm{\sigma_{S}}_{L^{2}(Q)}, \norm{\sigma_{0}}_{H^{1}} \right ).
\end{align}

\paragraph{Third estimate.} Note that \eqref{nutrient:comparison} can be seen as a weak formulation for the following elliptic problem,
\begin{alignat}{3}\label{nutrient:elliptic}
-\Laplace \sigma + \sigma & = -\pd_{t}\sigma - \CC h(\phi) \sigma + \BB(\sigma_{S}  - \sigma) + \sigma && \text{ in } \Omega, \\
\notag \pdnu \sigma &= 0 && \text{ on } \Gamma.
\end{alignat}
As the right-hand side of \eqref{nutrient:elliptic} belongs to $L^{2}$ for a.e. $t \in (0,T)$, elliptic regularity theory \cite[Theorem 2.4.2.7]{book:Grisvard} yields that $\sigma(t) \in H^{2}$ for a.e. $t \in (0,T)$, with the estimate
\begin{align*}
\norm{\sigma}_{H^{2}}^{2} \leq C \left ( \norm{\pd_{t} \sigma}_{L^{2}}^{2} + \norm{\sigma}_{L^{2}}^{2} + \norm{\sigma_{S}}_{L^{2}}^{2} \right ),
\end{align*}
where $C$ is a positive constant not depending on $\sigma$ and $\phi$.  Integrating in time gives
\begin{align}\label{apriori:nutrient:L2H2}
\norm{\sigma}_{L^{2}(0,T;H^{2})}^{2} \leq C \left ( \norm{\pd_{t} \sigma}_{L^{2}(Q)}^{2} + \norm{\sigma}_{L^{2}(Q)}^{2} + \norm{\sigma_{S}}_{L^{2}(Q)}^{2} \right ).
\end{align}
The a priori estimates \eqref{apriori:nutrient:LinftyL2}, \eqref{apriori:nutrient:H1L2} and \eqref{apriori:nutrient:L2H2} are sufficient to deduce the existence of a weak solution $\sigma$ satisfying \eqref{nutrient:comparison}.  The initial condition is attained by the use of the continuous embedding $L^{2}(0,T;H^{1}) \cap H^{1}(0,T;L^{2}) \subset C^{0}([0,T];L^{2})$, and using weak lower semicontinuity of the norms, we obtain \eqref{nutrientestimates}.  We now establish the boundedness property and continuous dependence on the data $\phi$.

\paragraph{Boundedness.} Substituting $\zeta = -\sigma^{-} := -\max (-\sigma, 0)$ in \eqref{nutrient:comparison} leads to
\begin{align}\label{bdd:lower}
\frac{1}{2} \frac{\dd}{\dt} \norm{\sigma^{-}}_{L^{2}}^{2} + \int_{\Omega} \abs{\nabla \sigma^{-}}^{2} + \CC h(\phi) \abs{\sigma^{-}}^{2} + \BB \abs{\sigma^{-}}^{2} + \BB \sigma_{S} \sigma^{-} \dx = 0.
\end{align}
As the integrand is nonnegative, we neglect the second term on the left-hand side and upon integrating yields
\begin{align*}
\norm{\sigma^{-}(t)}_{L^{2}}^{2} \leq \norm{\sigma^{-}(0)}_{L^{2}}^{2} = 0 \quad \forall t \in (0,T],
\end{align*}
where we used that $\sigma_{0} \geq 0$ a.e. in $\Omega$, and so $\sigma^{-}(0) = 0$ a.e. in $\Omega$.  Thus $\sigma \geq 0$ a.e. in $Q$.  On the other hand, consider $\zeta = (\sigma - 1)^{+} = \max (\sigma - 1, 0)$ in \eqref{nutrient:comparison}, which yields
\begin{equation}\label{bdd:upper}
\begin{aligned}
& \frac{1}{2} \frac{\dd}{\dt} \norm{(\sigma - 1)^{+}}_{L^{2}}^{2} + \int_{\Omega}  \abs{\nabla (\sigma - 1)^{+}}^{2} + \left (\CC h(\phi) + \BB \right ) \abs{(\sigma - 1)^{+}}^{2} \dx \\
& \quad + \int_{\Omega} \CC h(\phi) (\sigma - 1)^{+} +  \BB (1 - \sigma_{S})(\sigma - 1)^{+} \dx = 0.
\end{aligned}
\end{equation}
Using that $h$ is nonnegative and $\sigma_{S} \leq 1$ a.e. in $Q$, so that $(1-\sigma_{S})(\sigma - 1)^{+}$ is nonnegative, we find that the integrand is nonnegative.  Thus, after integrating from $0$ to $t$, we obtain
\begin{align*}
\norm{(\sigma - 1)^{+}(t)}_{L^{2}}^{2} \leq \norm{(\sigma - 1)^{+}(0)}_{L^{2}}^{2} = 0 \quad \forall t \in (0,T],
\end{align*}
where we used that $\sigma_{0} \leq 1$ a.e. in $\Omega$.  This implies that $\sigma \leq 1$ a.e. in $Q$.

\paragraph{Continuous dependence.} Let $\{\sigma_{i}\}_{i = 1,2}$ denote two functions satisfying \eqref{nutrient:comparison} corresponding to $\{\phi_{i}\}_{i = 1,2} \subset L^{2}(Q)$, respectively, and with the same initial condition $\sigma_{0}$ and nutrient supply $\sigma_{S}$.  Then the difference $\sigma := \sigma_{1} - \sigma_{2}$ satisfies
\begin{align}\label{sigma:difference}
\int_{\Omega} \pd_{t} \sigma \zeta + \nabla \sigma \cdot \nabla \zeta + \left ( \CC (h(\phi_{1}) - h(\phi_{2})) \sigma_{1} + \CC h(\phi_{2}) \sigma \right ) \zeta + \BB \sigma \zeta \dx = 0
\end{align}
for a.e. $t \in (0,T)$ and for all $\zeta \in H^{1}$.  Substituting $\zeta = \sigma$ in \eqref{sigma:difference}, neglecting the nonnegative term $\CC h(\phi_{2}) \abs{\sigma}^{2} + \BB \abs{\sigma}^{2}$, and integrate over $[0,s]$ for $s \in (0,T]$, we obtain
\begin{align*}
\frac{1}{2} \norm{\sigma(s)}_{L^{2}}^{2} + \norm{\nabla \sigma}_{L^{2}(0,s;L^{2})}^{2} \leq \frac{(\CC L_{h})^{2}}{2} \norm{\phi_{1} - \phi_{2}}_{L^{2}(0,s;L^{2})}^{2} + \frac{1}{2} \norm{\sigma}_{L^{2}(0,s;L^{2})}^{2},
\end{align*}
where we have used the boundedness of $\sigma_{1}$ and the Lipschitz property of $h$.  Applying Gronwall's inequality \eqref{Gronwall} yields
\begin{align}\label{auxProb1:ctsdep:est1}
\norm{\sigma(s)}_{L^{2}}^{2} + 2 \norm{\nabla \sigma}_{L^{2}(0,s;L^{2})}^{2} \leq (\CC L_{h})^{2} \norm{\phi_{1} - \phi_{2}}_{L^{2}(0,s;L^{2})}^{2} e^{s} \text{ for } s \in (0,T],
\end{align}
where we used that 
\begin{align*}
\int_{0}^{s} \norm{\phi_{1} - \phi_{2}}_{L^{2}(0,t;L^{2})}^{2} e^{t} \dt \leq \norm{\phi_{1} - \phi_{2}}_{L^{2}(0,s;L^{2})}^{2} (e^{s} - 1).
\end{align*}
Next, substituting $\zeta = \pd_{t} \sigma$ in \eqref{sigma:difference} and integrate over $[0,s]$ leads to
\begin{align*}
& \norm{\pd_{t}\sigma}_{L^{2}(0,s;L^{2})}^{2} + \frac{1}{2} \norm{\nabla \sigma(s)}_{L^{2}}^{2} + \frac{\BB}{2} \norm{\sigma(s)}_{L^{2}}^{2} \\
& \quad \leq (\CC L_{h})^{2} \norm{\phi_{1} - \phi_{2}}_{L^{2}(0,s;L^{2})}^{2} + \CC^{2} \norm{\sigma}_{L^{2}(0,s;L^{2})}^{2} + \frac{2}{4} \norm{\pd_{t} \sigma}_{L^{2}(0,s;L^{2})}^{2}.
\end{align*}
From \eqref{auxProb1:ctsdep:est1} we have
\begin{equation}\label{auxProb1:ctsdep:sigmaL2L2}
\begin{aligned}
\norm{\sigma}_{L^{2}(0,s;L^{2})}^{2} = \int_{0}^{s} \norm{\sigma(t)}_{L^{2}}^{2} \dt &  \leq (\CC L_{h})^{2} \int_{0}^{s} \norm{\phi_{1} - \phi_{2}}_{L^{2}(0,t;L^{2})}^{2} e^{t} \dt \\
& \leq (\CC L_{h})^{2} (e^{s} - 1) \norm{\phi_{1} - \phi_{2}}_{L^{2}(0,s;L^{2})}^{2},
\end{aligned}
\end{equation}
and so this yields
\begin{align}\label{auxProb1:ctsdep:est2}
\norm{\pd_{t}\sigma}_{L^{2}(0,s;L^{2})}^{2} + \norm{\nabla \sigma(s)}_{L^{2}}^{2}  \leq (\CC L_{h})^{2} (1 + \CC^{2}(e^{s} - 1)) \norm{\phi_{1} - \phi_{2}}_{L^{2}(0,s;L^{2})}^{2}.
\end{align}

\end{proof}

\begin{remark}
The main reason we do not employ a Galerkin approximation for the state equation \eqref{eq:state} is that the computations in the weak comparison principle seems not to apply to the Galerkin solutions, in particular we cannot show that the Galerkin solutions to $\sigma$ is nonnegative and bounded above by $1$.  Indeed, for Galerkin solutions $\varphi_{n}$ and $\sigma_{n}$ belong to some finite dimensional subspace $W_{n}$ of $H^{1}$ satisfying
\begin{align*}
0 = \int_{\Omega} \pd_{t} \sigma_{n} v  + \nabla \sigma_{n} \cdot \nabla v + \CC h(\varphi_{n}) \sigma_{n} v + \BB \sigma_{n} - \BB \sigma_{S} v \dx
\end{align*}
for all $v \in W_{n}$, if we test with $v = \Pi_{n}(\sigma_{n}^{-}) \in W_{n}$ where $\Pi_{n}$ denotes the orthogonal projection to $W_{n}$, we have
\begin{align*}
\int_{\Omega} \pd_{t} \sigma_{n} \Pi_{n} (\sigma_{n}^{-}) \dx = \int_{\Omega} \pd_{t} \sigma_{n} \sigma_{n}^{-} \dx = - \frac{\dd}{\dt} \norm{\sigma_{n}^{-}}_{L^{2}}^{2},
\end{align*}
but we cannot deduce if
\begin{align*}
\int_{\Omega} h(\varphi_{n}) \sigma_{n} \Pi_{n}(\sigma_{n}^{-}) \dx = \int_{\Omega} \Pi_{n} (h(\varphi_{n}) \sigma_{n}) \sigma_{n}^{-} \dx
\end{align*}
is nonnegative.  There is also a similar issue with the nonnegativity of
\begin{align*}
\int_{\Omega} \CC h(\varphi_{n}) (\sigma_{n} - 1)\Pi_{n}((\sigma_{n}-1)^{+}) + \CC h(\varphi_{n}) \Pi_{n}(\sigma_{n}-1)^{+}) + \BB (1-\sigma_{S}) \Pi_{n}((\sigma_{n}-1)^{+}) \dx,
\end{align*}
as it is not guaranteed that the projection of a nonnegative function is nonnegative.  
\end{remark}
Due to the estimate \eqref{auxProb1:ctsdep:sigmaL2L2}, we can define a continuous mapping
\begin{alignat*}{4}
\mathcal{M}_{1} & : L^{2}(Q) && \to L^{\infty}(Q) \cap L^{2}(0,T;H^{2}) \cap L^{\infty}(0,T;H^{1}) \cap H^{1}(0,T;L^{2}) \\
& \quad \quad \phi && \mapsto \quad \quad  \sigma \text{ given by Lemma } \ref{lem:auxProb1:nutrient},
\end{alignat*}

\begin{lemma}\label{lem:auxCahnHilliard}
Let $\phi \in L^{2}(Q)$ be given.  Under Assumption \ref{assump:main}, there exists a unique solution pair 
\begin{align*}
\varphi & \in L^{\infty}(0,T;H^{2}) \cap L^{2}(0,T;H^{3}) \cap H^{1}(0,T;L^{2}) \cap C^{0}(\overline{Q}), \\
\mu & \in L^{2}(0,T;H^{2}) \cap L^{\infty}(0,T;L^{2}), 
\end{align*}
to \eqref{auxiliary:CH} such that $\varphi(0) = \varphi_{0}$ and satisfy
\begin{subequations}\label{auxProb2:weakform}
\begin{align}
0 & = \int_{\Omega} \pd_{t}\varphi \zeta + \nabla \mu \cdot \nabla \zeta - (\PP \mathcal{M}_{1}(\phi) - \AA - \alpha u)h(\varphi) \zeta \dx, \\
0 & = \int_{\Omega} \mu \zeta - A \Psi'(\varphi) \zeta - B \nabla \varphi \cdot \nabla \zeta \dx, 
\end{align}
\end{subequations}
for a.e. $t \in (0,T)$ and for all $\zeta \in H^{1}$.  Furthermore, there exists a positive constant $C_{\mathrm{AP2}}$ depending only on $T$, $\Omega$, $k_{0}$, $k_{1}$, $k_{2}$, $k_{3}$, $k_{4}$, $A$, $B$, $\PP$, $\AA$, $\alpha$, $\norm{\varphi_{0}}_{H^{3}}$ and $C_{\mathrm{AP}1}$, such that
\begin{equation}\label{auxProb2:Est}
\begin{aligned}
& \norm{\varphi}_{L^{\infty}(0,T;H^{2}) \cap L^{2}(0,T;H^{3}) \cap H^{1}(0,T;L^{2})} + \norm{\mu}_{L^{2}(0,T;H^{2}) \cap L^{\infty}(0,T;L^{2})} \leq C_{\mathrm{AP2}}.
\end{aligned}
\end{equation}
That is, $C_{\mathrm{AP2}}$ does not depend on $\phi$.
\end{lemma}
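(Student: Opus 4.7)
The plan is to mimic the argument of Lemma \ref{lem:auxProb1:nutrient} at a more delicate level: construct solutions via a Faedo–Galerkin scheme in the basis of Neumann–Laplacian eigenfunctions, derive a cascade of $\phi$-independent a priori estimates that exploit the $L^\infty(Q)$ bound $0 \le \mathcal{M}_1(\phi) \le 1$ from Lemma \ref{lem:auxProb1:nutrient}, pass to the limit, and finally close with a uniqueness/continuous-dependence argument.

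First, denote by $\{w_k\}_{k \ge 1}$ the $L^2$-orthonormal basis of eigenfunctions of $-\Delta$ with homogeneous Neumann data, set $W_n = \mathrm{span}\{w_1,\dots,w_n\}$, and seek Galerkin approximations $(\varphi_n,\mu_n) \in C^1([0,T_n];W_n)^2$ with $\varphi_n(0)$ the $H^3$-projection of $\varphi_0$ onto $W_n$ (which remains bounded in $H^3$ because $\varphi_0 \in H^3$ satisfies the compatibility condition $\pdnu \varphi_0 = 0$). Inserting the algebraic relation $\mu_n = P_n(A\Psi'(\varphi_n)) - B\Laplace \varphi_n$ into the Galerkin form of \eqref{state:varphi} reduces the system to a locally Lipschitz ODE for the coefficients of $\varphi_n$, so local-in-time solvability is immediate.

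The energy estimate is obtained by testing the first Galerkin equation with $\mu_n$ and recognising that the second equation gives the chain rule identity
\begin{equation*}
\int_\Omega \pd_t \varphi_n \mu_n \dx = \frac{\dd}{\dt}\int_\Omega A\Psi(\varphi_n) + \tfrac{B}{2} |\nabla \varphi_n|^2 \dx.
\end{equation*}
Using \eqref{PsiLower} and the $L^\infty$ bounds on $h$, $\mathcal{M}_1(\phi)$, $u$, together with a Poincaré–Wirtinger split $\mu_n = (\mu_n - \overline{\mu_n}) + \overline{\mu_n}$ whose mean is controlled through \eqref{Psi'Psi} by $\int_\Omega \Psi(\varphi_n) \dx$, yields a Gronwall-type estimate producing $\varphi_n$ bounded in $L^\infty(0,T;H^1)$, $\Psi(\varphi_n)$ bounded in $L^\infty(0,T;L^1)$, and $\mu_n$ bounded in $L^2(0,T;H^1)$. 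Because $\Psi$ is quartic, the growth \eqref{Psi:growth} together with $H^1 \hookrightarrow L^6$ in $d=3$ gives $\Psi'(\varphi_n) \in L^\infty(0,T;L^2)$; then solving $-B\Laplace \varphi_n = \mu_n - A\Psi'(\varphi_n)$ with Neumann data by elliptic regularity promotes $\varphi_n$ to $L^2(0,T;H^2)$. Testing the first equation with $\pd_t \varphi_n$, integrating by parts against $\nabla \mu_n$ (using the second equation to replace $\nabla \mu_n$ by derivatives of $\varphi_n$) and invoking the Gagliardo–Nirenberg inequality \eqref{GagNirenIneq} to control $\Psi''(\varphi_n)\nabla \varphi_n$, produces bounds for $\pd_t \varphi_n$ in $L^2(0,T;L^2)$ and upgrades $\varphi_n$ to $L^\infty(0,T;H^2)$ and $\mu_n$ to $L^\infty(0,T;L^2)$. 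A final differentiation of the second equation in space, combined with elliptic regularity applied to $-B\Laplace(\nabla \varphi_n)$ tested against $\nabla \mu_n \in L^2(0,T;L^2(\Omega;\R^3))$, delivers $\varphi_n \in L^2(0,T;H^3)$ and $\mu_n \in L^2(0,T;H^2)$. These estimates are $\phi$-independent because $\mathcal{M}_1(\phi)$ enters only through its uniform bounds $0 \le \mathcal{M}_1(\phi) \le 1$.

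Having a uniform-in-$n$ bound matching \eqref{auxProb2:Est}, Banach–Alaoglu yields weak/weak-$*$ convergent subsequences, while Aubin–Lions with $L^\infty(0,T;H^2) \cap H^1(0,T;L^2) \hookrightarrow\hookrightarrow C^0([0,T];H^\beta)$ for $\beta \in (3/2,2)$ gives strong convergence in $C^0(\overline Q)$ through the Sobolev embedding $H^\beta \hookrightarrow C^0(\overline \Omega)$ in $d=3$. This is more than enough to identify the nonlinearities $h(\varphi_n) \to h(\varphi)$ and $\Psi'(\varphi_n) \to \Psi'(\varphi)$ in the limit, so the pair $(\varphi,\mu)$ solves \eqref{auxProb2:weakform}; attainment of $\varphi(0) = \varphi_0$ follows from the continuous embedding into $C^0([0,T];L^2)$. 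For uniqueness, given two solutions $(\varphi_i,\mu_i)$ with the same data, one tests the difference of the first equations with $\mu_1 - \mu_2$ and uses the difference of the second equations to extract the term $\frac{B}{2}\frac{\dd}{\dt}\|\nabla(\varphi_1-\varphi_2)\|_{L^2}^2$, controls the residual cubic term through \eqref{Psi'diff} and the $L^\infty$ bound on $\varphi_i$ coming from $C^0(\overline Q)$, applies Gronwall's inequality \eqref{Gronwall}, and concludes.

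The principal obstacle is the passage from the basic energy estimate to the $L^\infty(0,T;H^2) \cap L^2(0,T;H^3)$ regularity of $\varphi$ required by \eqref{auxProb2:Est}: this needs a careful test with $\pd_t \varphi_n$ where the cubic growth of $\Psi'$ must be tamed by interpolation of $\nabla \varphi_n$ between $L^2$ and higher norms, and it is where the strong hypothesis $\varphi_0 \in H^3$ with $\pdnu \varphi_0 = 0$ is genuinely used to control $\pd_t \varphi_n(0)$ uniformly in $n$. All other steps are routine once this core estimate is in place.
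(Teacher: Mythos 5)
Your overall architecture (Galerkin in the Neumann eigenbasis, the energy estimate via testing with $\mu_n$ and controlling the mean of $\mu_n$ through \eqref{Psi'Psi}, the elliptic bootstrap using \eqref{Psi:growth}, Simon/Aubin--Lions compactness) coincides with the paper's. The clear-cut gap is in the uniqueness step: with your pairing --- the difference of the first equations tested with $\mu_1-\mu_2$, the difference of the second equations used to rewrite $\int_\Omega \pd_t\varphi\,\mu\dx$ --- the left-hand side is $\frac{B}{2}\frac{\dd}{\dt}\norm{\nabla(\varphi_1-\varphi_2)}_{L^2}^2+\norm{\nabla(\mu_1-\mu_2)}_{L^2}^2$, and the residual nonlinear term is $A\int_\Omega(\Psi'(\varphi_1)-\Psi'(\varphi_2))\,\pd_t(\varphi_1-\varphi_2)\dx$. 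This contains the time derivative of the difference, which your left-hand side does not control, so \eqref{Psi'diff} plus the $C^{0}(\overline{Q})$ bound plus Gronwall do not close the estimate as written; moreover, without controlling the spatial mean of $\varphi_1-\varphi_2$ you cannot upgrade $\norm{\nabla(\varphi_1-\varphi_2)}_{L^2}$ to a full norm. The paper instead tests the first difference equation with $B(\varphi_1-\varphi_2)$ and the second with $\mu_1-\mu_2$: the gradient cross terms cancel, leaving $\frac{B}{2}\frac{\dd}{\dt}\norm{\varphi_1-\varphi_2}_{L^2}^2+\norm{\mu_1-\mu_2}_{L^2}^2$ on the left, after which \eqref{Psi'diff} and \eqref{Gronwall} do close. (Your route could be repaired by estimating $\pd_t(\varphi_1-\varphi_2)$ in $(H^1)^*$ through the equation and controlling the mean by testing with $\zeta=1$, but you have not supplied that argument.)

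The second soft spot is the core higher-order estimate. Testing the $\varphi_n$-equation with $\pd_t\varphi_n$ and substituting the constitutive relation gives the good terms $\norm{\pd_t\varphi_n}_{L^2}^2+\frac{B}{2}\frac{\dd}{\dt}\norm{\Laplace\varphi_n}_{L^2}^2$, but also $A\int_\Omega\Psi''(\varphi_n)\nabla\varphi_n\cdot\nabla\pd_t\varphi_n\dx$, in which $\nabla\pd_t\varphi_n$ is uncontrolled; ``Gagliardo--Nirenberg to control $\Psi''(\varphi_n)\nabla\varphi_n$'' does not address this. One must integrate by parts once more (legitimate at the Galerkin level, since $\pdnu\Psi'(\varphi_n)=0$) to obtain $\bigl(\Psi'''(\varphi_n)\abs{\nabla\varphi_n}^2+\Psi''(\varphi_n)\Laplace\varphi_n\bigr)\pd_t\varphi_n$ and then close with \eqref{GagNirenIneq} and a Gronwall coefficient $\norm{\varphi_n}_{H^2}^2\in L^1(0,T)$ from your previous step; this works, but it is exactly the argument missing from the sketch. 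By contrast, the paper differentiates the $\mu_n$-equation in time, tests it with $\mu_n$ and the $\varphi_n$-equation with $B\pd_t\varphi_n$, which needs only $\Psi''(\varphi_n)\in L^\infty(0,T;L^3)$ but requires prescribing $\mu_n(0)=\Pi_n(A\Psi'(\varphi_0)-B\Laplace\varphi_0)$ --- this, not $\pd_t\varphi_n(0)$, is where $\varphi_0\in H^3$ with $\pdnu\varphi_0=0$ enters; in your scheme the relevant initial quantity would be $\norm{\Laplace\Pi_n\varphi_0}_{L^2}$. A final minor point: the ODE right-hand side is only measurable in time through $\mathcal{M}_1(\phi)$ and $u$, so ``locally Lipschitz, hence immediate'' needs either Carath\'eodory theory or, as in the paper, approximation of $\phi$ and $u$ by $C^{0}([0,T];L^{2})$ sequences and the Cauchy--Peano theorem.
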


\begin{proof}
Let $\{w_{i}\}_{i \in \N}$ denote the eigenfunctions of the Neumann-Laplacian with corresponding eigenvalues $\{\lambda_{i}\}_{i \in \N}$:
\begin{align*}
-\Laplace w_{i} = \lambda_{i} w_{i} \text{ in } \Omega, \quad \pdnu w_{i} = 0 \text{ on } \Gamma.
\end{align*}
Then, it is well-known that $\{w_{i}\}_{i \in \N}$ forms an orthonormal basis of $L^{2}$ and an orthogonal basis of $H^{1}$.  As constant functions are eigenfunctions, we take $w_{1} = 1$ with $\lambda_{1} = 0$.  Let $n \in N$ be fixed and we define $W_{n} := \mathrm{span}\{w_{1}, \dots, w_{n}\}$ as the finite dimensional space spanned by the first $n$ eigenfunctions, with the corresponding projection operator $\Pi_{n}$.  We consider sequences $\{\phi_{n}\}_{n \in \N}, \{u_{n}\}_{n \in \N} \subset C^{0}(0,T;L^{2})$ such that $\phi_{n} \to \phi$ and $u_{n} \to u$ strongly in $L^{2}(0,T;L^{2})$ and look for functions of the form
\begin{align*}
\varphi_{n}(x,t) = \sum_{i=1}^{n} a_{n,i}(t) w_{i}(x), \quad \mu_{n}(x,t) := \sum_{i=1}^{n} b_{n,i}(t) w_{i}(x),
\end{align*}
where the coefficients $\bm{a}_{n} := \{a_{n,i}\}_{i=1}^{n}$ and $\bm{b}_{n} := \{b_{n,i}\}_{i=1}^{n}$ satisfy the following initial-value problem
\begin{subequations}\label{auxProb2:ODE}
\begin{align}
\bm{a}_{n}' & = - \bm{S} \bm{b}_{n} + \PP \bm{M}_{n} - \AA \bm{H}_{n} - \alpha \bm{U}_{n}, \label{auxProb2:ODE:varphi} \\
\bm{b}_{n} & =  A \bm{\psi}_{n} + B \bm{S} \bm{a}_{n}, \label{auxProb2:ODE:mu}, \\
\bm{a}_{n}(0) & = (\Pi_{n}\varphi_{0})_{i=1}^{n} = \left ( \int_{\Omega} \varphi_{0} w_{i}  \dx \right )_{i=1}^{n}
\end{align}
\end{subequations}
with prime denoting the time derivative and for $1 \leq i,j \leq n$, 
\begin{equation}\label{ODE:vectormatrix}
\begin{alignedat}{4}
\bm{S}_{ij} & := \int_{\Omega} \nabla w_{i} \cdot \nabla w_{j} \dx, \quad && (\bm{H}_{n})_{j} && := \int_{\Omega} h(\varphi_{n}) w_{j} \dx,\\
(\bm{U}_{n})_{j} & :=  \int_{\Omega} h(\varphi_{n}) u_{n} w_{j} \dx, \quad && (\bm{\psi}_{n})_{j} && := \int_{\Omega} \Psi'(\varphi_{n}) w_{j} \dx, \\
(\bm{M}_{n})_{ij} & :=  \int_{\Omega} h(\varphi_{n}) \mathcal{M}_{1}(\phi_{n}) w_{j} \dx. \quad && &&
\end{alignedat}
\end{equation}
Without loss of generality, we assume that $0 \leq u_{n} \leq 1$ a.e. in $Q$ for all $n \in \N$ and from Lemma \ref{lem:auxProb1:nutrient} it holds that $0 \leq \mathcal{M}_{1}(\phi_{n}) \leq 1$ a.e. in $Q$ and $\mathcal{M}_{1}(\phi_{n}) \in C^{0}([0,T];L^{2})$ for all $n \in \N$.  Substituting \eqref{auxProb2:ODE:mu} into \eqref{auxProb2:ODE:varphi} leads to a system of ODEs in $\bm{a}_{n}$ with right-hand side depending continuously on $t$ and $\bm{a}_{n}$.  By the Cauchy--Peano theorem \cite[Chapter 1, Theorem 1.2]{book:Coddington}, there exists a $t_{n} \in (0,T]$ such that \eqref{auxProb2:ODE} has a local solution $\bm{a}_{n}$ on $[0,t_{n})$ with $\bm{a}_{n} \in C^{1}([0,t_{n});\R^{n})$.  Then, $\bm{b}_{n}$ can be defined by the relation \eqref{auxProb2:ODE:mu}, and we obtain functions $\varphi_{n}, \mu_{n} \in C^{1}([0,t_{n});W_{n})$ satisfying
\begin{subequations}\label{auxProb2:Galerkin}
\begin{align}
\pd_{t}\varphi_{n} & = \Laplace \mu_{n} + \Pi_{n}(h(\varphi_{n})(\PP \mathcal{M}_{1}(\phi_{n}) - \AA - \alpha u_{n})),\label{auxProb2:strong:varphin} \\
\mu_{n} & = A \Pi_{n}(\Psi'(\varphi_{n})) - B \Laplace \varphi_{n}, \label{auxProb2:strong:mun} \\
\varphi_{n}(0) & = \Pi_{n}(\varphi_{0}).
\end{align}
\end{subequations}
In the following we will derive a series of a priori estimates leading to the uniform boundedness (in $n$) of $(\varphi_{n}, \mu_{n})$ in the following Bochner spaces:
\begin{enumerate}
\item $\Psi(\varphi_{n}) \in L^{\infty}(0,T;L^{1})$, $\varphi_{n} \in L^{\infty}(0,T;H^{1})$, $\mu_{n} \in L^{2}(0,T;H^{1})$,
\item $\varphi_{n} \in L^{2}(0,T;H^{3})$,
\item $\mu_{n} \in L^{\infty}(0,T;L^{2}) \cap L^{2}(0,T;H^{2})$, $\varphi_{n} \in L^{\infty}(0,T;H^{2})$, $\pd_{t}\varphi_{n} \in L^{2}(0,T;L^{2})$.
\end{enumerate}
In particular for the third estimate, we have to differentiate \eqref{auxProb2:Galerkin} in time to obtain a system of ODEs involving $\pd_{t}\mu_{n}$.  Thus, we prescribe additional initial conditions, namely we set
\begin{align*}
\mu_{0} := A \Psi'(\varphi_{0}) - B \Laplace \varphi_{0}, \quad \mu_{n}(0) := \Pi_{n} (\mu_{0}).
\end{align*}
Note that by Assumption \ref{assump:main}, there exists a positive constant $C_{\mathrm{ini}}$, not depending on $\phi$ and $n$, such that
\begin{align*}
\norm{\mu_{n}(0)}_{L^{2}} \leq \norm{\mu_{0}}_{L^{2}} \leq C_{\mathrm{ini}} \norm{\varphi_{0}}_{H^{3}}.
\end{align*}
Furthermore, to approximate $\varphi_{0}$ by a linear combination of eigenfunctions of the Neumann-Laplacian in $H^{2}$, we require that $\varphi_{0}$ satisfies zero Neumann boundary conditions.
\paragraph{First estimate.}  Multiplying \eqref{auxProb2:strong:varphin} with $\mu_{n}$ and \eqref{auxProb2:strong:mun} with $\pd_{t}\varphi_{n}$, integrate over $\Omega$ and integrate by parts, upon adding and using the boundedness of $h$, $\mathcal{M}_{1}(\phi_{n})$ and $u_{n}$, we obtain
\begin{equation}\label{auxProb2:first:estimate}
\begin{aligned}
& \frac{\dd}{\dt} \left ( A \norm{\Psi(\varphi_{n})}_{L^{1}} + \frac{B}{2} \norm{\nabla \varphi_{n}}_{L^{2}}^{2} \right ) \dx + \norm{\nabla \mu_{n}}_{L^{2}}^{2} \leq ( \PP + \AA + \alpha) \norm{\mu_{n}}_{L^{1}} .
\end{aligned}
\end{equation}
Let $C_{u} := \PP + \AA + \alpha$, then by the Poincar\'{e} inequality in $L^{1}$ (with constant $C_{p} > 0$ depending only on $\Omega$), H\"{o}lder's inequality and Young's inequality, the right-hand side of \eqref{auxProb2:first:estimate} can be estimated as follows,
\begin{equation}\label{sourceterm:est}
\begin{aligned}
& C_{u} \norm{\mu_{n}}_{L^{1}} \leq  C_{u} \bignorm{\mu_{n} - \frac{1}{\abs{\Omega}} \int_{\Omega} \mu_{n} \dx}_{L^{1}} + C_{u} \abs{\int_{\Omega} \mu_{n} \dx} \\
& \quad \leq C_{u} C_{p} \abs{\Omega}^{\frac{1}{2}} \norm{\nabla \mu_{n}}_{L^{2}} + C_{u} \abs{\int_{\Omega} \mu_{n} \dx} \leq \frac{1}{2} \norm{\nabla \mu_{n}}_{L^{2}}^{2} + \frac{C_{u}^{2} C_{p}^{2} \abs{\Omega}}{2} + C_{u} \abs{\int_{\Omega} \mu_{n} \dx}.
\end{aligned}
\end{equation}
From integrating \eqref{auxProb2:strong:mun} over $\Omega$, and \eqref{Psi'Psi}, we find that
\begin{align}\label{mean:mu}
\abs{\int_{\Omega} \mu_{n} \dx} \leq A \norm{\Psi'(\varphi_{n})}_{L^{1}} \leq Ak_{0} \int_{\Omega} \Psi(\varphi_{n}) \dx + A k_{1} \abs{\Omega}.
\end{align}
Hence, we obtain the following differential inequality
\begin{align*}
& \frac{\dd}{\dt} \left ( A \norm{\Psi(\varphi_{n})}_{L^{1}} + \frac{B}{2} \norm{\nabla \varphi_{n}}_{L^{2}}^{2} \right )  - A k_{0} C_{u} \norm{\Psi(\varphi_{n})}_{L^{1}} + \frac{1}{2} \norm{\nabla \mu_{n}}_{L^{2}}^{2} \\
& \quad \leq A k_{1} \abs{\Omega} C_{u} + \frac{C_{u}^{2} C_{p}^{2} \abs{\Omega}}{2} =: d_{0} .
\end{align*}
By the Sobolev embedding $H^{1} \subset L^{6}$ and the growth assumption \eqref{Psi:growth}, it holds that $\norm{\Psi(\varphi_{0})}_{L^{1}} \leq C(1 + \norm{\varphi_{0}}_{L^{4}}^{4}) \leq C(1 + \norm{\varphi_{0}}_{H^{1}}^{4})$.  Thus $c_{0} := A \norm{\Psi(\varphi_{0})}_{L^{1}} + \frac{B}{2} \norm{\nabla \varphi_{0}}_{L^{2}}^{2}$ is bounded.  Integrating over $[0,s]$ for $s \in (0,T]$ yields
\begin{align*}
& \left ( A \norm{\Psi(\varphi_{n}(s))}_{L^{1}} + \frac{B}{2} \norm{\nabla \varphi_{n}(s)}_{L^{2}}^{2} \right ) + \frac{1}{2} \norm{\nabla \mu_{n}}_{L^{2}(0,s;L^{2})}^{2} \\
& \quad \leq  k_{0} C_{u} \int_{0}^{s} \left ( A \norm{\Psi(\varphi_{n})}_{L^{1}} + \frac{B}{2} \norm{\nabla \varphi_{n}}_{L^{2}}^{2} \right ) \dt + \left ( c_{0} +  d_{0} s \right ).
\end{align*}
Applying the Gronwall inequality \eqref{Gronwall} gives
\begin{equation}\label{apriori:GL:t}
\begin{aligned}
\left ( A \norm{\Psi_{n}(\varphi(s))}_{L^{1}} + \frac{B}{2} \norm{\nabla \varphi_{n}(s)}_{L^{2}}^{2} \right ) + \frac{1}{2} \norm{\nabla \mu_{n}}_{L^{2}(0,s;L^{2})}^{2} \leq (c_{0} + d_{0} T) e^{k_{0} C_{u} s}
\end{aligned}
\end{equation}
for all $s \in (0,T]$.  Taking supremum in $s$ leads to
\begin{align}\label{apriori:auxProb2:GL}
\norm{\Psi(\varphi_{n})}_{L^{\infty}(0,T;L^{1})} + \norm{\nabla \varphi_{n}}_{L^{\infty}(0,T;L^{2})}^{2} + \norm{\nabla \mu_{n}}_{L^{2}(0,T;L^{2})}^{2} \leq C,
\end{align}
where the constant $C$ depends only on $T$, $C_{u}$, $C_{p}$,  $k_{0}$, $k_{1}$, $A$, $B$, $\abs{\Omega}$, and $\norm{\varphi_{0}}_{H^{1}}$.  From \eqref{mean:mu} and \eqref{apriori:auxProb2:GL}, the mean of $\mu_{n}$ is bounded in $L^{\infty}(0,T)$, and the Poincar\'{e} inequality gives that $\mu_{n}$ is bounded in $L^{2}(0,T;L^{2})$.  Meanwhile, by \eqref{PsiLower} we see that
\begin{align}\label{varphiL1Psi}
\abs{\int_{\Omega} \varphi_{n} \dx} \leq \int_{\Omega} \abs{\varphi_{n}} \dx \leq \frac{1}{k_{2}} \norm{\Psi(\varphi_{n})}_{L^{1}} + \frac{k_{3}}{k_{2}} \abs{\Omega},
\end{align}
and thus by \eqref{apriori:auxProb2:GL}, the mean of $\varphi_{n}$ is bounded in $L^{\infty}(0,T)$, and by the Poincar\'{e} inequality we obtain that $\varphi_{n}$ is also bounded in $L^{\infty}(0,T;L^{2})$.  Thus, there exists a positive constant $C$, not depending on $\phi_{n}$ and $n$ such that
\begin{align*}
\norm{\varphi_{n}}_{L^{\infty}(0,T;H^{1})}^{2} + \norm{\mu_{n}}_{L^{2}(0,T;H^{1})}^{2} \leq C,
\end{align*}
and as a result, this guarantees that the Galerkin solutions $(\varphi_{n}, \mu_{n})$ can be extended to the interval $[0,T]$, and thus $t_{n} = T$ for each $n \in \N$.

\paragraph{Second estimate.}  From \eqref{Psi:growth} and the Sobolev embedding $H^{1} \subset L^{6}$, we have that
\begin{align*}
\norm{\Psi'(\varphi_{n})}_{L^{2}}^{2} \leq C(k_{4}) \left ( \abs{\Omega} + \norm{\varphi_{n}}_{L^{6}}^{6} \right ) \leq C\left ( 1 + \norm{\varphi_{n}}_{H^{1}}^{6} \right),
\end{align*}
where $C$ is a positive constant depending only on $k_{4}$ and $\Omega$.  Since $\norm{\Pi_{n}(\Psi'(\varphi_{n}))}_{L^{2}} \leq \norm{\Psi'(\varphi_{n})}_{L^{2}}$, applying elliptic regularity to \eqref{auxProb2:strong:mun} yields that $\varphi_{n}(t) \in H^{2}$ for a.e. $t \in (0,T)$ and satisfies
\begin{align*}
\norm{\varphi_{n}}_{L^{2}(0,T;H^{2})}^{2} \leq C \left ( \norm{\mu_{n}}_{L^{2}(0,T;L^{2})}^{2} + \norm{\varphi_{n}}_{L^{2}(0,T;L^{2})}^{2} + \norm{\Psi'(\varphi_{n})}_{L^{2}(0,T;L^{2})}^{2} \right ),
\end{align*}
with a positive constant $C$ depending only on $\Omega$, $A$ and $B$.  Then, by the Gagliardo--Nirenberg inequality \eqref{GagNirenIneq} with $d = 3$, $p = 10$, $j = 0$, $r = 2$, $m = 2$, $q = 6$ and $\alpha = \frac{1}{5}$, we have
\begin{align*}
& \norm{f}_{L^{10}(Q)} \leq C \norm{f}_{L^{2}(0,T;H^{2})}^{\frac{1}{5}} \norm{f}_{L^{\infty}(0,T;L^{6})}^{\frac{4}{5}} \\
& \quad \Rightarrow \varphi_{n} \in L^{2}(0,T;H^{2}) \cap L^{\infty}(0,T;H^{1}) \subset L^{10}(Q),
\end{align*} 
and with $d = 3$, $p = \frac{10}{3}$, $j = 0$, $r = 2$, $m = 1$, $q = 2$, and $\alpha = \frac{3}{5}$, we have
\begin{align*}
& \norm{f}_{L^{\frac{10}{3}}(Q)} \leq C \norm{f}_{L^{2}(0,T;H^{1})}^{\frac{3}{5}} \norm{f}_{L^{\infty}(0,T;L^{2})}^{\frac{2}{5}} \\
& \quad \Rightarrow \nabla \varphi \in L^{2}(0,T;H^{1}) \cap L^{\infty}(0,T;L^{2}) \subset L^{\frac{10}{3}}(Q).
\end{align*}
Then, by \eqref{Psi:growth} we have
\begin{align*}
& \norm{\nabla (\Psi'(\varphi_{n}))}_{L^{2}(0,T;L^{2})} = \left ( \int_{0}^{T} \int_{\Omega} \abs{\Psi''(\varphi_{n})}^{2} \abs{\nabla \varphi_{n}}^{2} \dx \dt \right)^{\frac{1}{2}} \\
& \quad \leq \norm{\Psi''(\varphi_{n})}_{L^{5}(Q)} \norm{\nabla \varphi_{n}}_{L^{\frac{10}{3}}(Q)} \leq C(k_{4}) \left ( 1+ \norm{\varphi_{n}}_{L^{10}(Q)}^{2} \right) \norm{\nabla \varphi_{n}}_{L^{\frac{10}{3}}(Q)},
\end{align*}
and so $\Psi'(\varphi_{n}) \in L^{2}(0,T;H^{1})$.  Application of elliptic regularity yields that $\varphi_{n}(t) \in H^{3}$ for a.e. $t \in (0,T)$ and
\begin{align*}
\norm{\varphi_{n}}_{L^{2}(0,T;H^{3})}^{2} \leq C \left ( \norm{\mu_{n}}_{L^{2}(0,T;H^{1})}^{2} + \norm{\varphi_{n}}_{L^{2}(0,T;H^{1})} + \norm{\Psi'(\varphi_{n})}_{L^{2}(0,T;H^{1})}^{2} \right ),
\end{align*}
for a positive constant $C$ not depending on $\phi_{n}$ and $n$.

\paragraph{Third estimate.}  Differentiating \eqref{auxProb2:strong:mun} in time, we obtain
\begin{align}\label{auxProb2:strong:pdtmun}
\pd_{t} \mu_{n} = A \Pi_{n}(\Psi''(\varphi_{n}) \pd_{t}\varphi_{n}) - B \Laplace \pd_{t}\varphi_{n}.
\end{align}
Multiplying \eqref{auxProb2:strong:pdtmun} with $\mu_{n}$ and \eqref{auxProb2:strong:varphin} with $B \pd_{t}\varphi_{n}$, and then integrating over $\Omega$, we obtain upon summing
\begin{equation}\label{reg:mu}
\begin{aligned}
\frac{1}{2} \frac{\dd}{\dt} \norm{\mu_{n}}_{L^{2}}^{2} & + B \norm{\pd_{t}\varphi_{n}}_{L^{2}}^{2} \\
& = \int_{\Omega} Bh(\varphi_{n})( \PP \mathcal{M}_{1}(\phi_{n}) - \AA - \alpha u_{n}) \pd_{t}\varphi_{n} - A \Psi''(\varphi_{n}) \pd_{t}\varphi_{n} \mu_{n} \dx.
\end{aligned}
\end{equation}
From \eqref{Psi:growth}, we find that
\begin{align*}
\norm{\Psi''(\varphi_{n})}_{L^{\infty}(0,T;L^{3})}^{3} \leq C(k_{4}) \left ( \abs{\Omega} + \norm{\varphi_{n}}_{L^{\infty}(0,T;L^{6})}^{6} \right ) \leq C \left (1 + \norm{\varphi_{n}}_{L^{\infty}(0,T;H^{1})}^{6} \right ),
\end{align*}
and so $\Psi''(\varphi_{n})$ is bounded in $L^{\infty}(0,T;L^{3})$.  Applying H\"{o}lder's inequality on the right-hand side of \eqref{reg:mu} yields
\begin{align*}
\frac{1}{2} \frac{\dd}{\dt} \norm{\mu_{n}}_{L^{2}}^{2} + B \norm{\pd_{t}\varphi_{n}}_{L^{2}}^{2} & \leq BC_{u} \norm{\pd_{t}\varphi_{n}}_{L^{1}} + A \norm{\Psi''(\varphi_{n})}_{L^{3}} \norm{\pd_{t}\varphi_{n}}_{L^{2}} \norm{\mu_{n}}_{L^{6}} \\
& \leq BC_{u}^{2} + \frac{2B}{4} \norm{\pd_{t} \varphi_{n}}_{L^{2}}^{2} + \frac{A^{2}C_{\mathrm{Sob}}}{B} \norm{\Psi''(\varphi_{n})}_{L^{\infty}(0,T;L^{3})}^{2} \norm{\mu_{n}}_{H^{1}}^{2},
\end{align*}
where we recall $C_{u} = \PP + \AA + \alpha$ and $C_{\mathrm{Sob}}$ is the positive constant from the Sobolev embedding $H^{1} \subset L^{6}$ depending only on $\Omega$.  Then, integrating in time and using that $\mu_{n}$ is bounded in $L^{2}(0,T;H^{1})$, and $\norm{\mu_{n}(0)}_{L^{2}}^{2} \leq C_{\mathrm{ini}}\norm{\varphi_{0}}_{H^{3}}^{2}$, we have
\begin{align*}
\norm{\mu_{n}}_{L^{\infty}(0,T;L^{2})}^{2} + \norm{\pd_{t}\varphi_{n}}_{L^{2}(0,T;L^{2})}^{2} \leq C,
\end{align*}
where the positive constant $C$ depends only on $\Omega$, $C_{u}$, $A$, $B$, $\norm{\varphi_{n}}_{L^{\infty}(0,T;H^{1})}$, $\norm{\mu_{n}}_{L^{2}(0,T;H^{1})}$, $k_{4}$, and $\norm{\varphi_{0}}_{H^{3}}$.  Furthermore, by \eqref{Psi:growth} we have that
\begin{align*}
\norm{\Psi'(\varphi_{n})}_{L^{\infty}(0,T;L^{2})}^{2} \leq C(k_{4}) \left ( \abs{\Omega} + \norm{\varphi_{n}}_{L^{\infty}(0,T;L^{6})}^{6} \right ).
\end{align*}
Together with the improved regularity $\mu_{n} \in L^{\infty}(0,T;L^{2})$, when we revisit the elliptic equation \eqref{auxProb2:strong:mun} we find that 
\begin{align}\label{varphinLinftyH2}
\norm{\varphi_{n}}_{L^{\infty}(0,T;H^{2})}^{2} \leq C \left ( \norm{\mu_{n}}_{L^{\infty}(0,T;L^{2})}^{2} + \norm{\varphi_{n}}_{L^{\infty}(0,T;L^{2})}^{2} + \norm{\Psi'(\varphi_{n})}_{L^{\infty}(0,T;H^{1})}^{2} \right )
\end{align}
with a positive constant $C$ not depending on $\phi_{n}$ and $n$.  Similarly, viewing \eqref{auxProb2:strong:varphin} as an elliptic problem for $\mu_{n}$, and as $\pd_{t}\varphi_{n} \in L^{2}(0,T;L^{2})$, we have by elliptic regularity
\begin{align*}
\norm{\mu_{n}}_{L^{2}(0,T;H^{2})}^{2} \leq C \left ( \norm{\mu_{n}}_{L^{2}(0,T;L^{2})}^{2} + \norm{\pd_{t}\varphi_{n}}_{L^{2}(0,T;L^{2})}^{2} \right ),
\end{align*}
where the positive constant $C$ does not depend on $\phi_{n}$ or $n$.

\paragraph{Compactness.}
From the above a priori estimates, we obtain for a relabelled subsequence,
\begin{alignat*}{4}
\varphi_{n} & \to \varphi && \text{ weakly* } && \text{ in } L^{2}(0,T;H^{3}) \cap L^{\infty}(0,T;H^{2}) \cap H^{1}(0,T;L^{2}) \\
\mu_{n} & \to \mu && \text{ weakly* } && \text{ in } L^{\infty}(0,T;L^{2}) \cap L^{2}(0,T;H^{2}),
\end{alignat*}
and thanks to the compact embedding \cite[Theorem 6.3 part III]{book:AdamsFournier}
\begin{align*}
W^{j+m,p}(\Omega) \subset \subset C^{j}(\overline{\Omega}) \text{ if } mp > d,
\end{align*}
where $d$ is the space dimension, we find that $H^{2}(\Omega)$ is compactly embedded into $C^{0}(\overline{\Omega})$.  Hence, by \cite[\S 8, Corollary 4]{article:Simon86} we have the following strong convergences
\begin{align*}
\varphi_{n} \to \varphi \text{ strongly in } L^{2}(0,T;W^{2,r}) \cap C^{0}([0,T];W^{1,r}) \cap C^{0}(\overline{Q}),  
\end{align*}
for any $1 \leq r < 6$.  The initial condition $\varphi_{0}$ is attained from that fact that $\varphi \in C^{0}([0,T];H^{1})$.  It follows from standard arguments that the pair $(\varphi, \mu)$ satisfies \eqref{auxProb2:weakform}, see for instance \cite{article:GarckeLamNeumann,article:GarckeLamDirichlet}.  Furthermore, by weak lower semicontinuity of the norms, we obtain \eqref{auxProb2:Est}.

\paragraph{Continuous dependence.}
Let $\{(\varphi_{i},\mu_{i})\}_{i=1,2}$ denote two solution pairs satisfying \eqref{auxProb2:weakform} with the same initial condition $\varphi_{0}$ and corresponding data $\{(\phi_{i}, u_{i})\}_{i=1,2}$, respectively.  Then, it holds that the difference $\varphi := \varphi_{1} - \varphi_{2}$ and $\mu := \mu_{1} - \mu_{2}$ satisfy
\begin{subequations}
\begin{align}
0 & =  \int_{\Omega} \pd_{t}\varphi \zeta + \nabla \mu \cdot \nabla \zeta - h(\varphi_{2}) (\PP \overline{\mathcal{M}_{1}} - \alpha \overline{u}) \zeta - \overline{h}(\PP \mathcal{M}_{1}(\phi_{1}) - \AA - \alpha u_{1}) \zeta \dx \label{ctsdep:varphi} \\
0 & = \int_{\Omega} \mu \zeta - A(\Psi'(\varphi_{1}) - \Psi'(\varphi_{2})) \zeta - B \nabla \varphi \cdot \nabla \zeta \dx, \label{ctsdep:mu} 
\end{align}
\end{subequations}
where
\begin{align*}
\overline{\mathcal{M}_{1}} := \mathcal{M}_{1}(\phi_{1}) - \mathcal{M}_{1}(\phi_{2}), \quad \overline{u} := u_{1} - u_{2}, \quad \overline{h} := h(\varphi_{1}) - h(\varphi_{2}).
\end{align*}
Substituting $\zeta = B \varphi$ in \eqref{ctsdep:varphi} and $\zeta = \mu$ in \eqref{ctsdep:mu}, integrating over $[0,s]$ for $s \in (0,T]$ and upon adding we obtain
\begin{equation}\label{auxProb2:ctsdep1}
\begin{aligned}
& \frac{B}{2} \norm{\varphi(s)}_{L^{2}}^{2}  + \norm{\mu}_{L^{2}(0,s;L^{2})}^{2} \\
& \quad = \int_{0}^{s} \int_{\Omega} A (\Psi'(\varphi_{1}) - \Psi'(\varphi_{2})) \mu + h(\varphi_{2}) (\PP \overline{\mathcal{M}_{1}} - \alpha \overline{u}) B \varphi  \dx \dt \\
& \quad + \int_{0}^{s} \int_{\Omega} \overline{h}(\PP \mathcal{M}_{1}(\phi_{1}) - \AA - \alpha u_{1}) B \varphi \dx \dt .
\end{aligned}
\end{equation}
By the boundedness of $\mathcal{M}_{1}(\phi_{1})$ and $u_{1}$, the Lipschitz continuity of $h$, we obtain
\begin{align*}
\abs{ \int_{0}^{s} \int_{\Omega} (h(\varphi_{2}) - h(\varphi_{1}))(\PP \mathcal{M}_{1}(\phi_{1}) - \AA - \alpha u_{1}) B \varphi \dx \dt} & \leq B L_{h} C_{u} \norm{\varphi}_{L^{2}(0,s;L^{2})}^{2},
\end{align*}
while by H\"{o}lder's inequality and Young's inequality, and the boundedness of $h$, we have
\begin{align*}
& \abs{\int_{0}^{s} \int_{\Omega} h(\varphi_{2}) (\PP \overline{\mathcal{M}_{1}} - \alpha \overline{u}) B \varphi \dx \dt} \\
& \quad \leq \frac{1}{2} \norm{\overline{u}}_{L^{2}(0,s;L^{2})}^{2} + \frac{1}{2} \norm{\overline{\mathcal{M}_{1}}}_{L^{2}(0,s;L^{2})}^{2} + \frac{(B \alpha)^{2} + (B \PP)^{2}}{2}  \norm{\varphi}_{L^{2}(0,s;L^{2})}^{2}.
\end{align*}
Using \eqref{Psi'diff} and the fact that $\varphi_{i} \in C^{0}(\overline{Q})$, we find that
\begin{equation}\label{ctsdep:Psi':difference}
\begin{aligned}
& \abs{\int_{0}^{s} \int_{\Omega} A (\Psi'(\varphi_{1}) - \Psi'(\varphi_{2})) \mu \dx \dt} \leq A k_{5} \int_{0}^{s} \int_{\Omega} (1 + \abs{\varphi_{1}}^{2} + \abs{\varphi_{2}}^{2}) \abs{\varphi} \abs{\mu} \dx \dt \\
& \quad \leq \frac{A^{2}k_{5}^{2}}{2} \left (1 + \norm{\varphi_{1}}_{L^{\infty}(Q)}^{2} + \norm{\varphi_{2}}_{L^{\infty}(Q)}^{2} \right )^{2}\norm{\varphi}_{L^{2}(0,s;L^{2})}^{2} + \frac{1}{2}\norm{\mu}_{L^{2}(0,s;L^{2})}^{2}.
\end{aligned}
\end{equation}
Then, substituting the above three estimates into \eqref{auxProb2:ctsdep1} we obtain for $s \in (0,T]$,
\begin{align}\label{auxProb2:ctsdep:est}
B \norm{\varphi(s)}_{L^{2}}^{2} + \norm{\mu}_{L^{2}(0,s;L^{2})}^{2} \leq \mathcal{Q} \norm{\varphi}_{L^{2}(0,s;L^{2})}^{2} + \norm{\overline{u}}_{L^{2}(0,s;L^{2})}^{2}  +  \norm{\overline{\mathcal{M}_{1}}}_{L^{2}(0,s;L^{2})}^{2},
\end{align}
where
\begin{align*}
\mathcal{Q} := (B\alpha)^{2} + (B \PP)^{2} + A^{2} k_{5}^{2} \left (1 + \norm{\varphi_{1}}_{L^{\infty}(Q)}^{2} + \norm{\varphi_{2}}_{L^{\infty}(Q)}^{2} \right )^{2}
\end{align*}
is a positive constant.  Applying \eqref{Gronwall} yields for any $s \in (0,T]$,
\begin{align*}
& B \norm{\varphi_{1}(s) - \varphi_{2}(s)}_{L^{2}}^{2} + \norm{\mu_{1} - \mu_{2}}_{L^{2}(0,s;L^{2})}^{2} \\
& \quad \leq \left ( \norm{u_{1} - u_{2}}_{L^{2}(0,s;L^{2})}^{2}  +  \norm{\mathcal{M}_{1}(\phi_{1}) - \mathcal{M}_{1}(\phi_{2})}_{L^{2}(0,s;L^{2})}^{2} \right ) e^{\frac{\mathcal{Q}}{B}s} 
\end{align*}
where we used that $W(t) := \norm{\overline{u}}_{L^{2}(0,t;L^{2})}^{2} + \norm{\overline{\mathcal{M}_{1}}}_{L^{2}(0,t;L^{2})}^{2}$ is a nondecreasing function of $t$, and thus 
\begin{align*}
W(s) + \int_{0}^{s} W(t) \frac{\mathcal{Q}}{B} e^{\frac{\mathcal{Q}}{B}t} \dt \leq W(s) \left ( 1 + \int_{0}^{s} \frac{\mathcal{Q}}{B} e^{\frac{\mathcal{Q}}{B}t} \dt \right ) = W(s) e^{\frac{\mathcal{Q}}{B}s}.
\end{align*}
\end{proof}

We point out that although the source term in \eqref{state:varphi} closely resembles that of \cite{article:GarckeLamNeumann}, we obtain a priori estimates for potentials $\Psi$ with quartic growth (see \eqref{Psi:growth}), which is in contrast to the quadratic potentials considered in \cite{article:GarckeLamNeumann}.  The main difference is that here we have the boundedness of the nutrient, and thus we only require a bound on the mean of $\mu$ in \eqref{sourceterm:est}.  But in \cite{article:GarckeLamNeumann}, the presence of the active transport mechanism (modeled by the term $\div (n(\varphi)\chi \nabla \varphi)$ in the nutrient equation) prevents us from applying a weak comparison principle to deduce the boundedness of the nutrient.  Without the boundedness of the nutrient, we have to control the square of the mean of $\mu$ in order to estimate the source term in \eqref{sourceterm:est}. 

\subsection{Existence by Schauder's fixed point theorem}

Note that if $\{\phi_{n}\}_{n \in \N}$ is a bounded sequence in $L^{2}(Q)$, by Lemma \ref{lem:auxProb1:nutrient} the corresponding sequence $\{\sigma_{n} := \mathcal{M}_{1}(\phi_{n})\}_{n \in \N}$ satisfies $0 \leq \sigma_{n} \leq 1$ a.e. in $Q$, and by Lemma \ref{lem:auxCahnHilliard} we have that the corresponding solution pair $\{\varphi_{n},\mu_{n}\}_{n \in \N}$ is bounded uniformly in
\begin{align*}
\left (L^{\infty}(0,T;H^{2}) \cap L^{2}(0,T;H^{3}) \cap H^{1}(0,T;L^{2}) \right ) \times \left ( L^{2}(0,T;H^{2}) \cap L^{\infty}(0,T;L^{2}) \right )
\end{align*}
which yields a strongly convergent (relabelled) subsequence $\{\varphi_{n}\}_{n \in N}$ in $L^{2}(Q)$, due to the compact embedding
\begin{align*}
L^{2}(0,T;H^{1}) \cap H^{1}(0,T;L^{2}) \subset \subset L^{2}(Q).
\end{align*}
Thus, the mapping
\begin{alignat*}{4}
\mathcal{M} & :L^{2}(Q) && \to && L^{2}(Q) \\
 & \quad \quad \phi && \mapsto &&  \quad \varphi \text{ satisfying } \eqref{auxProb2:weakform} 
\end{alignat*}
is compact.  To apply Schauder's fixed point theorem and deduce the existence of a fixed point of the mapping $\mathcal{M}$, we need to check that if there exists a constant $M$ such that
\begin{align*}
\norm{\phi}_{L^{2}(Q)} \leq M \text{ for all } \phi \in L^{2}(Q) \text{ and for all } \lambda \in [0,1] \text{ satisfying } \phi = \lambda \mathcal{M}(\phi).
\end{align*}
The problem $\varphi = \lambda \mathcal{M}(\varphi)$ translates to
\begin{align*}
\pd_{t}\varphi & = \Laplace \mu + (\PP \sigma - \AA - \alpha u) h(\varphi), \\
\mu & = A \Psi'(\varphi) - B \Laplace \varphi, \\
\pd_{t} \sigma & = \Laplace \sigma - \CC h(\lambda \varphi) \sigma + \BB (\sigma_{S} - \sigma).
\end{align*}
By Lemma \ref{lem:auxProb1:nutrient} we have that $0 \leq \sigma \leq 1$ a.e. in $Q$ for all $\lambda \in [0,1]$, and thus we can choose $M$ to be the constant $C_{\mathrm{AP2}}$ in \eqref{auxProb2:Est} which does not depend on $\varphi$ and $\lambda \in [0,1]$.  Thus Schauder's fixed point theorem yields the existence of a weak solution $(\varphi, \mu, \sigma)$ to the state equations \eqref{eq:state} with $0 \leq \sigma \leq 1$ a.e. in $Q$ and
\begin{equation}\label{solution:estimate}
\begin{aligned}
& \norm{\varphi}_{L^{\infty}(0,T;H^{2}) \cap L^{2}(0,T;H^{3}) \cap H^{1}(0,T;L^{2})} \\
& \quad + \norm{\mu}_{L^{2}(0,T;H^{2}) \cap L^{\infty}(0,T;L^{2})} + \norm{\sigma}_{L^{2}(0,T;H^{2}) \cap L^{\infty}(0,T;H^{1}) \cap H^{1}(0,T;L^{2})} \leq \overline{C},
\end{aligned}
\end{equation}  
for some positive constant $\overline{C}$ not depending on $(\varphi, \mu, \sigma, u)$.

\subsection{Continuous dependence}
We now establish continuous dependence on the control $u$.  For this purpose, let $u_{1}, u_{2}  \in \mathcal{U}_{\mathrm{ad}}$ be given, along with the corresponding solution triplet $(\varphi_{1}, \mu_{1}, \sigma_{1})$ and $(\varphi_{2}, \mu_{2}, \sigma_{2})$ satisfying the same initial data $\varphi_{0}$ and $\sigma_{0}$.  Let $\varphi = \varphi_{1} - \varphi_{2}$, $\mu = \mu_{1} - \mu_{2}$ and $\sigma = \sigma_{1} - \sigma_{2}$, then from
\eqref{auxProb1:ctsdep:sigmaL2L2} we obtain
\begin{align*}
\norm{\sigma}_{L^{2}(0,s;L^{2})}^{2} = \int_{0}^{s} \norm{\sigma(t)}_{L^{2}}^{2} \dt \leq \CC L_{h} \int_{0}^{s} \norm{\varphi}_{L^{2}(0,t;L^{2})}^{2} e^{t} \dt \leq \CC L_{h} (e^{s} - 1) \norm{\varphi}_{L^{2}(0,s;L^{2})}^{2}.
\end{align*}
Substituting this into \eqref{auxProb2:ctsdep:est} leads to
\begin{align*}
B\norm{\varphi(s)}_{L^{2}}^{2} + \norm{\mu}_{L^{2}(0,s;L^{2})}^{2} & \leq \mathcal{Q} \norm{\varphi}_{L^{2}(0,s;L^{2})}^{2} + \norm{u_{1} - u_{2}}_{L^{2}(0,s;L^{2})}^{2} + \norm{\sigma}_{L^{2}(0,s;L^{2})}^{2}  \\
& \leq \left ( \mathcal{Q} + \CC L_{h} (e^{s} - 1) \right ) \norm{\varphi}_{L^{2}(0,s;L^{2})}^{2} + \norm{u_{1} - u_{2}}_{L^{2}(0,s;L^{2})}^{2}.
\end{align*}
Setting
\begin{align*}
W(s) = \norm{u_{1} - u_{2}}_{L^{2}(0,s;L^{2})}^{2}, \; X(t) = \frac{\mathcal{Q} + \CC L_{h} (e^{s} - 1)}{B}, \; Y(s) = B \norm{\varphi(s)}_{L^{2}}^{2}, \; Z(t) = \norm{\mu}_{L^{2}}^{2}, 
\end{align*}
we obtain from \eqref{Gronwall} that
\begin{align*}
B \norm{\varphi(s)}_{L^{2}}^{2} + \norm{\mu}_{L^{2}(0,s;L^{2})}^{2} \leq \norm{u_{1} - u_{2}}_{L^{2}(0,s;L^{2})}^{2} \exp \left (s \left ( \frac{\mathcal{Q} + \CC L_{h} (e^{s} - 1)}{B} \right ) \right ) \text{ for } s \in (0,T].
\end{align*}
Combining with \eqref{auxProb1:ctsdep:est1} and \eqref{auxProb1:ctsdep:est2}, we find that there exists a positive constant $C_{1}$, depending only on $B$, $\mathcal{Q}$, $\CC$, $L_{h}$, $T$ such that
\begin{align}\label{ctsdep:main:est1}
\norm{\varphi(s)}_{L^{2}}^{2} + \norm{\sigma(s)}_{H^{1}}^{2}  + \norm{\mu}_{L^{2}(0,s;L^{2})}^{2} + \norm{\pd_{t}\sigma}_{L^{2}(0,s;L^{2})}^{2} \leq C_{1} \norm{u_{1} - u_{2}}_{L^{2}(0,s;L^{2})}^{2}
\end{align}
for $s \in (0,T]$.  Next, we find using \eqref{Psi'diff} and the fact that $\varphi_{i} \in C^{0}(\overline{Q})$ for $i = 1,2$,
\begin{align*}
\norm{\Psi'(\varphi_{1}) - \Psi'(\varphi_{2})}_{L^{2}(0,s;L^{2})}^{2} \leq k_{5}^{2} \left ( 1 + \norm{\varphi_{1}}_{L^{\infty}(Q)} + \norm{\varphi_{2}}_{L^{\infty}(Q)} \right )^{4} \norm{\varphi}_{L^{2}(0,s;L^{2})}^{2},
\end{align*}
and so viewing \eqref{ctsdep:mu} as an elliptic problem for $\varphi$, we obtain by elliptic regularity
\begin{align*}
\norm{\varphi}_{L^{2}(0,s;H^{2})}^{2} & \leq C \left ( \norm{\varphi}_{L^{2}(0,s;L^{2})}^{2} + \norm{\Psi'(\varphi_{1}) - \Psi'(\varphi_{2})}_{L^{2}(0,s;L^{2})}^{2} + \norm{\mu}_{L^{2}(0,s;L^{2})}^{2} \right ) \\
& \leq C_{2} \norm{u_{1} - u_{2}}_{L^{2}(0,s;L^{2})}^{2},
\end{align*}
where $C_{2}$ is a positive constant depending only on $\Omega$, $A$, $k_{5}$, $\norm{\varphi_{i}}_{L^{\infty}(Q)}$, $T$ and $C_{1}$.  


\section{Existence of a minimizer}\label{sec:minimizer}
From \eqref{solution:estimate} it holds that
\begin{align*}
\frac{1}{r} \int_{\tau-r}^{\tau} \int_{\Omega} \varphi \dx \dt \geq - \frac{1}{r} \norm{\varphi}_{L^{1}(0,T;L^{1})} \geq - \overline{C},
\end{align*}
where $\overline{C}$ is a positive constant independent of $(\varphi, \mu, \sigma, u)$.  Hence, we obtain that
\begin{align*}
J_{r}(\varphi, u, \tau) \geq \frac{\beta_{S}}{2} \frac{1}{r} \int_{\tau-r}^{\tau} \int_{\Omega} \varphi \dx \dt \geq - \frac{\beta_{S}}{2} \overline{C} > - \infty.
\end{align*}
As $J_{r}$ is bounded from below, we can consider a minimising sequence $(u_{n}, \tau_{n})_{n \in \N}$ with $u_{n} \in \mathcal{U}_{\mathrm{ad}}$, $\tau_{n} \in (0,T)$  and corresponding weak solutions $(\varphi_{n}, \mu_{n}, \sigma_{n})_{n \in \N}$ on the interval $[0,T]$ with $\varphi_{n}(0) = \varphi_{0}$ and $\sigma_{n}(0) = \sigma_{0}$ for all $n \in \N$, such that
\begin{align*}
\lim_{n \to \infty} J_{r}(\varphi_{n}, u_{n}, \tau_{n}) = \inf_{(\phi, w, s)} J_{r}(\phi, w, s).
\end{align*}
In particular, $u_{n} \in \mathcal{U}_{\mathrm{ad}}$ implies that $0 \leq u_{n} \leq 1$ a.e. in $Q$ for all $n \in \N$.  As $\{\tau_{n}\}_{n \in \N}$ is a bounded sequence, there exists a relabelled subsequence such that
\begin{align*}
\tau_{n} \to \Optime \in [0,T] \text{ as } n \to \infty,
\end{align*}
and
\begin{alignat*}{3}
u_{n} & \to u_{*} && \text{ weakly* } && \text{ in }  L^{\infty}(Q), \\
\varphi_{n} & \to \varphi_{*} && \text{ weakly* } && \text{ in }  L^{\infty}(0,T;H^{2}) \cap L^{2}(0,T;H^{3}) \cap H^{1}(0,T;L^{2}), \\
\varphi_{n} & \to \varphi_{*} && \text{ strongly } && \text{ in } C^{0}([0,T];L^{2}) \cap L^{2}(0,T;L^{2}), \\
\mu_{n} & \to \mu_{*} && \text{ weakly* } && \text{ in }  L^{2}(0,T;H^{2}) \cap L^{\infty}(0,T;L^{2}), \\
\sigma_{n} & \to \sigma_{*} && \text{ weakly* } && \text{ in }  L^{\infty}(0,T;H^{1}) \cap L^{2}(0,T;H^{2}) \cap H^{1}(0,T;L^{2}) \cap L^{\infty}(Q),
\end{alignat*}
where $(\varphi_{*}, \mu_{*}, \sigma_{*}, u_{*})$ satisfy \eqref{weak:state} with $0 \leq u_{*}, \sigma_{*} \leq 1$ a.e. in $Q$.  Note that by the dominated convergence theorem, for all $p \in [1,\infty)$,
\begin{align*}
\chi_{[0,\tau_{n}]}(t) \to \chi_{[0,\Optime]}(t), \quad  \chi_{[\tau_{n} - r,\tau_{n}]}(t) \to \chi_{[\Optime - r,\Optime]}(t) \quad \text{ strongly in } L^{p}(0,T).
\end{align*}
Then, by the strong convergence of $\varphi_{n} - \varphi_{Q}$ to $\varphi_{*} - \varphi_{Q}$ in $L^{2}(Q)$ and the strong convergence $\chi_{[0,\tau_{n}]}(t)$ to $\chi_{[0,\Optime]}(t)$ also in $L^{2}(Q)$, we have
\begin{equation}\label{minimizer:1}
\begin{aligned}
& \int_{0}^{\tau_{n}} \int_{\Omega} \abs{\varphi_{n} - \varphi_{Q}}^{2} \dx \dt  = \int_{0}^{T} \norm{\varphi_{n} - \varphi_{Q}}_{L^{2}}^{2} \chi_{[0,\tau_{n}]}(t) \dt \\
& \quad \longrightarrow \int_{0}^{T} \norm{\varphi_{*} - \varphi_{Q}}_{L^{2}}^{2} \chi_{[0,\Optime]}(t) \dt = \int_{0}^{\Optime} \int_{\Omega} \abs{\varphi_{*} - \varphi_{Q}}^{2} \dx \dt \text{ as } n \to \infty.
\end{aligned}
\end{equation}
A similar argument yields
\begin{equation}\label{minimizer:2}
\begin{aligned}
& \frac{1}{r} \int_{\tau_{n} - r}^{\tau_{n}} \left ( \frac{\beta_{\Omega}}{2} \norm{\varphi_{n} - \varphi_{\Omega}}^{2} + \frac{\beta_{S}}{2} \int_{\Omega} 1 + \varphi_{n} \dx \right ) \dt \\
& \quad \longrightarrow \frac{1}{r} \int_{\Optime - r}^{\Optime} \left ( \frac{\beta_{\Omega}}{2} \norm{\varphi_{*} - \varphi_{\Omega}}^{2} + \frac{\beta_{S}}{2} \int_{\Omega} 1 + \varphi_{*} \dx \right ) \dt \text{ as } n \to \infty.
\end{aligned}
\end{equation}
Next, using the weak lower semicontinuity of the $L^{2}(Q)$-norm, we have 
\begin{align*}
\norm{u_{*}}_{L^{2}(Q)} \leq \liminf_{n \to \infty} \norm{u_{n}}_{L^{2}(Q)},
\end{align*}
and so, passing to the limit $n \to \infty$ in the above inequality and using the convergence $\tau_{n} \to \Optime$, we see that
\begin{align}\label{minimizer:3}
\liminf_{n \to \infty} \int_{0}^{\tau_{n}} \norm{u_{n}}_{L^{2}}^{2} \dt - \int_{0}^{\Optime} \norm{u_{*}}_{L^{2}}^{2} \dt \geq 0.
\end{align}
Then, by passing to the limit $n \to \infty$ in $J_{r}(\varphi_{n}, u_{n}, \tau_{n})$ and using \eqref{minimizer:1}, \eqref{minimizer:2}, and \eqref{minimizer:3}, we have
\begin{align*}
\inf_{(\phi, w, s)} J_{r}(\phi, w, s) = \lim_{n \to \infty} J_{r}(\varphi_{n}, u_{n}, \tau_{n}) \geq J_{r}(\varphi_{*}, u_{*}, \Optime),
\end{align*}
which implies that $(u_{*},\Optime)$ is a minimizer of \eqref{OCProblem}.

\section{Fr\'{e}chet differentibility of the solution operator} \label{sec:Frechetdiff}
\subsection{Unique solavability of the linearized state equations}\label{sec:LinearizedState}
Recalling the set $\{w_{i}\}_{i \in \N}$ of eigenfunctions of the Neumann-Laplacian from the proof of Lemma \ref{lem:auxCahnHilliard}, we look for functions of the form
\begin{align*}
\Phi_{n}(x,t) := \sum_{i=1}^{n} \gamma_{n,i}(t) w_{i}(x), \quad \Xi_{n}(x,t) := \sum_{i=1}^{n} \delta_{n,i}(t)w_{i}(x), \quad \Sigma_{n}(x,t) := \sum_{i=1}^{n} \eta_{n,i}(t)w_{i}(x)
\end{align*}
satisfying
\begin{subequations}\label{Galerkin:Linearized}
\begin{align}
0 & = \int_{\Omega} \pd_{t} \Phi_{n} v + \nabla \Xi_{n} \cdot \nabla v - h(\overline{\varphi})(\PP \Sigma_{n} - \alpha w) v - h'(\overline{\varphi}) (\PP \overline{\sigma} - \AA - \alpha \overline{u}) \Phi_{n} v \dx, \label{Linearized:varphi} \\
0 & = \int_{\Omega} \Xi_{n} v - A \Psi''(\overline{\varphi}) \Phi_{n} v - B \nabla \Phi_{n} \cdot \nabla v \dx, \label{Linearized:mu} \\
0 & = \int_{\Omega} \pd_{t} \Sigma_{n} v + \nabla \Sigma_{n} \cdot \nabla v  + \BB \Sigma_{n} v +  \CC (h(\overline{\varphi}) \Sigma_{n} + h'(\overline{\varphi}) \Phi_{n} \overline{\sigma}) v \dx, \label{Linearized:sigma}
\end{align}
\end{subequations}
for all $v \in W_{n}$.  Substituting $v = w_{j}$ leads to 
\begin{subequations}\label{Linearized:ODE}
\begin{align}
\bm{\gamma}_{n}' & = - \bm{S} \bm{\delta}_{n} - \bm{M}_{n}^{h} \PP \bm{\eta}_{n} + \bm{J}_{n} - \bm{K}_{n} \bm{\gamma}_{n}, \\
\bm{\delta}_{n} & = A \bm{\phi}_{n} + B \bm{S} \bm{\gamma}_{n}, \\
\bm{\eta}_{n}' & = - \bm{S} \bm{\eta}_{n} - \BB \bm{\eta}_{n} - \CC \bm{M}_{n}^{h} \bm{\eta}_{n} - \CC \bm{L}_{n} \bm{\gamma}_{n},
\end{align}
\end{subequations}
where the matrix $\bm{S}$ has been defined in \eqref{ODE:vectormatrix}, and for $1 \leq i,j \leq n$,
\begin{alignat*}{4}
(\bm{M}_{n}^{h})_{ij} & := \int_{\Omega} h(\overline{\varphi}) w_{i} w_{j} dx, \quad && (\bm{J}_{n})_{j} && := \int_{\Omega} h(\overline{\varphi}) \alpha w w_{j} \dx, \\
(\bm{K}_{n})_{ij} & := \int_{\Omega} h'(\overline{\varphi}) (\PP \overline{\sigma} - \AA - \alpha \overline{u}) w_{i} w_{j} \dx, \quad && (\bm{\phi}_{n})_{j} && := \int_{\Omega} \Psi''(\overline{\varphi}) \Phi_{n} w_{j} \dx, \\
(\bm{L}_{n})_{ij} & := \int_{\Omega} h'(\overline{\varphi}) \overline{\sigma} w_{i} w_{j} \dx. \quad && &&
\end{alignat*}
Taking an approximating sequence in $C^{0}([0,T];L^{2})$ for $\overline{u}$, which we will abuse notation and reuse the variable $\overline{u}$, and then supplementing \eqref{Linearized:ODE} with the initial conditions $\bm{\gamma}_{n}(0) = \bm{0}$ and $\bm{\eta}_{n}(0) = \bm{0}$ leads to a system of ODEs with right-hand sides depending continuously on $(t, \bm{\gamma}_{n}, \bm{\eta}_{n})$.  Thus, by the Cauchy--Peano theorem, there exists $t_{n} \in (0,T]$ such that \eqref{Linearized:ODE} has a local solution $(\bm{\gamma}_{n}, \bm{\delta}_{n}, \bm{\eta}_{n})$ on $[0,t_{n}]$ with $\bm{\gamma}_{n}, \bm{\delta}_{n}, \bm{\eta}_{n} \in C^{1}([0,t_{n});\R^{n})$.  Then,  we obtain functions $\Phi_{n}, \Xi_{n}, \Sigma_{n} \in C^{1}([0,t_{n});W_{n})$ satisfying \eqref{Galerkin:Linearized}.

\paragraph{First estimate.}
Substituting $v = \Phi_{n}$ in \eqref{Linearized:varphi}, $v = \Laplace \Phi_{n}$ in \eqref{Linearized:mu} and $v = \Sigma_{n}$ in \eqref{Linearized:sigma}, integrating over $[0,t]$ for $t \in (0,T]$, and integrating by parts, we obtain after summation
\begin{align*}
\frac{1}{2} & \left ( \norm{\Phi_{n}(t)}_{L^{2}}^{2} + \norm{\Sigma_{n}(t)}_{L^{2}}^{2} \right ) + B \norm{\Laplace \Phi_{n}}_{L^{2}(0,t;L^{2})}^{2} + \norm{\nabla \Sigma_{n}}_{L^{2}(0,t;L^{2})}^{2}  \\
& \leq \int_{0}^{t} \int_{\Omega} h(\overline{\varphi}) (\PP \Sigma_{n} - \alpha w) \Phi_{n} + h'(\overline{\varphi}) (\PP \overline{\sigma} - \AA - \alpha \overline{u}) \abs{\Phi_{n}}^{2} + A \Psi''(\overline{\varphi}) \Phi_{n} \Laplace \Phi_{n} \dx \dt \\
& - \int_{0}^{t} \int_{\Omega} \CC h'(\overline{\varphi}) \overline{\sigma} \Phi_{n} \Sigma_{n} \dx \dt =: I_{1} + I_{2} + I_{3} + I_{4},
\end{align*}
where we used that $\Sigma_{n}(0) = \Phi_{n}(0) = 0$ and have neglected the nonnegative term $(\BB + \CC h(\overline{\varphi})) \abs{\Sigma_{n}}^{2}$.  From Theorem \ref{thm:state}, we have $\overline{\varphi} \in C^{0}(\overline{Q})$, and as $\Psi''$, $\Psi'''$, $h'$ and $h''$ are continuous with respect to their arguments, it holds that there exists a constant $C_{*} > 0$ such that
\begin{align}\label{BoundingConst:hPsi}
\sup_{(x,t) \in \overline{Q} } \left ( \abs{h'(\overline{\varphi}(x,t))} + \abs{h''(\overline{\varphi}(x,t))} + \abs{\Psi''(\overline{\varphi}(x,t))} + \abs{\Psi'''(\overline{\varphi}(x,t))} \right ) \leq C_{*}.
\end{align}
Then, applying H\"{o}lder's inequality and Young's inequality we obtain
\begin{align*}
\abs{I_{4}} & \leq \CC^{2} C_{*}^{2} \norm{\Phi_{n}}_{L^{2}(0,t;L^{2})}^{2} + \frac{1}{4} \norm{\Sigma_{n}}_{L^{2}(0,t;L^{2})}^{2}, \\
\abs{I_{3}} & \leq \frac{B}{2} \norm{\Laplace \Phi_{n}}_{L^{2}(0,t;L^{2})}^{2} + \frac{(AC_{*})^{2}}{2B} \norm{\Phi_{n}}_{L^{2}(0,t;L^{2})}^{2}, \\
\abs{I_{2}} & \leq C_{*}  \left (\PP + \AA + \alpha \right ) \norm{\Phi_{n}}_{L^{2}(0,t;L^{2})}^{2}, \\
\abs{I_{1}} & \leq \left ( \PP^{2} + 1 \right )\norm{\Phi_{n}}_{L^{2}(0,t;L^{2})}^{2} + \frac{1}{4} \norm{\Sigma_{n}}_{L^{2}(0,t;L^{2})}^{2} + \frac{\alpha^{2}}{4} \norm{w}_{L^{2}(0,t;L^{2})}^{2}.
\end{align*}
Using the estimates for $I_{1}$, $I_{2}$, $I_{3}$ and $I_{4}$, we obtain
\begin{align*}
 \norm{\Phi_{n}(t)}_{L^{2}}^{2} &  - C_{5} \norm{\Phi_{n}}_{L^{2}(0,t;L^{2})}^{2}  + \norm{\Sigma_{n}(t)}_{L^{2}}^{2} - C_{5} \norm{\Sigma_{n}}_{L^{2}(0,t;L^{2})}^{2} \\
& + \norm{\Laplace \Phi_{n}}_{L^{2}(0,t;L^{2})}^{2} + \norm{\nabla \Sigma_{n}}_{L^{2}(0,t;L^{2})}^{2}   \leq C_{6} \norm{w}_{L^{2}(0,t;L^{2})}^{2},
\end{align*}
where $C_{5}, C_{6} > 0$ are positive constants depending only on $C_{*}$, $A$, $B$, $\PP$, $\AA$, $\CC$, and $\alpha$.  Applying the integral form of Gronwall's inequality we obtain that
\begin{equation}\label{Linearized:est1}
\begin{aligned}
\norm{\Phi_{n}}_{L^{\infty}(0,T;L^{2})}^{2} + \norm{\Sigma_{n}}_{L^{\infty}(0,T;L^{2})}^{2} + \norm{\Sigma_{n}}_{L^{2}(0,T;H^{1})}^{2} + \norm{\Laplace \Phi_{n}}_{L^{2}(Q)}^{2} \leq D_{1} \norm{w}_{L^{2}(Q)}^{2},
\end{aligned}
\end{equation}
for some constant $D_{1}$ not depending on $n$, which in turn implies that
\begin{align*}
\{\Phi_{n}\}_{n \in \N} & \text{ is bounded uniformly in } L^{\infty}(0,T;L^{2}), \\
\{\Laplace \Phi_{n}\}_{n \in \N} & \text{ is bounded uniformly in } L^{2}(0,T;L^{2}), \\
\{\Sigma_{n}\}_{n \in \N} & \text{ is bounded uniformly in } L^{\infty}(0,T;L^{2}) \cap L^{2}(0,T;H^{1}).
\end{align*}

\paragraph{Second estimate.}
Substituting $v = \pd_{t} \Sigma_{n}$ in \eqref{Linearized:sigma}, we obtain
\begin{align*}
\frac{1}{2} \frac{\dd}{\dt} \norm{\nabla \Sigma_{n}}_{L^{2}}^{2} + \norm{\pd_{t} \Sigma_{n}}_{L^{2}}^{2} & = - \int_{\Omega} \BB \Sigma_{n} \pd_{t} \Sigma_{n} + \CC (h(\overline{\varphi}) \Sigma_{n} \pd_{t} \Sigma_{n} + h'(\overline{\varphi}) \Phi_{n} \pd_{t} \Sigma_{n}) \dx \\
& \leq \frac{3}{4} \norm{\pd_{t} \Sigma_{n}}_{L^{2}}^{2} + (\BB^{2} + \CC^{2})\norm{\Sigma_{n}}_{L^{2}}^{2} + \CC^{2} C_{*}^{2} \norm{\Phi_{n}}_{L^{2}}^{2}.
\end{align*}
Applying Gronwall's inequality yields that
\begin{equation}\label{Linearized:est2}
\begin{aligned}
\norm{\Sigma_{n}}_{L^{\infty}(0,T;H^{1})}^{2} + \norm{\pd_{t}\Sigma_{n}}_{L^{2}(Q)}^{2} \leq D_{2} \norm{w}_{L^{2}(Q)}^{2},
\end{aligned}
\end{equation}
where $D_{2}$ is a positive constant not depending on $n$.  Hence,
\begin{align*}
\{\Sigma_{n}\}_{n \in \N} \text{ is bounded uniformly in } L^{\infty}(0,T;H^{1}) \cap H^{1}(0,T;L^{2}).
\end{align*}
Furthermore, since $\CC(h(\overline{\varphi}) \Sigma_{n} + h'(\overline{\varphi}) \Phi_{n} \overline{\sigma}) - \pd_{t} \Sigma_{n} \in L^{2}$ for a.e. $t \in (0,T)$, we obtain from elliptic regularity theory that
\begin{equation}\label{Linearized:est3}
\begin{aligned}
\norm{\Sigma_{n}}_{L^{2}(0,T;H^{2})}^{2} & \leq  C \left ( \norm{\Sigma_{n}}_{L^{2}(Q)}^{2} + \norm{\pd_{t} \Sigma_{n}}_{L^{2}(Q)}^{2} + \norm{\Phi_{n}}_{L^{2}(Q)}^{2} \right ) \leq D_{3} \norm{w}_{L^{2}(Q)}^{2},
\end{aligned}
\end{equation}
where $C$ and $D_{3}$ are positive constants not depending on $n$.  Thus,
\begin{align*}
\{\Sigma_{n}\}_{n \in N} \text{ is bounded uniformly in } L^{2}(0,T;H^{2}) .
\end{align*}

\paragraph{Third estimate.}
Substituting $v = 1$ in \eqref{Linearized:mu} yields
\begin{align*}
\abs{\int_{\Omega} \Xi_{n} \dx} = \abs{\int_{\Omega} A \Psi''(\overline{\varphi}) \Phi_{n} \dx} \leq A C_{*} \norm{\Phi_{n}}_{L^{1}} \leq AC_{*} \abs{\Omega}^{\frac{1}{2}} \norm{\Phi_{n}}_{L^{2}}.
\end{align*}
Then, by the Poincar\'{e} inequality we find that
\begin{align*}
\norm{\Xi_{n}}_{L^{2}} \leq C_{p}\norm{\nabla \Xi_{n}}_{L^{2}} + \frac{1}{\abs{\Omega}^{\frac{1}{2}}} \abs{\int_{\Omega} \Xi_{n}\dx} \leq C_{p} \norm{\nabla \Xi_{n}}_{L^{2}} + AC_{*} \norm{\Phi_{n}}_{L^{2}},
\end{align*}
and thus
\begin{align}\label{meanvalue:XiL2L2}
\norm{\Xi_{n}}_{L^{2}(0,t;L^{2})}^{2} \leq 2C_{p}^{2} \norm{\nabla \Xi_{n}}_{L^{2}(0,t;L^{2})}^{2} + 2 A^{2}C_{*}^{2} \norm{\Phi_{n}}_{L^{2}(0,t;L^{2})}^{2}.
\end{align}
Substituting $v = \Xi_{n}$ in \eqref{Linearized:varphi} and $v = - \pd_{t} \Phi_{n}$ in \eqref{Linearized:mu}, and upon summing and integrating over $[0,t]$ for $t \in (0,T]$, we obtain
\begin{equation}\label{Linearized:apriori}
\begin{aligned}
& \frac{B}{2}  \norm{\nabla \Phi_{n}(t)}_{L^{2}}^{2} + \norm{\nabla \Xi_{n}}_{L^{2}(0,t;L^{2})}^{2} \\
& \quad = \int_{0}^{t} \int_{\Omega} - A \Psi''(\overline{\varphi}) \Phi_{n} \pd_{t} \Phi_{n} + h(\overline{\varphi}) (\PP \Sigma_{n} - \alpha w) \Xi_{n} \dx \dt \\
& \quad  + \int_{0}^{t} \int_{\Omega}  h'(\overline{\varphi}) (\PP \overline{\sigma} - \AA - \alpha \overline{u}) \Phi_{n} \Xi_{n}  \dx \dt  =: J_{1} + J_{2} + J_{3}.
\end{aligned}
\end{equation}
Applying H\"{o}lder's inequality and Young's inequality and \eqref{meanvalue:XiL2L2}, we observe that
\begin{align*}
\abs{J_{3}} & \leq C_{*} \left  (\PP + \AA + \alpha  \right ) \norm{\Phi_{n}}_{L^{2}(0,t;L^{2})} \norm{\Xi_{n}}_{L^{2}(0,t;L^{2})} \\
& \leq \frac{1}{4} \norm{\nabla \Xi_{n}}_{L^{2}(0,t;L^{2})}^{2} + C_{7}\norm{\Phi_{n}}_{L^{2}(0,t;L^{2})}^{2},  \\
\abs{J_{2}} & \leq 2 C_{p}^{2} \left ( \PP \norm{\Sigma_{n}}_{L^{2}(0,t;L^{2})} + \alpha \norm{w}_{L^{2}(0,t;L^{2})} \right )^{2} + \frac{1}{8C_{p}^{2}} \norm{\Xi_{n}}_{L^{2}(0,t;L^{2})}^{2} \\
& \leq \frac{1}{4} \norm{\nabla \Xi_{n}}_{L^{2}(0,t;L^{2})}^{2} + C_{8} \left ( \norm{\Phi_{n}}_{L^{2}(0,t;L^{2})}^{2} + \norm{\Sigma_{n}}_{L^{2}(0,t;L^{2})}^{2} + \norm{w}_{L^{2}(0,t;L^{2})}^{2} \right ),
\end{align*}
where $C_{7}, C_{8} > 0$ are positive constants depending only on $C_{*}$, $A$, $\PP$, $\AA$, and $\alpha$.  To estimate $J_{1}$ we first obtain an estimate for $\norm{\pd_{t}\Phi_{n}}_{L^{2}(0,t(H^{1})^{*})}$ by considering $v \in L^{2}(0,T;H^{1})$ in \eqref{Linearized:varphi} and integrating over $[0,t]$.  Then, we obtain that
\begin{equation}\label{Linearized:timederivative}
\begin{aligned}
\norm{\pd_{t}\Phi_{n}}_{L^{2}(0,t;(H^{1})^{*})} & \leq \norm{\nabla \Xi_{n}}_{L^{2}(0,t;L^{2})} + \PP \norm{\Sigma_{n}}_{L^{2}(0,t;L^{2})} + \alpha \norm{w}_{L^{2}(0,t;L^{2})}  \\
& + C_{*} \left (\PP + \AA + \alpha \right ) \norm{\Phi_{n}}_{L^{2}(0,t;L^{2})}.
\end{aligned}
\end{equation}
Thus, for $J_{1}$ we have
\begin{align*}
\abs{J_{1}}& \leq AC_{*} \left ( \norm{\Phi_{n}}_{L^{2}(0,t;L^{2})} + \norm{\nabla \Phi_{n}}_{L^{2}(0,t;L^{2})} \right ) \norm{\pd_{t}\Phi_{n}}_{L^{2}(0,t;(H^{1})^{*})} \\
& \leq C_{9} \left (\norm{\Phi_{n}}_{L^{2}(0,t;L^{2})}^{2} + \norm{\nabla \Phi}_{L^{2}(0,t;L^{2})}^{2} \right ) +  \frac{1}{4} \norm{\nabla \Xi_{n}}_{L^{2}(0,t;L^{2})}^{2} \\
& + C_{9} \left ( \norm{\Sigma_{n}}_{L^{2}(0,t;L^{2})}^{2} + \norm{w}_{L^{2}(0,t;L^{2})}^{2} + \norm{\Phi_{n}}_{L^{2}(0,t;L^{2})}^{2} \right ),
\end{align*}
where $C_{9} > 0$ is a positive constant depending only in $A$, $\PP$, $\alpha$, $C_{*}$, and $\AA$. Returning to \eqref{Linearized:apriori} we have
\begin{align*}
B \norm{\nabla \Phi_{n}(t)}_{L^{2}}^{2} & - C_{9} \norm{\nabla \Phi_{n}}_{L^{2}(0,t;L^{2})}^{2} + \frac{1}{4} \norm{\nabla \Xi_{n}}_{L^{2}(0,t;L^{2})}^{2} \\
& \leq C(C_{7},C_{8},C_{9}) \left ( \norm{\Sigma_{n}}_{L^{2}(0,t;L^{2})}^{2} + \norm{w}_{L^{2}(0,t;L^{2})}^{2} + \norm{\Phi_{n}}_{L^{2}(0,t;L^{2})}^{2} \right ).
\end{align*}
Applying the integral form of Gronwall's inequality and recalling \eqref{meanvalue:XiL2L2} and \eqref{Linearized:timederivative}, we find that
\begin{equation}\label{Linearized:est4}
\begin{aligned}
\norm{\Phi_{n}}_{L^{\infty}(0,T;H^{1})}^{2} + \norm{\Xi_{n}}_{L^{2}(0,T;H^{1})}^{2} + \norm{\pd_{t}\Phi_{n}}_{L^{2}(0,T;(H^{1})^{*})} \leq D_{4} \norm{w}_{L^{2}(Q)}^{2},
\end{aligned}
\end{equation}
where $D_{4}$ is a positive constant not depending on $n$, and so
\begin{align*}
\{\Xi_{n}\}_{n \in \N} & \text{ is bounded uniformly in } L^{2}(0,T;H^{1}), \\
\{\Phi_{n}\}_{n \in \N} & \text{ is bounded uniformly in } L^{\infty}(0,T;H^{1}) \cap H^{1}(0,T;(H^{1})^{*}).
\end{align*}
Furthermore, as $\Xi_{n} - A \Psi''(\overline{\varphi}) \Phi_{n} \in H^{1}$ for a.e. $t \in (0,T)$, applying elliptic regularity to \eqref{Linearized:mu} yields that
\begin{equation}\label{Linearized:est5}
\begin{aligned}
\norm{\Phi_{n}}_{L^{2}(0,T;H^{3})}^{2} \leq C \left ( \norm{\Xi_{n}}_{L^{2}(0,T;H^{1})}^{2} + \norm{\Phi_{n}}_{L^{2}(0,T;H^{1})}^{2} \right ) \leq D_{5} \norm{w}_{L^{2}(Q)}^{2},
\end{aligned}
\end{equation}
where $C$ and $D_{5}$ are positive constants not depending on $n$.  This implies that
\begin{align*}
\{\Phi_{n}\}_{n \in \N} \text{ is bounded uniformly in } L^{2}(0,T;H^{3}).
\end{align*}
The a priori estimates \eqref{Linearized:est1}, \eqref{Linearized:est2}, \eqref{Linearized:est3}, \eqref{Linearized:est4} and \eqref{Linearized:est5} imply that $(\Phi_{n}, \Xi_{n}, \Sigma_{n})$ can be extended to the interval $[0,T]$, and thus $t_{n} = T$ for each $n \in \N$.  Furthermore, there exists a relabelled subsequence such that
\begin{alignat*}{3}
\Phi_{n} & \to \Phi \text{ weakly*} && \text{ in } L^{\infty}(0,T;H^{1}) \cap L^{2}(0,T;H^{3}) \cap H^{1}(0,T;(H^{1})^{*}), \\
\Xi_{n} & \to \Xi \text{ weakly } && \text{ in }  L^{2}(0,T;H^{1}), \\
\Sigma_{n} & \to \Sigma \text{ weakly* } && \text{ in }  L^{\infty}(0,T;L^{2}) \cap L^{2}(0,T;H^{2}) \cap H^{1}(0,T;L^{2}),
\end{alignat*}
and a standard argument shows that the limit functions $(\Phi, \Xi, \Sigma)$ satisfy \eqref{weak:Linearized}.

%
%

\paragraph{Uniqueness.}
Let $(\Phi_{i}, \Xi_{i}, \Sigma_{i})_{i =1,2}$ denote two weak solution triplets to \eqref{eq:linearstate} with the same data $w \in L^{2}(Q)$.  Then, as \eqref{eq:linearstate} is linear in $(\Phi, \Xi, \Sigma)$, the differences $\Phi := \Phi_{1} - \Phi_{2}$, $\Xi := \Xi_{1} - \Xi_{2}$ and $\Sigma := \Sigma_{1} - \Sigma_{2}$ satisfy \eqref{eq:linearstate} with $w = 0$.  Due to the regularity of the solutions, the derivation of \eqref{Linearized:est1}, \eqref{Linearized:est2} and \eqref{Linearized:est3} remain valid, which implies that
\begin{align*}
\norm{\Phi}_{L^{\infty}(0,T;L^{2})}^{2} +  \norm{\Sigma}_{L^{\infty}(0,T;H^{1}) \cap H^{1}(0,T;L^{2}) \cap L^{2}(0,T;H^{2})}^{2} \leq 0,
\end{align*}
and so $\Phi = \Sigma = 0$.  Substituting $\Phi = 0$ in \eqref{weak:Linearized:mu} yields that $\Xi = 0$.

\subsection{Fr\'{e}chet differentiability with respect to the control}
In this section, we use the notation $\varphi^{w} = \hat{\varphi}$, $\mu^{w} = \hat{\mu}$, $\sigma^{w} = \hat{\sigma}$.  The remainders $(\theta^{w}, \rho^{w}, \xi^{w})$ from \eqref{Fdiff:remainders} satisfy
\begin{equation*}
\begin{aligned}
0 & = \inner{\pd_{t}\theta^{w}}{\zeta}_{H^{1}} + \int_{\Omega} \nabla \rho^{w} \cdot \nabla \zeta - h(\varphi^{w}) (\PP \sigma^{w} - \AA - \alpha (\overline{u} + w) ) \zeta \dx \\
 & + \int_{\Omega} h(\overline{\varphi}) (\PP (\overline{\sigma} + \Sigma^{w}) - \AA - \alpha (\overline{u} + w)) \zeta + h'(\overline{\varphi}) \Phi^{w} (\PP \overline{\sigma} - \AA - \alpha \overline{u}) \zeta \dx, \\
0 & = \int_{\Omega} \rho^{w} \zeta - B \nabla \theta^{w} \cdot \nabla \zeta - A \left ( \Psi'(\varphi^{w}) - \Psi'(\overline{\varphi}) - \Psi''(\overline{\varphi}) \Phi^{w} \right ) \zeta \dx, \\
0 & = \int_{\Omega} \pd_{t} \xi^{w} \zeta + \nabla \xi^{w} \cdot \nabla \zeta + \BB \xi^{w} \zeta + \CC \left ( h(\varphi^{w}) \sigma^{w} - h(\overline{\varphi}) (\overline{\sigma} + \Sigma^{w})- h'(\overline{\varphi}) \overline{\sigma} \Phi^{w} \right ) \zeta \dx,
\end{aligned}
\end{equation*}
for a.e. $t \in (0,T)$ and for all $\zeta \in H^{1}$ with
\begin{align*}
\theta^{w}(0) = 0, \quad \xi^{w}(0) = 0.
\end{align*}
Using the Taylor's theorem with integral remainder \eqref{Taylor} we see that
\begin{align*}
f(\varphi^{w}) = f(\overline{\varphi}) + f'(\overline{\varphi})(\varphi^{w} - \overline{\varphi}) + (\varphi^{w} - \overline{\varphi})^{2} \int_{0}^{1} f''(\overline{\varphi} + z (\varphi^{w} - \overline{\varphi})) \dz,
\end{align*}
and so for $\varphi^{w} - \overline{\varphi} = \Phi^{w} + \theta^{w}$, we have
\begin{align*}
\Psi'(\varphi^{w}) - \Psi'(\overline{\varphi}) - \Psi''(\overline{\varphi}) \Phi^{w} & = \Psi''(\overline{\varphi}) \theta^{w} + (\varphi^{w} - \overline{\varphi})^{2} R_{1}^{w}, \\
h(\varphi^{w}) - h(\overline{\varphi}) - h'(\overline{\varphi}) \Phi^{w} & = h'(\overline{\varphi}) \theta^{w} + (\varphi^{w} - \overline{\varphi})^{2} R_{2}^{w},
\end{align*}
where
\begin{align*}
R_{1}^{w} := \int_{0}^{1} \Psi'''(\overline{\varphi} + z (\varphi^{w} - \overline{\varphi})) (1-z) \dz, \quad R_{2}^{w} := \int_{0}^{1} h''(\overline{\varphi} + z (\varphi^{w} - \overline{\varphi})) (1-z) \dz.
\end{align*}
Thanks to the fact that $\overline{\varphi}, \varphi^{w} \in C^{0}(\overline{Q})$ and the continuity of $\Psi'''$ and $h''$, we see that there exists a constant $C_{**} > 0$ such that
\begin{align}\label{Psi'''h''bdd}
\norm{R_{1}^{w}}_{L^{\infty}(Q)} + \norm{R_{2}^{w}}_{L^{\infty}(Q)} \leq C_{**}.
\end{align}
Furthermore, we can express
\begin{equation}
\begin{aligned}
& h(\varphi^{w}) \sigma^{w} - h(\overline{\varphi}) \overline{\sigma} - h(\overline{\varphi}) \Sigma^{w} - h'(\overline{\varphi}) \Phi^{w} \overline{\sigma} \\
& \quad = (h(\varphi^{w}) - h(\overline{\varphi}))(\sigma^{w} - \overline{\sigma}) + \overline{\sigma} (h(\varphi^{w}) - h(\overline{\varphi}) - h'(\overline{\varphi}) \Phi^{w}) + h(\overline{\varphi}) (\sigma^{w} - \overline{\sigma} - \Sigma^{w}) \\
& \quad = (h(\varphi^{w}) - h(\overline{\varphi}))(\sigma^{w} - \overline{\sigma}) + \overline{\sigma} (h'(\overline{\varphi}) \theta^{w} + (\varphi^{w} - \overline{\varphi})^{2} R_{2}^{w}) + h(\overline{\varphi}) \xi^{w}.
\end{aligned}
\end{equation}
Let $X^{w} := \PP \sigma^{w} - \AA - \alpha (\overline{u} + w)$ and $\overline{X} := \PP \overline{\sigma} - \AA - \alpha \overline{u}$.  Then, it holds similarly that
\begin{equation}
\begin{aligned}
& h(\varphi^{w}) X^{w} - h(\overline{\varphi}) \overline{X} - h(\overline{\varphi})(\PP \Sigma^{w} - \alpha w) - h'(\overline{\varphi}) \Phi^{w} \overline{X} \\
& \quad = (h(\varphi^{w}) - h(\overline{\varphi}))(X^{w} - \overline{X}) + \overline{X}(h(\varphi^{w}) - h(\overline{\varphi}) - h'(\overline{\varphi}) \Phi^{w}) \\
& \quad + h(\overline{\varphi})(X^{w} - \overline{X} - \PP \Sigma + \alpha w) \\
& \quad = (h(\varphi^{w}) - h(\overline{\varphi}))(X^{w} - \overline{X}) + \overline{X}(h'(\overline{\varphi}) \theta^{w} + R_{2}^{w}(\varphi^{w} - \overline{\varphi})^{2}) + h(\overline{\varphi}) \PP \xi^{w},
\end{aligned}
\end{equation}
and thus, we see that $(\theta^{w}, \rho^{w}, \xi^{w})$ satisfy
\begin{subequations}
\begin{align}
0 & = \inner{\pd_{t}\theta^{w}}{\zeta}_{H^{1}} + \int_{\Omega} \nabla \rho^{w} \cdot \nabla \zeta -  (h(\varphi^{w}) - h(\overline{\varphi}))(X^{w} - \overline{X}) \zeta \dx \label{Fdiff:varphi} \\
\notag & + \int_{\Omega} (\overline{X} (h'(\overline{\varphi}) \theta^{w} + (\varphi^{w} - \overline{\varphi})^{2}R_{2}^{w}) + h(\overline{\varphi}) \PP \xi^{w}) \zeta \dx, \\
0 & = \int_{\Omega} \rho^{w} \zeta - B \nabla \theta^{w} \cdot \nabla \zeta - A (\Psi''(\overline{\varphi}) \theta^{w} + (\varphi^{w} - \overline{\varphi})^{2} R_{1}^{w}) \zeta \dx, \label{Fdiff:mu} \\
0 & = \int_{\Omega} \pd_{t} \xi^{w} \zeta + \nabla \xi^{w} \cdot \nabla \zeta + \BB \xi^{w} \zeta + \CC (h(\varphi^{w}) - h(\overline{\varphi}))(\sigma^{w} - \overline{\sigma}) \zeta \dx \label{Fdiff:sigma} \\
\notag & + \int_{\Omega} \CC (\overline{\sigma} (h'(\overline{\varphi}) \theta^{w} + (\varphi^{w} - \overline{\varphi})^{2} R_{2}^{w}) + h(\overline{\varphi}) \xi^{w}) \zeta \dx,
\end{align}
\end{subequations}
for a.e. $t \in (0,T)$ and for all $\zeta \in H^{1}$.  

\paragraph{First estimate.}  Let us first compute the following preliminary estimates, using the continuous dependence estimate \eqref{solution:ctsdep:s}, the Lipschitz continuity of $h$, H\"{o}lder's inequality, Young's inequality and the embedding $L^{2}(0,T;H^{2}) \subset L^{2}(0,T;L^{\infty})$, we have that
\begin{align*}
& \int_{0}^{s} \int_{\Omega} \CC \abs{h(\varphi^{w}) - h(\overline{\varphi})} \abs{\sigma^{w} - \overline{\sigma}} \abs{\xi^{w}} \dx \dt \leq \CC L_{h} \int_{0}^{s} \norm{\xi^{w}}_{L^{2}} \norm{\sigma^{w} - \overline{\sigma}}_{L^{2}} \norm{\varphi^{w} - \overline{\varphi}}_{L^{\infty}} \dt \\
& \quad \leq \CC L_{h}C_{\mathrm{Sob}} \norm{\xi^{w}}_{L^{2}(0,s;L^{2})} \norm{\sigma^{w} - \overline{\sigma}}_{L^{\infty}(0,s;L^{2})} \norm{\varphi^{w} - \overline{\varphi}}_{L^{2}(0,s;H^{2})} \\
& \quad \leq C_{10} \norm{w}_{L^{2}(0,s;L^{2})}^{4} + \frac{1}{4}\norm{\xi^{w}}_{L^{2}(0,s;L^{2})},
\end{align*}
where $C_{10}$ is a positive constant depending only on $C_{\mathrm{cts}}$, $C_{\mathrm{Sob}}$ and $L_{h}$.  Meanwhile, using the boundedness of $\overline{\sigma}$, $h'(\overline{\varphi})$ and $R_{2}^{w}$ in $Q$, we see that
\begin{align*}
& \int_{0}^{s} \int_{\Omega} \CC \abs{\overline{\sigma}} \abs{ h'(\overline{\varphi}) \theta^{w} \xi^{w} + (\varphi^{w} - \overline{\varphi})^{2} R_{2}^{w} \xi^{w}} \dx \dt \\
& \quad \leq \CC \int_{0}^{s} C_{*} \norm{\theta^{w}}_{L^{2}} \norm{\xi^{w}}_{L^{2}} + C_{**} \norm{\varphi^{w} - \overline{\varphi}}_{L^{\infty}} \norm{\varphi^{w} - \overline{\varphi}}_{L^{2}} \norm{\xi^{w}}_{L^{2}} \dt \\
& \quad \leq 2\CC^{2}C_{*}^{2} \norm{\theta^{w}}_{L^{2}(0,s;L^{2})}^{2} + \frac{2}{8}\norm{\xi^{w}}_{L^{2}(0,s;L^{2})}^{2} + 2 \CC^{2} C_{**}^{2} \norm{\varphi^{w} - \overline{\varphi}}_{L^{\infty}(0,s;L^{2})}^{2} \norm{\varphi^{w} - \overline{\varphi}}_{L^{2}(0,s;L^{\infty})}^{2} \\
& \quad \leq 2 \CC^{2} C_{*}^{2} \norm{\theta^{w}}_{L^{2}(0,s;L^{2})}^{2} + \frac{1}{4}\norm{\xi^{w}}_{L^{2}(0,s;L^{2})}^{2} + 2C_{**}^{2} \CC^{2} C_{\mathrm{cts}}^{2} \norm{w}_{L^{2}(0,s;L^{2})}^{4}.
\end{align*}
Thus, when we substitute $\zeta = \xi^{w}$ in \eqref{Fdiff:sigma}, integrating over $[0,s]$ for $s \in (0,T]$, and neglecting the nonnegative term $\BB \abs{\xi^{w}}^{2} + \CC h(\overline{\varphi}) \abs{\xi^{w}}^{2}$, we obtain
\begin{equation}\label{Fdiff:est1}
\begin{aligned}
& \frac{1}{2} \norm{\xi^{w}(s)}_{L^{2}}^{2} + \norm{\nabla \xi^{w}}_{L^{2}(0,s;L^{2})}^{2} \\
& \quad \leq  \CC\int_{0}^{s} \int_{\Omega} (h(\overline{\varphi}) - h(\varphi^{w}))(\sigma^{w} - \overline{\sigma}) \xi^{w} + \overline{\sigma} (h'(\overline{\varphi}) \theta^{w} \xi^{w} + (\varphi^{w} - \overline{\varphi}^{2}) R_{2}^{w} \xi^{w}) \dx \dt \\
& \quad \leq (C_{10} + 2 C_{**}^{2} \CC^{2} C_{\mathrm{cts}}^{2}) \norm{w}_{L^{2}(0,s;L^{2})}^{4} +  2\CC^{2} C_{*}^{2} \norm{\theta^{w}}_{L^{2}(0,s;L^{2})}^{2} + \frac{1}{2} \norm{\xi^{w}}_{L^{2}(0,s;L^{2})}^{2}.
\end{aligned}
\end{equation}
Next, substituting $\zeta = \theta^{w}$ in \eqref{Fdiff:varphi}, $\zeta = \theta^{w}$ in \eqref{Fdiff:mu} and $\zeta = \frac{1}{B} \rho^{w}$ in \eqref{Fdiff:mu}, integrating by parts and integrating over $[0,s]$ for $s \in (0,T]$, and upon adding leads to
\begin{equation}\label{Fdiff:est2:prelim}
\begin{aligned}
& \frac{1}{2} \norm{\theta^{w}(s)}_{L^{2}}^{2} + B \norm{\nabla \theta^{w}}_{L^{2}(0,s;L^{2})}^{2} + \frac{1}{B} \norm{\rho^{w}}_{L^{2}(0,s;L^{2})}^{2} \\
& \quad \leq \int_{0}^{s} \int_{\Omega} \abs{(\Psi''(\overline{\varphi}) \theta^{w} + (\varphi^{w} - \overline{\varphi})^{2} R_{1}^{w})} \abs{A \theta^{w} + \frac{A}{B} \rho^{w}} + \abs{\rho^{w} \theta^{w}} \dx \dt \\
& \quad + \int_{0}^{s} \int_{\Omega} \abs{(h(\varphi^{w}) - h(\overline{\varphi}))(X^{w} - \overline{X}) \theta^{w}} \dx \dt \\
& \quad + \int_{0}^{s} \int_{\Omega} \abs{(\overline{X}(h'(\overline{\varphi}) \theta^{w} + (\varphi^{w} - \overline{\varphi})^{2} R_{2}^{w}) + h(\overline{\varphi}) \PP \xi^{w})} \abs{\theta^{w}} \dx \dt =: K_{1} + K_{2} + K_{3}.
\end{aligned}
\end{equation}
Using \eqref{solution:ctsdep:s}, H\"{o}lder's inequality, Young's inequality, the boundedness of $\Psi''(\overline{\varphi})$ and $R_{1}^{w}$ in $Q$, we have
\begin{align*}
K_{1} & \leq \frac{1}{4B} \norm{\rho^{w}}_{L^{2}(0,s;L^{2})}^{2} + B \norm{\theta^{w}}_{L^{2}(0,s;L^{2})}^{2} \\ & + \left ( C_{*} \norm{\theta^{w}}_{L^{2}(0,s;L^{2})} + C_{**} C_{\mathrm{cts}}^{2}\norm{w}_{L^{2}(0,s;L^{2})}^{2} \right ) \left (A \norm{\theta^{w}}_{L^{2}(0,s;L^{2})} + \frac{A}{B} \norm{\rho^{w}}_{L^{2}(0,s;L^{2})} \right ) \\
& \leq \frac{1}{2B} \norm{\rho^{w}}_{L^{2}(0,s;L^{2})}^{2} + C_{11} \left ( \norm{\theta^{w}}_{L^{2}(0,s;L^{2})}^{2} + \norm{w}_{L^{2}(0,s;L^{2})}^{4} \right ),
\end{align*}
where $C_{11}$ is a positive constant depending only on $C_{*}$, $C_{**}$, $C_{\mathrm{cts}}$, $A$ and $B$.  Meanwhile, by the Lipschitz continuity of $h$, and the fact that
\begin{align*}
X^{w} - \overline{X} = \PP (\sigma^{w} - \overline{\sigma}) - \alpha w,
\end{align*}
we see that
\begin{align*}
K_{2} & \leq \int_{0}^{s} \int_{\Omega} L_{h} \abs{\varphi^{w} - \overline{\varphi}} \abs{\PP (\sigma^{w} - \overline{\sigma}) - \alpha w} \abs{\theta^{w}} \dx \dt \\
& \leq L_{h} \PP \norm{\varphi^{w} - \overline{\varphi}}_{L^{2}(0,s;L^{\infty})} \norm{\sigma^{w} - \overline{\sigma}}_{L^{\infty}(0,s;L^{2})} \norm{\theta^{w}}_{L^{2}(0,s;L^{2})} \\
& + L_{h} \alpha \norm{w}_{L^{2}(0,s;L^{2})} \norm{\varphi^{w} - \overline{\varphi}}_{L^{\infty}(0,s;L^{3})} \norm{\theta^{w}}_{L^{2}(0,s;L^{6})} \\
& \leq C_{12} \norm{w}_{L^{2}(0,s;L^{2})}^{4} + \frac{B}{2} \left ( \norm{\theta^{w}}_{L^{2}(0,s;L^{2})}^{2} + \norm{\nabla \theta^{w}}_{L^{2}(0,s;L^{2})}^{2} \right ),
\end{align*}
for some positive constant $C_{12}$ depending only on $B$, $L_{h}$, $\PP$ and $\alpha$.  Furthermore, using the boundedness of $\overline{X}$, $h'(\overline{\varphi})$, $R_{2}^{w}$ and $h(\overline{\varphi})$ in $Q$, we have
\begin{align*}
K_{3} & \leq (\PP + \AA + \alpha) C_{*} \norm{\theta^{w}}_{L^{2}(0,s;L^{2})}^{2} + \PP \norm{\xi^{w}}_{L^{2}(0,s;L^{2})} \norm{\theta^{w}}_{L^{2}(0,s;L^{2})} \\
& + (\PP + \AA + \alpha) C_{**} \norm{\varphi^{w} - \overline{\varphi}}_{L^{2}(0,s;L^{\infty})} \norm{\varphi^{w} - \overline{\varphi}}_{L^{\infty}(0,s;L^{2})} \norm{\theta^{w}}_{L^{2}(0,s;L^{2})} \\
& \leq \left (C_{u}C_{*} + \frac{1}{2} \right ) \norm{\theta^{w}}_{L^{2}(0,s;L^{2})}^{2} + \PP^{2} \norm{\xi^{w}}_{L^{2}(0,s;L^{2})}^{2} + C_{u}^{2} C_{**}^{2} C_{\mathrm{cts}}^{2} \norm{w}_{L^{2}(0,s;L^{2})}^{4},
\end{align*}
where we recall $C_{u} = (\PP + \AA + \alpha)$.  Substituting the above estimates into \eqref{Fdiff:est2:prelim} we obtain
\begin{equation}\label{Fdiff:est2}
\begin{aligned}
& \frac{1}{2} \norm{\theta^{w}(s)}_{L^{2}}^{2} + \frac{B}{2} \norm{\nabla \theta^{w}}_{L^{2}(0,s;L^{2})}^{2} + \frac{1}{2B} \norm{\rho^{w}}_{L^{2}(0,s;L^{2})}^{2} \\
& \quad  \leq \left (C_{11} + C_{u} C_{*} + \frac{B+1}{2} \right ) \norm{\theta^{w}}_{L^{2}(0,s;L^{2})}^{2} + (C_{11} + C_{12} + C_{u}^{2} C_{**}^{2} C_{\mathrm{cts}}^{2} ) \norm{w}_{L^{2}(0,s;L^{2})}^{4} \\
& \quad + \PP^{2} \norm{\xi^{w}}_{L^{2}(0,s;L^{2})}^{2}.
\end{aligned}
\end{equation}
Then, adding \eqref{Fdiff:est1} and \eqref{Fdiff:est2} we have for $s \in (0,T]$,
\begin{equation}\label{Fdiff:est3}
\begin{aligned}
& \norm{\xi^{w}(s)}_{L^{2}}^{2} + \norm{\nabla \xi^{w}}_{L^{2}(0,s;L^{2})}^{2} + \norm{\theta^{w}(s)}_{L^{2}}^{2} + \norm{\nabla \theta^{w}}_{L^{2}(0,s;L^{2})}^{2} + \norm{\rho^{w}}_{L^{2}(0,s;L^{2})}^{2} \\
& \quad  \leq C_{13} \norm{w}_{L^{2}(0,s;L^{2})}^{4} + C_{14} \left ( \norm{\theta^{w}}_{L^{2}(0,s;L^{2})}^{2} + \norm{\xi^{w}}_{L^{2}(0,s;L^{2})}^{2} \right ),
\end{aligned}
\end{equation}
where the positive constants $C_{13}$, $C_{14}$ depend only on $\CC$, $C_{\mathrm{cts}}$, $C_{*}$, $C_{**}$, $C_{10}$, $C_{11}$, $C_{12}$, $\PP$, $\AA$, $\alpha$, and $B$.  Applying Gronwall's inequality to \eqref{Fdiff:est3} we have that
\begin{equation}\label{Fdiff:1}
\begin{aligned}
& \norm{\xi^{w}(s)}_{L^{2}}^{2} + \norm{\theta^{w}(s)}_{L^{2}}^{2} \\
& \quad + \norm{\nabla \theta^{w}}_{L^{2}(0,s;L^{2})}^{2} + \norm{\nabla \xi^{w}}_{L^{2}(0,s;L^{2})}^{2} + \norm{\rho^{w}}_{L^{2}(0,s;L^{2})}^{2} \leq C_{15} \norm{w}_{L^{2}(0,s;L^{2})}^{4},
\end{aligned}
\end{equation}
for some positive constant $C_{15}$ depending only on $C_{13}$ and $C_{14}$.

\paragraph{Second estimate.}
Substituting $\zeta = \pd_{t} \xi^{w}$ in \eqref{Fdiff:sigma}, integrating over $[0,s]$ for $s \in (0,T]$ leads to
\begin{align*}
& \norm{\pd_{t} \xi^{w}}_{L^{2}(0,s;L^{2})}^{2} + \norm{\nabla \xi^{w}(s)}_{L^{2}}^{2} + \BB \norm{\xi^{w}(s)}_{L^{2}}^{2}  \leq \int_{0}^{s} \int_{\Omega} \CC \abs{h(\varphi^{w}) - h(\overline{\varphi})}\abs{\sigma^{w} - \overline{\sigma}} \abs{\pd_{t}\xi^{w}} \dx \dt \\
& \quad + \int_{0}^{s} \int_{\Omega} \CC  \abs{\overline{\sigma}(h'(\overline{\varphi}) \theta^{w} + (\varphi^{w} - \overline{\varphi})^{2} R_{2}^{w}) + h(\overline{\varphi}) \xi^{w}} \abs{\pd_{t} \xi^{w}} \dx \dt.
\end{align*}
Using the Lipschitz continuity of $h$, the boundedness of $\overline{\sigma}$, $h(\overline{\varphi})$, $h'(\overline{\varphi})$, and $R_{2}^{w}$ in $Q$, H\"{o}lder's inequality, Young's inequality and \eqref{Fdiff:1}, we obtain
\begin{equation}\label{Fdiff:2}
\begin{aligned}
& \frac{1}{2} \norm{\pd_{t}\xi^{w}}_{L^{2}(0,s;L^{2})}^{2} + \norm{\nabla \xi^{w}(s)}_{L^{2}}^{2} \\
& \quad \leq (\CC^{2} C_{\mathrm{cts}}^{2} L_{h}^{2} + C_{**}^{2} \CC^{2} C_{\mathrm{cts}}^{2}) \norm{w}_{L^{2}(0,s;L^{2})}^{4} + \CC^{2} C_{*}^{2} \norm{\theta^{w}}_{L^{2}(0,s;L^{2})}^{2} + \norm{\xi^{w}}_{L^{2}(0,s;L^{2})}^{2} \\
& \quad \leq C_{16} \norm{w}_{L^{2}(0,s;L^{2})}^{4},
\end{aligned}
\end{equation}
for some positive constant $C_{16}$ depending only on $T$, $L_{h}$, $\CC$, $C_{\mathrm{cts}}$, $C_{*}$, $C_{**}$ and $C_{15}$.

\paragraph{Third estimate.}
Viewing \eqref{Fdiff:mu} as the weak formulation of an elliptic problem for $\theta^{w}$, by elliptic regularity we obtain
\begin{align*}
\norm{\theta^{w}}_{L^{2}(0,s;H^{2})}^{2} & \leq C_{17} \left ( \norm{\rho^{w}}_{L^{2}((0,s;L^{2})}^{2} + \norm{\theta^{w}}_{L^{2}(0,s;L^{2})}^{2} \right ) \\
& + C_{17} \norm{A (\Psi''(\overline{\varphi}) \theta^{w} + (\varphi^{w} - \overline{\varphi})^{2} R_{1}^{w})}_{L^{2}(0,s;L^{2})}^{2},
\end{align*}
for some positive constant $C_{17}$ not depending on $\theta^{w}$, $\rho^{w}$ and $w$.  Applying \eqref{Fdiff:1}, the boundedness of $\Psi''(\overline{\varphi})$ and $R_{1}^{w}$ in $Q$, we have
\begin{align*}
\norm{\theta^{w}}_{L^{2}(0,s;H^{2})}^{2} \leq C_{18}\norm{w}_{L^{2}(0,s;L^{2})}^{4},
\end{align*}
for some positive constant $C_{18}$ depending only on $C_{15}$, $C_{17}$, $C_{*}$, $C_{**}$, $C_{\mathrm{cts}}$ and $A$.  Then, upon integrating \eqref{Fdiff:varphi} over $[0,s]$ for $s \in (0,T]$, integrating by parts then yields
\begin{align*}
\int_{0}^{s} \abs{\inner{\pd_{t}\theta^{w}}{\zeta}_{H^{1}}} \dt & \leq \int_{0}^{s} \int_{\Omega} \abs{\rho^{w}} \abs{\Laplace \zeta} + L_{h} \abs{\varphi^{w} - \overline{\varphi}} \abs{\PP (\sigma^{w} - \overline{\sigma}) + \alpha w} \abs{\zeta} \dx \dt \\
& + \int_{0}^{s} \int_{\Omega} \abs{\overline{X} (h'(\overline{\varphi}) \theta^{w} + (\varphi^{w} - \overline{\varphi})^{2} R_{2}^{w}) + h(\overline{\varphi}) \PP \xi^{w}} \abs{\zeta} \dx \dt \\
& =: L_{1} + L_{2}.
\end{align*}
By H\"{o}lder's inequality, the boundedness of $\overline{X} = \PP \overline{\sigma} - \AA - \alpha \overline{u}$, $h'(\overline{\varphi})$, $R_{2}^{w}$, and $h(\overline{\varphi})$ in $Q$, \eqref{solution:ctsdep:s}, and \eqref{Fdiff:1} we have that
\begin{align*}
L_{2} & \leq C_{u} \left ( C_{*} \norm{\theta^{w}}_{L^{2}(0,s;L^{2})} + C_{**} \norm{\varphi^{w} - \overline{\varphi}}_{L^{\infty}(0,s;L^{2})} \norm{\varphi^{w} - \overline{\varphi}}_{L^{2}(0,s;L^{\infty})} \right ) \norm{\zeta}_{L^{2}(0,s;L^{2})} \\
& + \PP \norm{\xi^{w}}_{L^{2}(0,s;L^{2})}\norm{\zeta}_{L^{2}(0,s;L^{2})} \\
& \leq C_{19} \norm{w}_{L^{2}(0,s;L^{2})}^{2} \norm{\zeta}_{L^{2}(0,s;L^{2})}
\end{align*}
for some positive constant $C_{19}$ depending only on $\PP$, $\AA$, $\alpha$, $C_{*}$, $C_{**}$, $C_{\mathrm{cts}}$, $T$ and $C_{15}$.  Meanwhile,
\begin{align*}
L_{1} & \leq \norm{\rho^{w}}_{L^{2}(0,s;L^{2})} \norm{\zeta}_{L^{2}(0,s;H^{2})} + L_{h} \PP \norm{\varphi^{w} - \overline{\varphi}}_{L^{2}(0,s;L^{\infty})} \norm{\sigma^{w} - \overline{\sigma}}_{L^{\infty}(0,s;L^{2})} \norm{\zeta}_{L^{2}(0,s;L^{2})} \\
& +  L_{h} \alpha \norm{w}_{L^{2}(0,s;L^{2})} \norm{\varphi^{w} - \overline{\varphi}}_{L^{\infty}(0,s;L^{3})} \norm{\zeta}_{L^{2}(0,s;L^{6})} \\
& \leq C_{20} \norm{w}_{L^{2}(0,s;L^{2})}^{2} \norm{\zeta}_{L^{2}(0,s;H^{2})}
\end{align*}
where $C_{20}$ is a positive constant depending only on $C_{15}$, $L_{h}$, $\PP$, $\alpha$, $C_{\mathrm{cts}}$ and $\Omega$ (via the Sobolev embedding $H^{1} \subset L^{6}$).  Hence, we see that
\begin{align*}
\norm{\pd_{t} \theta^{w}}_{L^{2}(0,s;(H^{2})^{*})} \leq (C_{19} + C_{20}) \norm{w}_{L^{2}(0,s;L^{2})}^{2}.
\end{align*}
By the continuous embedding $L^{2}(0,T;H^{2}) \cap H^{1}(0,T;(H^{2})^{*}) \subset C^{0}([0,T];L^{2})$, we find that there exists a positive constant $C_{21}$ depending only on $C_{18}$, $C_{19}$ and $C_{20}$ such that
\begin{align*}
\norm{\theta^{w}}_{L^{2}(0,s;H^{2}) \cap H^{1}(0,s;(H^{2})^{*}) \cap C^{0}([0,s];L^{2})} \leq C_{21} \norm{w}_{L^{2}(0,s;L^{2})}^{2} \quad \forall s \in (0,T].
\end{align*}
Combining this with \eqref{Fdiff:1} and \eqref{Fdiff:2} yields \eqref{Fdiff:u:thm:estimate}.

\subsection{Fr\'{e}chet differentiability of the objective functional with respect to time}\label{sec:Fdiff:wrt:tau}

In this section, we assume that Assumption \ref{assump:regularityU} holds.  Using the relation 
\begin{align}\label{timechange}
\int_{\tau-r}^{\tau} \int_{\Omega} f(s) \dx \ds = \int_{0}^{\tau} \int_{\Omega} f(s) - f(s-r) \dx \ds + \int_{-r}^{0} \int_{\Omega} f(s) \ds
\end{align}  
for $f \in L^{1}(-r,T;L^{1})$ and $\tau \in (0,T)$, we can define
\begin{align*}
F(t,\varphi)& := \frac{1}{2} \int_{\Omega} \beta_{Q} \abs{(\varphi - \varphi_{Q})(t)}^{2} + \frac{\beta_{\Omega}}{r} \left ( \abs{(\varphi - \varphi_{\Omega})(t)}^{2} - \abs{(\varphi - \varphi_{\Omega})(t-r)}^{2} \right ) \dx \\
& + \frac{1}{2} \int_{\Omega} \frac{\beta_{S}}{r} \left ( \varphi(t) - \varphi(t-r) \right ) \dx,
\end{align*} 
and upon setting $\varphi(t) = \varphi_{0}$ for $t \leq 0$, we can express \eqref{eq:cost} as
\begin{align*}
J_{r}(\varphi, u, \tau) &= \frac{\beta_{u}}{2} \norm{u}_{L^{2}(Q)}^{2} + \int_{-r}^{0} \int_{\Omega} \frac{\beta_{\Omega}}{2r} \abs{\varphi_{0} - \varphi_{\Omega}}^{2} + \frac{\beta_{S}}{2r} (1 + \varphi_{0}) \dx \dt \\
& +  \int_{0}^{\tau} F(t,\varphi) \dt + \beta_{T} \tau.
\end{align*}
Note that only the last two terms on the right-hand side are dependent on $\tau$, and thus the first three terms on the right-hand side will vanish when we compute the Fr\'{e}chet derivative of $J_{r}$ with respect to $\tau$.  We now compute for any $f \in H^{1}(0,T) \subset L^{\infty}(0,T)$, and $\tau \in (0,T)$, $h > 0$ such that $\tau + h \in (0,T)$,
\begin{align*}
& \abs{\int_{0}^{\tau+h} \abs{f(t)}^{2} \dt - \int_{0}^{\tau} \abs{f(t)}^{2} \dt - h \abs{f(\tau)}^{2}} = \abs{\int_{\tau}^{\tau+h} \abs{f(t)}^{2} - \abs{f(\tau)}^{2} \dt} \\
& \quad \leq \abs{\int_{\tau}^{\tau+h} \abs{f(t) - f(\tau)} \abs{f(t) + f(\tau)} \dt} \leq 2 \norm{f}_{L^{\infty}(0,T)} \abs{\int_{\tau}^{\tau+h} \abs{\int_{\tau}^{t} \pd_{t}f(s) \ds} \dt} \\
& \quad \leq 2 \norm{f}_{L^{\infty}(0,T)} \int_{\tau}^{\tau+h} \norm{\pd_{t}f}_{L^{2}(\tau, t)} (t-\tau)^{\frac{1}{2}} \dt \leq 2 h^{\frac{3}{2}} \norm{f}_{L^{\infty}(0,T)} \norm{\pd_{t}f}_{L^{2}(0,T)}.
\end{align*}
This shows that 
\begin{align*}
\der_{\tau} \left ( \int_{0}^{\tau} \abs{f(t)}^{2} \dt \right ) = \abs{f(\tau)}^{2},
\end{align*}
and a similar argument also yields
\begin{align*}
\der_{\tau} \left ( \int_{0}^{\tau} f(t) \dt \right ) = f(\tau).
\end{align*}
Using the fact that $\varphi_{Q} \in H^{1}(0,T;L^{2})$, $\varphi_{*}, \varphi_{\Omega} \in H^{1}(-r,T;L^{2})$, we obtain that the optimal control $(u_{*}, \Optime)$ satisfies
\begin{align}\label{FONC:time:variational}
\der_{\tau} \mathcal{J}(u_{*}, \tau_{*})(s - \tau_{*}) \geq 0 \quad \forall s \in [0,T],
\end{align}
where
\begin{align*}
\der_{\tau} \mathcal{J}( u_{*}, \Optime) & = \beta_{T} + \frac{\beta_{Q}}{2} \norm{\varphi_{*}(\Optime) - \varphi_{Q}(\Optime)}_{L^{2}}^{2} + \frac{\beta_{S}}{2r} \int_{\Omega} \varphi_{*}(\Optime) - \varphi_{*}(\Optime - r) \dx \\
& + \frac{\beta_{\Omega}}{2r} \left ( \norm{(\varphi_{*} - \varphi_{\Omega})(\Optime)}_{L^{2}}^{2} - \norm{(\varphi_{*} - \varphi_{\Omega})(\Optime - r)}_{L^{2}}^{2} \right ).
\end{align*}
We can simplify \eqref{FONC:time:variational} with the following argument.  If $\Optime \in (0,T)$, choose $s = \Optime \pm h$ for $h > 0$ to deduce that $\der_{\tau} \mathcal{J}(u_{*}, \Optime) = 0$.  If $\Optime = 0$, then from \eqref{FONC:time:variational} we obtain $\der_{\tau} \mathcal{J}(u_{*}, \Optime) \geq 0$.  Meanwhile, if $\Optime = T$, then $s - \Optime \leq 0$ for any $s \in [0,T]$, and thus $\der_{\tau} \mathcal{J}(u_{*}, \Optime) \leq 0$.

\section{First order necessary optimality conditions}\label{sec:FONC}

\subsection{Unique solvability of the adjoint system}
We apply a Galerkin approximation and consider a basis $\{w_{i}\}_{i \in \N}$ of $H^{2}$ that is orthonormal in $L^{2}$, and we look for functions of the form
\begin{align*}
p_{n}(x,t) := \sum_{i=1}^{n} P_{n,i}(t) w_{i}(x), \quad q_{n}(x,t) := \sum_{i=1}^{n} Q_{n,i}(t)w_{i}(x), \quad r_{n}(x,t) := \sum_{i=1}^{n} R_{n,i}(t) w_{i}(x),
\end{align*}
which satisfy
\begin{subequations}\label{adjoint:galerkin}
\begin{align}
\label{adjoint:galerkin:p} 0 & = \int_{\Omega} - \pd_{t}p_{n} v  - B \nabla q_{n} \cdot \nabla v - A \Psi''(\varphi) q_{n} v +  h'(\varphi) \left ( \CC\sigma r_{n} - (\PP \sigma - \AA - \alpha u) p_{n}v \right ) \dx \\
\notag & - \int_{\Omega} \left ( \beta_{Q}(\varphi - \varphi_{Q}) + \tfrac{1}{2r} \chi_{(\Optime - r, \Optime)}(t) \left ( 2 \beta_{\Omega} (\varphi - \varphi_{\Omega}) + \beta_{S} \right ) \right ) v  \dx, \\
\label{adjoint:galerkin:q} 0 & = \int_{\Omega} q_{n}v + \nabla p_{n} \cdot \nabla v \dx, \\
\label{adjoint:galerkin:r} 0 & = \int_{\Omega} - \pd_{t}r_{n}v + \nabla r_{n} \cdot \nabla v + (\BB + \CC h(\varphi)) r_{n}v - \PP h(\varphi) p_{n} v \dx,
\end{align}
\end{subequations}
for all $v \in W_{n} := \mathrm{span}\{w_{1}, \dots, w_{n}\}$.  Substituting $v = w_{j}$ leads to
\begin{subequations}\label{adjoint:ODE}
\begin{align}
 \bm{P}_{n}'(t) & = -B \bm{S} \bm{Q}_{n}(t) - \bm{Z}_{n}(t) - \chi_{(\Optime - r, \Optime)}(t) \bm{G}_{n}(t), \quad \bm{Q}_{n}(t) = - \bm{S} \bm{P}_{n}(t), \\
\bm{R}_{n}'(t) & = \bm{S} \bm{R}_{n}(t) + \BB \bm{R}_{n}(t) + \bm{Y}_{n}(t),
\end{align}
\end{subequations}
where $\bm{S}$ is defined in \eqref{ODE:vectormatrix}, and
\begin{align*}
(\bm{G}_{n})_{j} & := \int_{\Omega} \frac{1}{2r} \left ( 2 \beta_{\Omega}(\varphi - \varphi_{\Omega}) + \beta_{S}) \right ) w_{j} \dx, \\
(\bm{Z}_{n})_{j} & := \int_{\Omega} \left ( A \Psi''(\varphi) q_{n} -  h'(\varphi) \left ( \CC \sigma r_{n}  - (\PP \sigma - \AA - \alpha u) p_{n} \right ) + \beta_{Q}(\varphi - \varphi_{Q}) \right )w_{j} \dx, \\
(\bm{Y}_{n})_{j} & := \int_{\Omega} \CC h(\varphi) r_{n} w_{j} - \PP h(\varphi) p_{n} w_{j} \dx,
\end{align*}
and we supplement the above backward-in-time system of ODEs with the condition
\begin{align*}
r_{n}(\Optime) = 0, \quad p_{n}(\Optime) = 0.
\end{align*}
Once again, we consider approximating sequences in $C^{0}([0,T];L^{2})$ for $u$, $\varphi_{Q}$ and $\varphi_{\Omega}$ and use the same variables to denote the approximating functions.  Note that the right-hand side of \eqref{adjoint:ODE} depends continuously on $(\bm{P}_{n}, \bm{Q}_{n}, \bm{R}_{n})$ but due to the term $\chi_{(\Optime-r,\Optime)}(t) \bm{G}_{n}$ in the equation for $\bm{P}_{n}'$, we cannot apply the Cauchy--Peano theorem directly.  But we can consider first solving \eqref{adjoint:ODE} on the interval $(\Optime-r,\Optime]$, that is, $\bm{P}_{n}$ and $\bm{R}_{n}$ satisfy
\begin{equation}\label{adjoint:ODE:part1}
\begin{alignedat}{4}
\bm{P}_{n}'(t) & = B \bm{S}^{2} \bm{P}_{n}(t) - \bm{Z}_{n}(t) - \bm{G}_{n}(t), && \quad \bm{P}_{n}(\Optime) = 0, \\
\bm{R}_{n}'(t) & = \bm{S} \bm{R}_{n}(t) + \BB \bm{R}_{n}(t) + \bm{Y}_{n}(t), && \quad \bm{R}_{n}(\Optime) = 0,
\end{alignedat}
\end{equation}
for $t \in (\Optime-r,\Optime]$, which would yield, via the Cauchy--Peano theorem, the existence of $t_{n} \in [\Optime-r,\Optime)$ and a local solution pair $(\bm{P}_{n}, \bm{R}_{n}) \in \left ( C^{1}((t_{n}, \Optime]; \R^{n}) \right )^{2}$ to \eqref{adjoint:ODE:part1}.  The a priori estimates derived below will allow us to deduce that $(\bm{P}_{n}, \bm{R}_{n})$ can be extended to $\Optime-r$, that is, $t_{n} = \Optime - r$ for all $n \in \N$.  Then, we then extend the solutions by solving the system
\begin{equation}\label{adjoint:ODE:part2}
\begin{alignedat}{4}
\bm{P}_{n}'(t) & = B \bm{S}^{2} \bm{P}_{n}(t) - \bm{Z}_{n}(t), \\
\bm{R}_{n}'(t) & = \bm{S} \bm{R}_{n}(t) + \BB \bm{R}_{n}(t) + \bm{Y}_{n}(t),
\end{alignedat}
\end{equation}
with terminal conditions at time $\Optime-r$.  Overall, this procedure yields functions $p_{n}, q_{n}, r_{n} \in C^{1}((t_{n}, \Optime];W_{n})$ satisfying \eqref{adjoint:galerkin} for some $t_{n} \in [0,\Optime)$.  We now derive the a priori estimates.

\paragraph{First estimate.}
Substituting $v = r_{n}$ in \eqref{adjoint:galerkin:r} and integrating over $[s, \Optime]$ for $s \in (0,\Optime)$ leads to
\begin{align}\label{adjoint:est:1}
\frac{1}{2} \norm{r_{n}(s)}_{L^{2}}^{2} + \norm{\nabla r_{n}}_{L^{2}(s,\Optime;L^{2})}^{2} \leq \PP \norm{p_{n}}_{L^{2}(s,\Optime;L^{2})} \norm{r_{n}}_{L^{2}(s,\Optime;L^{2})},
\end{align}
where we neglected the nonnegative term $\BB \abs{r_{n}}^{2} + \CC h(\varphi) \abs{r_{n}}^{2}$ and used the boundedness of $h$, and $r_{n}(\Optime) = 0$.  Then, substituting $v = p_{n}$ in \eqref{adjoint:galerkin:p} and $v = Bq_{n}$ in \eqref{adjoint:galerkin:q}, integrating over $[s,\Optime]$ for $s \in (0,\Optime)$ and summing leads to
\begin{equation}\label{adjoint:est:2}
\begin{aligned}
& \frac{1}{2} \norm{p_{n}(s)}_{L^{2}}^{2} + B \norm{q_{n}}_{L^{2}(s,\Optime;L^{2})}^{2} \\
& \quad \leq A C_{*} \norm{q_{n}}_{L^{2}(s,\Optime;L^{2})} \norm{p_{n}}_{L^{2}(s,\Optime;L^{2})} + \CC C_{*} \norm{r_{n}}_{L^{2}(s,\Optime;L^{2})} \norm{p_{n}}_{L^{2}(s,\Optime;L^{2})} \\
& \quad + (\PP + \AA + \alpha) C_{*} \norm{p_{n}}_{L^{2}(s,\Optime;L^{2})}^{2} \\
& \quad + \left ( \norm{ \beta_{Q}(\varphi - \varphi_{Q})}_{L^{2}(Q)} + \tfrac{1}{2r}  \norm{2 \beta_{\Omega}(\varphi - \varphi_{\Omega}) + \beta_{S}}_{L^{2}(Q)} \right ) \norm{p_{n}}_{L^{2}(s,\Optime;L^{2})},
\end{aligned}
\end{equation}
where we used that $h(\varphi) \leq 1$, $\sigma \leq 1$, $u \leq 1$ a.e. in $Q$, and \eqref{BoundingConst:hPsi}.  Combining \eqref{adjoint:est:1} and \eqref{adjoint:est:2}, and applying Young's inequality and then Gronwall's inequality, we see that
\begin{equation}\label{adjoint:first:apriori} 
\begin{aligned}
& \norm{p_{n}(s)}_{L^{2}}^{2} + \norm{r_{n}(s)}_{L^{2}}^{2} + \norm{q_{n}}_{L^{2}(s,\Optime;L^{2})}^{2} + \norm{\nabla r_{n}}_{L^{2}(s,\Optime;L^{2})}^{2} \\
& \quad \leq C  \left ( \norm{ \beta_{Q}(\varphi - \varphi_{Q})}_{L^{2}(Q)}^{2} + \tfrac{1}{2r}  \norm{2 \beta_{\Omega}(\varphi - \varphi_{\Omega}) + \beta_{S}}_{L^{2}(Q)}^{2} \right ) \text{ for } s \in (0,\Optime),
\end{aligned}
\end{equation}
for some positive constant $C$ depending only on $\CC$, $\PP$, $\AA$, $\alpha$, $C_{*}$, $A$, $B$, and $T$.  This implies that $(p_{n}, q_{n}, r_{n})$ can be extended to the interval $[0,\Optime]$, and thus $t_{n} = 0$ for each $n \in \N$. 

\paragraph{Second estimate.} Viewing \eqref{adjoint:galerkin:q} as the weak formulation of an elliptic problem for $p_{n}$, and using that $q_{n}$ is bounded uniformly in $L^{2}(0,\Optime;L^{2})$, we have by elliptic regularity that
\begin{align*}
\norm{p_{n}}_{L^{2}(0,\Optime;H^{2})} \leq C \left ( \norm{q_{n}}_{L^{2}(0,\Optime;L^{2})} + \norm{p_{n}}_{L^{2}(0,\Optime;L^{2})} \right ),
\end{align*}
for some positive constant $C$ not depending on $n$.

\paragraph{Third estimate.}  Substituting $v = -\pd_{t}r_{n}$ in \eqref{adjoint:galerkin:r}, integrating over $[s,\Optime]$ for $s \in (0,\Optime)$ leads to
\begin{align*}
& \frac{1}{2} \left (\norm{\nabla r_{n}(s)}_{L^{2}}^{2} +\BB \norm{r_{n}(s)}_{L^{2}}^{2} \right ) + \norm{\pd_{t}r_{n}}_{L^{2}(s,\Optime;L^{2})}^{2} \\
& \quad \leq \CC \norm{r_{n}}_{L^{2}(s,\Optime;L^{2})} \norm{\pd_{t}r_{n}}_{L^{2}(s,\Optime;L^{2})} + \PP \norm{p_{n}}_{L^{2}(s,\Optime;L^{2})} \norm{\pd_{t}r_{n}}_{L^{2}(s,\Optime;L^{2})}.
\end{align*}
Thus, by \eqref{adjoint:first:apriori} we have that
\begin{align*}
\norm{\nabla r_{n}(s)}_{L^{2}}^{2} + \norm{\pd_{t}r_{n}}_{L^{2}(s,\Optime;L^{2})}^{2} \leq C \text{ for } s \in (0,\Optime),
\end{align*}
for some positive constant $C$ not depending on $n$.  Furthermore, viewing \eqref{adjoint:galerkin:r} as a weak formulation of an elliptic problem for $r_{n}$ and elliptic regularity yields that
\begin{align*}
\norm{r_{n}}_{L^{2}(0,\Optime;H^{2})} \leq C \left ( \norm{\pd_{t}r_{n}}_{L^{2}(0,\Optime;L^{2})} + \norm{r_{n}}_{L^{2}(0,\Optime;L^{2})} + \norm{p_{n}}_{L^{2}(0,\Optime;L^{2})} \right ),
\end{align*}
for some positive constant $C$ not depending on $n$.

\paragraph{Fourth estimate.}
Integrating \eqref{adjoint:galerkin:p} over $[0,\Optime]$ and integrate by parts, by H\"{o}lder's inequality we obtain that
\begin{align*}
& \abs{\int_{0}^{\Optime} \int_{\Omega} \pd_{t}p_{n} v \dx \dt } \leq B\norm{q_{n}}_{L^{2}(0,\Optime;L^{2})} \norm{\Laplace v}_{L^{2}(0,\Optime;L^{2})}\\
& \quad  + \left ( AC_{*} \norm{q_{n}}_{L^{2}(0,\Optime;L^{2})}  + \CC C_{*} \norm{r_{n}}_{L^{2}(0,\Optime;L^{2})} +  C_{*} C_{u} \norm{p_{n}}_{L^{2}(0,\Optime;L^{2})}\right ) \norm{v}_{L^{2}(0,\Optime;L^{2})} \\
& \quad + \left ( \norm{ \beta_{Q}(\varphi - \varphi_{Q})}_{L^{2}(Q)} + \tfrac{1}{2r}  \norm{2 \beta_{\Omega}(\varphi - \varphi_{\Omega}) + \beta_{S}}_{L^{2}(Q)} \right ) \norm{v}_{L^{2}(0,\Optime;L^{2})},
\end{align*}
which yields that $\{\pd_{t}p_{n}\}_{n \in \N}$ is bounded uniformly in $L^{2}(0,\Optime;(H^{2})^{*})$.  

It follows from the a priori estimates that we obtain a relabelled subsequence $(p_{n}, q_{n}, r_{n})$ such that
\begin{alignat*}{3}
p_{n} & \to p \text{ weakly*} && \text{ in } L^{2}(0,\Optime;H^{2}) \cap H^{1}(0,\Optime;(H^{2})^{*}) \cap L^{\infty}(0,\Optime;L^{2}), \\
q_{n} & \to q \text{ weakly } && \text{ in }  L^{2}(0,\Optime;L^{2}), \\
r_{n} & \to r \text{ weakly* } && \text{ in }  L^{\infty}(0,\Optime;H^{1}) \cap H^{1}(0,\Optime;L^{2}) \cap L^{2}(0,\Optime;H^{2}),
\end{alignat*}
and by standard arguments the triplet $(p,q,r)$ satisfies \eqref{adjoint:weak} and is a solution to the adjoint system \eqref{eq:adjoint}.

\paragraph{Uniqueness.} Let $p := p_{1} - p_{2}$, $q := q_{1} - q_{2}$ and $r := r_{1} - r_{2}$ denote the difference between two solutions to the adjoint system \eqref{eq:adjoint}.  Then, it holds that
\begin{subequations}\label{adjoint:weak:difference}
\begin{align}
\label{adjoint:diff:p} 0 & = \inner{-\pd_{t}p}{\zeta}_{H^{2}} + \int_{\Omega} B q \Laplace \zeta - A \Psi''(\varphi) q \zeta +  h'(\varphi) \left ( \CC \sigma r - (\PP \sigma - \AA - \alpha u)p \right ) \zeta \dx, \\
\label{adjoint:diff:q} 0 & = \int_{\Omega} q \eta - \eta \Laplace p  \dx, \\
\label{adjoint:diff:r} 0 & = \int_{\Omega} - \pd_{t}r \eta + \nabla r \cdot \nabla \eta + \BB r \eta + \CC h(\varphi) r \eta - \PP h(\varphi) p \eta \dx
\end{align}
\end{subequations}
for a.e. $t \in (0,\Optime)$ and for all $\eta \in H^{1}$ and $\zeta \in H^{2}$, with $p(\Optime) = r(\Optime) = 0$.  Substituting $\zeta = p \in L^{2}(0,T;H^{2})$ in \eqref{adjoint:diff:p} and integrate by parts, substituting $\eta = Bq$ in \eqref{adjoint:diff:q} and $\eta = r$ in \eqref{adjoint:diff:r}, summing and then integrate over $[s,\Optime]$ for $s \in (0,\Optime)$ leads to
\begin{align*}
& \frac{1}{2} \left ( \norm{p(s)}_{L^{2}}^{2} + \norm{r(s)}_{L^{2}}^{2} \right ) + B \norm{q}_{L^{2}(s,\Optime;L^{2})}^{2} + \norm{\nabla r}_{L^{2}(s,\Optime;L^{2})}^{2} \\
& \quad \leq A C_{*} \norm{q}_{L^{2}(s,\Optime;L^{2})} \norm{p}_{L^{2}(s,\Optime;L^{2})} + \CC C_{*} \norm{r}_{L^{2}(s,\Optime;L^{2})} \norm{p}_{L^{2}(s,\Optime;L^{2})} \\
& \quad + C_{*} (\PP + \AA + \alpha) \norm{p}_{L^{2}(s,\Optime;L^{2})}^{2} + \PP \norm{p}_{L^{2}(s,\Optime;L^{2})} \norm{r}_{L^{2}(s,\Optime;L^{2})},
\end{align*}
where we neglected the nonnegative term $\BB \abs{r}^{2} + \CC h(\varphi) \abs{r}^{2}$ and used that $h(\varphi) \leq 1$, $\sigma \leq 1$, $u \leq 1$ a.e. in $Q$, and \eqref{BoundingConst:hPsi}.  Estimating the right-hand side with Young's inequality and a Gronwall argument shows that
\begin{align*}
\norm{p(s)}_{L^{2}}^{2} + \norm{r(s)}_{L^{2}}^{2} + \norm{q}_{L^{2}(s,\Optime;L^{2})}^{2} + \norm{\nabla r}_{L^{2}(s,\Optime;L^{2})}^{2} \leq 0 \text{ for all } s \in (0,\Optime),
\end{align*}
which yields that $p = q = r = 0$.

\subsection{Simplification of the first order necessary optimality condition for the control}
Let $(u_{*},\Optime)$ denote the minimizer of \eqref{OCProblem} from Theorem \ref{thm:minimizer}, with corresponding state variables $(\varphi_{*},\mu_{*}, \sigma_{*}) = \Soln(u_{*})$ and adjoint variables $(p,q,r)$ associated to $(\varphi_{*}, \mu_{*}, \sigma_{*})$.  For any $u \in \mathcal{U}_{\mathrm{ad}}$, let $w := u - u_{*} \in L^{2}(Q)$ and let $(\Phi, \Xi, \Sigma)$ denote the linearized state variables associated to $w$.  Then, from \eqref{Fdiff:J:wrt:u}, the optimal control $u_{*}$ satisfies the following first order necessary optimality condition,
\begin{equation}\label{FONC:wrt:u}
\begin{aligned}
& \left (\der_{u} \mathcal{J}(u_{*},\Optime) \right ) (u - u_{*}) = \left (\der_{u} \mathcal{J}(u_{*},\Optime) \right ) w \\
& \quad = \beta_{Q} \int_{0}^{\Optime} \int_{\Omega} (\varphi_{*} - \varphi_{Q}) \Phi \dx \dt + \frac{\beta_{\Omega}}{r} \int_{\Optime-r}^{\Optime} \int_{\Omega} (\varphi_{*} - \varphi_{\Omega}) \Phi \dx \dt\\
& \quad + \frac{\beta_{S}}{2r} \int_{\Optime-r}^{\Optime} \int_{\Omega} \Phi \dx \dt + \beta_{u} \int_{0}^{T} \int_{\Omega} u_{*} (u - u_{*}) \dx \dt \geq 0.
\end{aligned}
\end{equation}
Substituting $\zeta = \Phi$ in \eqref{adjoint:weak:p}, $\eta = \Xi$ in \eqref{adjoint:weak:q} and $\eta = \Sigma$ in \eqref{adjoint:weak:r}, integrate over $[0,\Optime]$ leads to
\begin{subequations}
\begin{align}
\label{adjoint:p:Phi} 0 & = \int_{0}^{\Optime} \left ( \inner{-\pd_{t}p}{\Phi}_{H^{2}} + \int_{\Omega} B q \Laplace \Phi - A \Psi''(\varphi_{*})q \Phi + \CC h'(\varphi_{*}) \sigma_{*} r \Phi \dx \right ) \dt \\
\notag & - \int_{0}^{\Optime} \int_{\Omega} h'(\varphi_{*}) (\PP \sigma_{*} - \AA - \alpha u_{*}) p \Phi + \beta_{Q}(\varphi_{*} - \varphi_{Q}) \Phi \dx \dt \\
\notag & - \int_{\Optime-r}^{\Optime} \int_{\Omega} \tfrac{1}{2r} \left ( 2 \beta_{\Omega} (\varphi_{*} - \varphi_{\Omega}) + \beta_{S} \right ) \Phi \dx \dt, \\
\label{adjoint:q:Xi}0 & = \int_{0}^{\Optime} \int_{\Omega} q \Xi + \nabla p \cdot \nabla \Xi \dx \dt, \\
\label{adjoint:r:Sigma} 0 & = \int_{0}^{\Optime} \int_{\Omega} - \pd_{t} r \Sigma + \nabla r \cdot \nabla \Sigma + \BB r \Sigma + \CC h(\varphi_{*}) r \Sigma - \PP h(\varphi_{*}) p \Sigma \dx \dt.
\end{align}
\end{subequations}
Meanwhile, substituting $\zeta = p$ in \eqref{weak:Linearized:varphi}, $\zeta = q$ in \eqref{weak:Linearized:mu} and $\zeta = r$ in \eqref{weak:Linearized:sigma}, integrate over $[0,\Optime]$ leads to
\begin{subequations}
\begin{align}
\label{Linearized:Phi:p} 0 & = \int_{0}^{\Optime} \left ( \inner{\pd_{t}\Phi}{p}_{H^{1}} + \int_{\Omega} \nabla \Xi \cdot \nabla p \dx \right ) \dt \\
\notag & - \int_{0}^{\Optime} \int_{\Omega} h(\varphi_{*})(\PP \Sigma - \alpha (u - u_{*}))p + h'(\varphi_{*})(\PP \sigma_{*} - \AA - \alpha u_{*}) \Phi) p \dx  \dt, \\
\label{Linearized:Xi:q} 0 & = \int_{0}^{\Optime} \int_{\Omega} q \Xi - A \Psi''(\varphi_{*}) q \Phi + B q \Laplace \Phi \dx \dt, \\
\label{Linearized:Sigma:r} 0 & = \int_{0}^{\Optime} \int_{\Omega} \pd_{t} \Sigma r + \nabla r \cdot \nabla \Sigma + \BB \Sigma r + \CC h(\varphi_{*}) \Sigma r + \CC h'(\varphi_{*}) \Phi \sigma_{*} r \dx \dt.
\end{align}
\end{subequations}
Using that $r(\Optime) = 0$, $p(\Optime) = 0$, $\Sigma(0) = 0$, $\Phi(0) = 0$, $\pd_{t} \Phi \in L^{2}(0,T;(H^{1})^{*})$ and $p \in L^{2}(0,T;H^{2})$, we have that 
\begin{subequations}
\begin{align}
\label{Timederivative:pPhi} \int_{0}^{\Optime} \inner{-\pd_{t}p}{\Phi}_{H^{2}} \dt & = \int_{0}^{\Optime} \inner{p}{\pd_{t}\Phi}_{H^{2}} \dt = \int_{0}^{\Optime} \inner{p}{\pd_{t}\Phi}_{H^{1}} \dt,  \\
\label{Timederivative:rSigma} \int_{0}^{\Optime} \int_{\Omega} \pd_{t}r \Sigma \dx \dt & = -\int_{0}^{\Optime} \int_{\Omega} \pd_{t}\Sigma r \dx \dt.
\end{align}
\end{subequations}
Substituting \eqref{Timederivative:rSigma} into \eqref{adjoint:r:Sigma} and comparing with \eqref{Linearized:Sigma:r} leads to
\begin{align}\label{FONC:sigma:r:relation}
\int_{0}^{\Optime} \int_{\Omega} \CC h'(\varphi_{*}) \sigma_{*} \Phi r + \PP h(\varphi_{*})p \Sigma \dx \dt = 0.
\end{align}
Meanwhile, substituting \eqref{Timederivative:pPhi} and \eqref{Linearized:Xi:q} into \eqref{adjoint:p:Phi}, and using \eqref{adjoint:q:Xi} and \eqref{FONC:sigma:r:relation} leads to
\begin{align*}
0 & = \int_{0}^{\Optime} \left ( 
\inner{p}{\pd_{t}\Phi}_{H^{1}} + \int_{\Omega} \nabla p \cdot \nabla \Xi - \PP h(\varphi_{*}) p \Sigma - h'(\varphi_{*}) (\PP \sigma_{*} - \AA - \alpha u_{*}) p \Phi \dx \right ) \dt \\
& - \int_{0}^{\Optime} \int_{\Omega}  \beta_{Q}(\varphi_{*} - \varphi_{Q})  \dx \dt - \int_{\Optime-r}^{\Optime} \int_{\Omega} \tfrac{1}{2r} \left ( 2 \beta_{\Omega} (\varphi_{*} - \varphi_{\Omega}) + \beta_{S} \right ) \Phi \dx \dt.
\end{align*}
Comparing the above equality with \eqref{Linearized:Phi:p} we obtain
\begin{align*}
& \int_{0}^{\Optime} \int_{\Omega} \beta_{Q}(\varphi_{*} - \varphi_{Q})\Phi \dx \dt + \int_{\Optime-r}^{\Optime} \int_{\Omega} \tfrac{1}{2r} \left ( 2 \beta_{\Omega} (\varphi_{*} - \varphi_{\Omega}) + \beta_{S} \right ) \Phi \dx \dt \\
& \quad  = - \int_{0}^{\Optime} \int_{\Omega} h(\varphi_{*}) \alpha p (u - u_{*}) \dx \dt, 
\end{align*}
and upon substituting this into \eqref{FONC:wrt:u} leads to \eqref{FONC:u}.

\bibliographystyle{plain}
\bibliography{OCtumour}

\begin{thebibliography}{10}

\bibitem{book:AdamsFournier}
R.A. Adams and J.J.F. Fournier.
\newblock {\em {Sobolev spaces}}.
\newblock Pure and applied mathematics. Elsevier, New York, second edition,
  2003.

\bibitem{article:AradaRaymond}
N.~Arada and J.P. Raymond.
\newblock {Time optimal problems with Dirichlet boundary controls}.
\newblock {\em Discrete Contin. Dyn. Syst.}, 9:1549--1570, 2003.

\bibitem{preprint:BosiaContiGrasselli14}
S.~Bosia, M.~Conti, and M.~Grasselli.
\newblock On the {C}ahn--{H}illiard--{B}rinkman system.
\newblock {\em Commun. Math. Sci.}, 13(6):1541--1567, 2015.

\bibitem{book:Coddington}
E.A. Coddington and N.~Levinson.
\newblock {\em {Theory of Ordinary Differential Equations}}.
\newblock International series in pure and applied mathematics. Tata
  McGraw-Hill, New York, 1955.

\bibitem{article:ColliFarshbafGilardiSprekels}
P.~Colli, M.H. Farshbaf-Shaker, G.~Gilardi, and J.~Sprekels.
\newblock {Optimal boundary control of a viscous Cahn--Hilliard system with
  dynamic boundary condition and double obstacle potentials}.
\newblock {\em SIAM J. Control Optim.}, 53(4):2696--2721, 2015.

\bibitem{article:ColliFarshbafGilardiSprekels2ndorder}
P.~Colli, M.H. Farshbaf-Shaker, G.~Gilardi, and J.~Sprekels.
\newblock {Second-order analysis of a boundary control problem for the viscous
  Cahn--Hilliard equation with dynamic boundary conditions}.
\newblock {\em Ann. Acad. Rom. Sci. Math. Appl.}, 7:41--66, 2015.

\bibitem{article:ColliGilardiHilhorst15}
P.~Colli, G.~Gilardi, and D.~Hilhorst.
\newblock On a {C}ahn--{H}illiard type phase field model related to tumor
  growth.
\newblock {\em Discrete Contin. Dyn. Syst.}, 35(6):2423--2442, 2015.

\bibitem{article:ColliGilardiRoccaSprekels1}
P.~Colli, G.~Gilardi, E.~Rocca, and J.~Sprekels.
\newblock {Vanishing viscosities and error estimate for a Cahn--Hilliard type
  phase field system related to tumor growth}.
\newblock {\em Nonlinear Anal. Real World Appl.}, 26:93--108, 2015.

\bibitem{article:ColliGilardiRoccaSprekels2}
P.~Colli, G.~Gilardi, E.~Rocca, and J.~Sprekels.
\newblock {Asymptotic analyses and error estimates for a Cahn--Hilliard type
  phase field system modelling tumor growth}.
\newblock {\em to appear in Discrete Contin. Dyn. Syst. Ser. S}, 2016.
\newblock arXiv:1503.00927.

\bibitem{article:CGRS}
P.~Colli, G.~Gilardi, E.~Rocca, and J.~Sprekels.
\newblock {Optimal distributed control of a diffuse interface model of tumor
  growth}.
\newblock arXiv:1601.04567, 2016.

\bibitem{article:ColliGilardiSprekels}
P.~Colli, G.~Gilardi, and J.~Sprekels.
\newblock {A boundary control problem for the pure Cahn--Hilliard equation with
  dynamic boundary conditions}.
\newblock {\em Adv. Nonlinear Anal.}, 4:311--325, 2015.

\bibitem{article:ColliGilardiSprekelsViscous}
P.~Colli, G.~Gilardi, and J.~Sprekels.
\newblock {A boundary control problem for the viscous Cahn--Hilliard equation
  with dynamic boundary conditions}.
\newblock {\em Appl. Math. Optim.}, 73(2):195--225, 2016.

\bibitem{article:CristiniLiLowengrubWise09}
V.~Cristini, X.~Li, J.S. Lowengrub, and S.M. Wise.
\newblock Nonlinear simulations of solid tumor growth using a mixture model:
  invasion and branching.
\newblock {\em J. Math. Biol.}, 58:723--763, 2009.

\bibitem{book:CristiniLowengrub}
V.~Cristini and J.~Lowengrub.
\newblock {\em {Multiscale Modeling of Cancer: An Integrated Experimental and
  Mathematical Modeling Approach}}.
\newblock Cambridge University Press, 2010.

\bibitem{article:DFRSS}
M.~Dai, E.~Feireisl, E.~Rocca, G.~Schimperna, and M.~Schonbek.
\newblock {Analysis of a diffuse interface model of multispecies tumor growth}.
\newblock arXiv:1507.07683, 2015.

\bibitem{book:Friedman}
A.~Friedman.
\newblock {\em {Partial Differential Equations}}.
\newblock Holt, Rinehart and Winston, New York, 1969.

\bibitem{article:FrigeriGrasselliRocca15}
S.~Frigeri, M.~Grasselli, and E.~Rocca.
\newblock On a diffuse interface model of tumor growth.
\newblock {\em European J. Appl. Math.}, 26:215--243, 2015.

\bibitem{article:FrigeriRoccaSprekels}
S.~Frigeri, E.~Rocca, and J.~Sprekels.
\newblock {Optimal distributed control of a nonlocal
  Cahn--Hilliard/Navier--Stokes system in two dimensions}.
\newblock {\em SIAM J. Control Optim.}, 54(1):221--250, 2016.

\bibitem{article:GarckeLamCHND}
H.~Garcke and K.F. Lam.
\newblock {Global weak solutions and asymptotic limits of a
  Cahn--Hilliard--Darcy system modelling tumour growth}.
\newblock in preparation.

\bibitem{article:GarckeLamNeumann}
H.~Garcke and K.F. Lam.
\newblock {Well-posedness of a Cahn--Hilliard system modelling tumour growth
  with chemotaxis and active transport}.
\newblock {\em To appear in European J. Appl. Math.}, 2015.
\newblock arXiv:1511.06143.

\bibitem{article:GarckeLamDirichlet}
H.~Garcke and K.F. Lam.
\newblock {Analysis of a Cahn--Hilliard system with non zero Dirichlet
  conditions modelling tumour growth with chemotaxis}.
\newblock arXiv:1604.00287, 2016.

\bibitem{article:GarckeLamSitkaStyles}
H.~Garcke, K.F. Lam, E.~Sitka, and V.~Styles.
\newblock {A Cahn--Hilliard--Darcy model for tumour growth with chemotaxis and
  active transport}.
\newblock {\em Math. Models Methods Appl. Sci.}, 26(6):1095--1148, 2016.

\bibitem{book:Grisvard}
P.~Grisvard.
\newblock {\em {Elliptic Problems on Nonsmooth Domains}}, volume Monographs and
  Studies in Mathematics, vol. 24.
\newblock Pitman, Boston, 1985.

\bibitem{article:HartlSethi}
R.F. Hartl and S.P. Sethi.
\newblock {A note on the free terminal time transversality condition}.
\newblock {\em Zeitschrift f{\"u}r Operations Research}, 27:203--208, 1983.

\bibitem{article:HawkinsPrudhommevanderZeeOden13}
A.~Hawkins-Daarud, S.~Prudhomme, K.G. van~der Zee, and J.T. Oden.
\newblock Bayesian calibration, validation, and uncertainty quantification of
  diffuse interface models of tumor growth.
\newblock {\em J. Math. Biol.}, 67:1457--1485, 2013.

\bibitem{article:HawkinsZeeOden12}
A.~Hawkins-Daarud, K.G. van~der Zee, and J.T. Oden.
\newblock Numerical simulation of a thermodynamically consistent four-species
  tumor growth model.
\newblock {\em Int. J. Numer. Methods Biomed. Eng.}, 28:3--24, 2012.

\bibitem{article:HintermullerKeilWegner}
M.~Hinterm\"{u}ller, T.~Keil, and D.~Wegner.
\newblock {Optimal control of a semidiscrete Cahn--Hilliard--Navier--Stokes
  system with non-matched fluid densities}.
\newblock arXiv:1506.03591, 2015.

\bibitem{article:HintermullerWegner12}
M.~Hinterm\"{u}ller and D.~Wegner.
\newblock {Distributed optimal control of the Cahn--Hilliard system including
  the case of a double-obstacle homogeneous free energy density}.
\newblock {\em SIAM J. Control Optim.}, 50(1):388--418, 2012.

\bibitem{article:HintermullerWegnerGL}
M.~Hinterm\"{u}ller and D.~Wegner.
\newblock {Distributed and boundary control problems for the semidiscrete
  Cahn--Hilliard/Navier--Stokes system with nonsmooth Ginzburg--Landau
  energies}.
\newblock {\em Preprint, Humboldt-Universit\"{a}t zu Berlin}, 2014.

\bibitem{article:HintermullerWegnerCHNS}
M.~Hinterm\"{u}ller and D.~Wegner.
\newblock {Optimal control of a semidiscrete Cahn--Hilliard--Navier--Stokes
  system}.
\newblock {\em SIAM J. Control Optim.}, 52(1):747--772, 2014.

\bibitem{book:Hinze}
M.~Hinze, R.~Pinnau, M.~Ulbrich, and S.~Ulbrich.
\newblock {\em {Optimization with PDE constraints}}.
\newblock Mathematical Modelling: Theory and Applications. Springer
  Netherlands, 2009.

\bibitem{article:JangKwonLee}
T.~Jang, H-D. Kwon, and J.~Lee.
\newblock {Free terminal time optimal control problem of an HIV model based on
  a conjugate gradient method}.
\newblock {\em Bull. Math. Biol.}, 73:2408--2429, 2011.

\bibitem{preprint:JiangWuZheng14}
J.~Jiang, H.~Wu, and S.~Zheng.
\newblock Well-posedness and long-time behavior of a non-autonomous
  {C}ahn--{H}illiard--{D}arcy system with mass source modeling tumor growth.
\newblock {\em J. Differential Equ.}, 259(7):3032--3077, 2015.

\bibitem{book:LenhartWorkman}
S.~Lenhart and J.T. Workman.
\newblock {\em {Optimal Control Applied to Biological Models}}.
\newblock Mathematical and Computational Biology. Chapman \& Hall/CRC London,
  2007.

\bibitem{article:LowengrubTitiZhao13}
J.S. Lowengrub, E.~Titi, and K.~Zhao.
\newblock Analysis of a mixture model of tumor growth.
\newblock {\em European J. Appl. Math.}, 24:691--734, 2013.

\bibitem{article:OdenHawkinsPrudhomme10}
J.T. Oden, A.~Hawkins, and S.~Prudhomme.
\newblock General diffuse-interface theories and an approach to predictive
  tumor growth modeling.
\newblock {\em Math. Models Methods Appl. Sci.}, 58:723--763, 2010.

\bibitem{article:PalankiKravarisWang}
S.~Palanki, C.~Kravaris, and H.Y. Wang.
\newblock {Optimal feedback control of batch reactors with a state inequality
  constraint and free terminal time}.
\newblock {\em Chem. Engng. Sci.}, 49(1):85--97, 1994.

\bibitem{article:RaymondZidani}
J.P. Raymond and H.~Zidani.
\newblock {Pontryagin's Principle for Time-Optimal Problems}.
\newblock {\em J. Optim. Theory Appl.}, 101(2):375--402, 1999.

\bibitem{article:RaymondZidani2}
J.P. Raymond and H.~Zidani.
\newblock {Time optimal problems with boundary controls}.
\newblock {\em Differential Integral Equations}, 13(7-9):1039--1072, 2000.

\bibitem{article:RoccaSprekels}
E.~Rocca and J.~Sprekels.
\newblock {Optimal distributed control of a nonlocal convective Cahn--Hilliard
  equation by the velocity in three dimensions}.
\newblock {\em SIAM J. Control Optim.}, 53(3):1654--1680, 2015.

\bibitem{book:Roubieck}
T.~Roub\'{i}\v{c}ek.
\newblock {\em {Nonlinear Partial Differential Equations with Applications}},
  volume 153 of {\em International Series of Numerical Mathematics}.
\newblock Birkh\"{a}user Verlag, Basel Boston Berlin, 2005.

\bibitem{article:Simon86}
J.~Simon.
\newblock {Compact sets in space $L^{p}(0,T;B)$}.
\newblock {\em Ann. Mat. Pura Appl.}, 146(1):65--96, 1986.

\bibitem{book:Troeltzsch}
F.~Tr{\"o}ltzsch.
\newblock {\em {Optimal Control of Partial Differential Equations: Theory,
  Methods, and Applications}}, volume 112 of {\em Graduate studies in
  mathematics}.
\newblock American Mathematical Society, Providence, Rhode Island, 2010.

\bibitem{article:WiseLowengrubFrieboesCristini08}
S.M. Wise, J.S. Lowengrub, H.B. Frieboes, and V.~Cristini.
\newblock Three-dimensional multispecies nonlinear tumor growth - {I}: model
  and numerical method.
\newblock {\em J. Theoret. Biol.}, 253(3):524--543, 2008.

\bibitem{article:ZhaoDuan6thorderCH}
X.~Zhao and N.~Duan.
\newblock {Optimal control of the sixth-order convective Cahn--Hilliard
  equation}.
\newblock {\em Bound. Value Probl.}, 2014:206--222, 2014.

\bibitem{article:ZhaoLiuViscous1D}
X.~Zhao and C.~Liu.
\newblock {Optimal control problem for viscous Cahn--Hilliard equation}.
\newblock {\em Nonlinear Anal.}, 74:6348--6357, 2011.

\bibitem{article:ZhaoLiuConvec1D}
X.~Zhao and C.~Liu.
\newblock {Optimal control of the convective Cahn--Hilliard equation}.
\newblock {\em Appl. Anal.}, 92(5):1028--1045, 2013.

\bibitem{article:ZhaoLiuConvec2D}
X.~Zhao and C.~Liu.
\newblock {Optimal control for the convective Cahn--Hilliard equation in 2D
  case}.
\newblock {\em Appl. Math. Optim.}, 70:61--82, 2014.

\end{thebibliography}

\end{document}